\documentclass{amsart}

\usepackage{amsmath}
\usepackage{amssymb}
\usepackage{amsthm}
\usepackage{esint}
\usepackage[inline]{enumitem}
\usepackage{hyperref}
\hypersetup{hidelinks}

\theoremstyle{plain}
\newtheorem{theorem}{Theorem}[section]
\newtheorem{corollary}[theorem]{Corollary}
\newtheorem{lemma}[theorem]{Lemma}
\newtheorem{proposition}[theorem]{Proposition}

\theoremstyle{definition}
\newtheorem{definition}[theorem]{Definition}
\newtheorem{remark}[theorem]{Remark}

\theoremstyle{remark}

\numberwithin{equation}{section}

\newcommand{\R}{\mathbb{R}}

\newcommand{\trace}{\mathrm{tr}}
\newcommand{\dist}{\mathrm{dist}}
\newcommand{\diam}{\mathrm{diam}}

\newcommand{\cl}{\overline}
\newcommand{\loc}{\mathrm{loc}}

\DeclareMathOperator*{\divergence}{div}
\newcommand{\laplacian}{\Delta}

\DeclareMathOperator*{\osc}{osc}
\DeclareMathOperator*{\spt}{supp}
\newcommand{\capacity}{\mathrm{cap}}

\newcommand{\trinorm}[1]
{{
    \left\vert\kern-0.20ex\left\vert\kern-0.20ex\left\vert
    #1
    \right\vert\kern-0.20ex\right\vert\kern-0.20ex\right\vert
}}

\begin{document}

\title[Weighted and nonlocal Poisson equations]{Weighted and nonlocal Poisson equations on domains with capacity density conditions}
\author{Takanobu Hara}
\email{takanobu.hara.math@gmail.com}
\address{Mathematical Institute, Tohoku University,
6-3, Aramaki Aza-Aoba, Aoba-ku\\
Sendai, Miyagi 980-8578, Japan}
\date{20 October 2026}
\subjclass[2020]{35J92; 35R11; 31C45; 46E35}
\keywords{capacity density condition, measure data, boundary regularity,
fractional Laplacian, Hardy inequality}

\begin{abstract}
We establish necessary and sufficient conditions for global H\"older solvability of weighted Poisson equations and related fractional problems. On domains satisfying a capacity density condition, a nonnegative measure with bounded support depth admits a bounded globally H\"older continuous zero-boundary solution if and only if it satisfies an interior Morrey-type growth condition. Conversely, uniform boundary decay estimates for solutions with localized nonnegative sources characterize the capacity density condition. Analogous characterizations are established for the restricted and spectral fractional Laplacians. Our approach is based on weighted strong barriers and also yields two-weight Hardy inequalities, compact trace embeddings, and global H\"older solvability for nonlinear Dirichlet-to-Neumann operators.
\end{abstract}

\maketitle

\section{Introduction}
Let $\Omega \subset \R^{n}$, $n \ge 2$, be open with nonempty boundary, and let $1 < p < \infty$.
We consider
\begin{equation}\label{eqn:DE}
\begin{cases}
- \laplacian_{p, w} u := -\divergence(w |\nabla u|^{p - 2}\nabla u) = \nu & \text{in} \ \Omega, \\
u = 0 & \text{on} \ \partial \Omega,
\end{cases}
\end{equation}
where $w$ is a $p$-admissible weight (defined below),
and the signed Radon measure $\nu$ is locally finite.

Equation \eqref{eqn:DE} has been extensively studied in the literature, particularly with regard to the existence, uniqueness, and regularity of solutions, among many other aspects. Since our focus here is specifically on weighted equations, we confine ourselves to citing a few representative works,
including \cite{FKS1982} and \cite{MR2305115}.
For a survey of recent developments concerning weights that are singular
at the boundary, see \cite{MR4306765}.

In this paper, we establish global H\"{o}lder solvability for \eqref{eqn:DE}
and Hardy inequalities under capacity density conditions.
These conditions quantify the thickness of the complement without requiring
boundary smoothness. For the Poisson problem, the growth condition on the
forcing measure is imposed only on interior balls, allowing the measure to
accumulate near the boundary. We follow Ancona's strong-barrier approach
\cite{MR856511}, which was developed for weighted equations in
\cite{Hara2024} and for unweighted equations with measure data in
\cite{HaraHolder}. Motivated by applications to nonlocal problems, we develop
the H\"{o}lder theory for weighted equations with measure data and introduce
a two-weight formulation of the Hardy inequality.

We first apply the weighted theory to fractional Hardy inequalities with general measure weights under fractional capacity-density conditions. 
Fractional Hardy inequalities for zero-extended functions under geometric and capacity assumptions were studied in \cite{ILTVHardy,IMV2024}, and their relation to positive supersolutions was established in \cite{BianchiBrascoSkZagati}. 
Weighted trace estimates \cite{KoskelaSotoWang} allow us to obtain compact trace embeddings and quantitative lower-order inequalities, also on unbounded domains.

In the second application, we establish global H\"{o}lder solvability for measure-data Poisson problems involving the restricted and spectral fractional Laplacians and characterize the corresponding capacity-density conditions through estimates for localized sources. 
The classical theory of Riesz potentials, balayage, and Green kernels is presented in \cite{Landkof1972}, while the potential theory of stable processes and their boundary behavior is surveyed in \cite{BogdanEtAl2009}. 
To apply our weighted PDE methods to these nonlocal problems, we use the extension formulations \cite{CS2007,ST2010}.
The present paper was inspired primarily by \cite{Bjorn2024}, which investigates the Wiener criterion for the nonlocal Laplace equation. 
For the Wiener criterion and, of particular relevance to the present work,
the solvability of the boundary value problem with H\"{o}lder continuous boundary data, we refer the reader to \cite{Li2025,KimLeeWeidner2026} and the references therein.

We adopt the extension-based framework for nonlinear Dirichlet-to-Neumann
operators developed in \cite{CKKSS2025,GS2025}.
In this framework, the operator is defined through the boundary flux of a
$p$-harmonic extension, equivalently through the first variation of its energy.
The associated Neumann theory, including H\"older regularity for measure data,
has been further developed in \cite{CGKS2026}.
We apply this framework to the zero-exterior-value Dirichlet problem, using
the weighted Poisson theory established in Section 3.

The paper is organized as follows. Section 2 collects the weighted Sobolev
and measure-data estimates used below. Section 3 develops the weighted
Poisson theory under capacity density conditions and derives trace estimates,
compact embeddings, and Hardy inequalities. Section 4 applies these results
to fractional Hardy and trace inequalities. Section 5 treats Poisson problems
for the restricted and spectral fractional Laplacians, including uniqueness
and necessary capacity density conditions. Section 6 considers nonlinear
Dirichlet-to-Neumann operators defined through weighted $p$-harmonic extensions.

\section{Preliminaries and standard estimates}

We collect the weighted Sobolev and regularity estimates used in Section 3.
For the homogeneous theory we refer to \cite[Chapters 3, 6 and 20]{MR2305115};
see also \cite{MR757718}. The measure-data versions below follow from the
same iterations using the trace estimate in Lemma \ref{lem:trace}.

\subsection{Weights, Sobolev spaces, and capacity}
The letters $C,c>0$ denote constants that may change from line to line;
their dependence is specified in the relevant statements.
We write $A\asymp B$ if $C^{-1}B\le A\le CB$ for such a constant $C\ge1$.
Let $1<p<\infty$ and let $w\in L^1_{\loc}(\R^n)$ be positive almost everywhere.
Write $dw=w\,dx$, $w(E)=\int_E dw$, and, for a ball $B=B(x,R)$,
\[
\fint_B f\,dw:=\frac{1}{w(B)}\int_B f\,dw,
\qquad f_{B,w}:=\fint_B f\,dw,
\qquad aB:=B(x,aR).
\]
Here $a>0$.
The weight $w$ is \emph{$p$-admissible} if there are $C_D,C_P>0$ and
$\kappa\ge1$ such that, for every ball $B=B(x,R)$ and
$v\in C_c^\infty(\R^n)$,
\begin{gather*}
w(2B)\le C_Dw(B),\\
\fint_B|v-v_{B,w}|\,dw
\le C_PR\left(\fint_{\kappa B}|\nabla v|^p\,dw\right)^{1/p}.
\end{gather*}
Hereafter the structural constants are $n,p,C_D,C_P,\kappa$.
These assumptions imply, for some $\chi>1$, the Sobolev inequality
\begin{equation}\label{eqn:sobolev}
\left(\fint_B|v|^{p\chi}\,dw\right)^{1/(p\chi)}
\le CR\left(\fint_B|\nabla v|^p\,dw\right)^{1/p},
\end{equation}
for all $v\in C_c^\infty(B)$,
where $C$ and $\chi$ depend only on the structural constants.

Let $\Omega$ be an open set in $\R^{n}$. 
The weighted Sobolev space $H^{1,p}(\Omega;w)$ is the completion of
the functions in $C^\infty(\Omega)$ with finite norm
$(\int_\Omega(|v|^p+|\nabla v|^p)\,dw)^{1/p}$;
the gradient is the $L^p(w)$ limit of the gradients of approximating functions.
Its subspace $H^{1,p}_0(\Omega;w)$ is the closure of $C_c^\infty(\Omega)$.
Membership in $H^{1,p}_{\loc}(\Omega;w)$ means membership in
$H^{1,p}(G;w)$ for every open $G$ compactly contained in $\Omega$.
For $w=1$, we also use the notation $W^{1,p}$ for these spaces.
A weak solution of $- \laplacian_{p,w} u = \nu$ in $\Omega$ belongs to $H^{1,p}_{\loc}(\Omega;w)$ and satisfies
\[
\int_{\Omega} |\nabla u|^{p-2}\nabla u\cdot\nabla\varphi\,dw
=\int_{\Omega} \varphi \, d \nu,
\]
for all $\varphi\in C_c^\infty(\Omega)$.
Supersolutions and subsolutions use $\ge$ and $\le$, respectively, for nonnegative test functions. 
Sobolev boundary values are understood using quasicontinuous representatives \cite[Chapter 4]{MR2305115}.
In particular,
\[
\sup_{\partial \Omega}u := \inf\{k\in\R:(u-k)_+\in H^{1,p}_0(\Omega;w)\},
\qquad \inf_{\partial \Omega}u := -\sup_{\partial \Omega}(-u).
\]
Suprema, infima, and oscillations in the estimates below are essential values with respect to $dw$; 
for continuous representatives they are ordinary pointwise values.

For a compact set $K \subset \Omega$, set
\[
\capacity_{p,w}(K, \Omega):=
\inf\left\{\int_{\Omega} |\nabla\varphi|^p\,dw:
\varphi\in C_{c}^{\infty}(\Omega),\ \varphi\ge1\text{ on }K\right\}.
\]
Let $\Gamma \subset \partial\Omega$ be nonempty and closed.
We say that the capacity density condition holds on $\Gamma$ if there exists $\gamma>0$ such that
\begin{equation}\label{eqn:cdc}
\frac{\capacity_{p,w}(\cl{B(\xi,R)}\setminus\Omega,B(\xi,2R))}
{\capacity_{p,w}(\cl{B(\xi,R)},B(\xi,2R))}
\ge \gamma
\end{equation}
for all $\xi \in \Gamma$ and $R > 0$.
When $\Gamma=\partial\Omega$, we simply say that $\Omega$ satisfies the capacity density condition or is uniformly $(p, w)$-fat.
We also define 
\[
\delta_{\Gamma}(x) := \dist(x, \Gamma) \quad \text{and} \quad \delta(x) := \dist(x, \partial \Omega).
\]

\subsection{Estimates for measure data}
We set
\[
0 \le \lambda < p,
\quad
\beta = \frac{p - \lambda}{p - 1}.
\]
For $0 < R_{0} \le \infty$, we consider the global Morrey condition
\begin{equation}\label{eqn:full}
|\nu|(B(x,r))\le M_{\mathrm g}r^{ - \lambda } w(B(x,r)),
\qquad x\in\R^n,\quad 0 < r \le  8 \kappa R_{0},
\end{equation}
where $M_{\mathrm g}$ is a nonnegative constant. 
If a nonnegative Radon measure $\nu$ satisfies this condition, then it is absolutely continuous with respect to relative $(p,w)$-capacity on bounded open sets,
and we can define the integral $\int u \, d \nu$ for nonnegative $(p, w)$-quasicontinuous functions.

The following Adams-type inequality gives a scale-dependent estimate under
the measure growth condition \eqref{eqn:full}; compare
\cite[Corollary 1.95]{MR1461542} for the unweighted case and
\cite[Section 1.4.1]{Mazya2011} for the classical background.

\begin{lemma}\label{lem:trace}
Let $\nu$ be a nonnegative Radon measure satisfying \eqref{eqn:full}.
Then
\begin{equation}\label{eqn:trace}
\int_{\R^n}|v|^p\,d \nu
\le C M_{\mathrm g} \left(
r^{p - \lambda}\int_{\R^{n}} |\nabla v|^p \,dw
+
r^{- \lambda} \int_{\R^{n}} |v|^p \,dw
\right)
\end{equation}
for all $v\in C_c^\infty(\R^n)$ and $0 < r \le R_{0}$, where $C$ is a constant depending only on the structural constants and $\lambda$.
\end{lemma}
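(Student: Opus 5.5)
We localize to a covering by balls of radius $r$, prove a local trace inequality on each ball using the Poincaré/Sobolev inequality of $w$, and sum using bounded overlap.

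\emph{Covering.} Fix $0<r\le R_0$ and $v\in C_c^\infty(\R^n)$. Choose a maximal $r$-separated set $\{x_i\}\subset\R^n$ and put $B_i=B(x_i,r)$. The balls $B_i$ cover $\R^n$. By the doubling property of $w$, the dilated balls $2\kappa B_i$ have overlap bounded by a constant depending only on $C_D$ and $\kappa$; this is the standard doubling-measure counting argument.

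\emph{Local inequality.} It suffices to prove, for each $i$,
\[
\int_{B_i}|v|^p\,d\nu\le CM_{\mathrm g}\Big(r^{p-\lambda}\int_{2\kappa B_i}|\nabla v|^p\,dw+r^{-\lambda}\int_{2\kappa B_i}|v|^p\,dw\Big),
\]
since summing over $i$ with bounded overlap gives \eqref{eqn:trace}.

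To prove it, set $B=B_i$ and write $v=(v-v_{B,w})+v_{B,w}$.

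The constant part is easy. By Jensen's inequality and \eqref{eqn:full} at radius $r$,
\[
\nu(B)\,|v_{B,w}|^p\le M_{\mathrm g}r^{-\lambda}w(B)\fint_B|v|^p\,dw=M_{\mathrm g}r^{-\lambda}\int_B|v|^p\,dw .
\]

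\emph{Oscillating part (main obstacle).} We must bound $\int_B|v-v_{B,w}|^p\,d\nu$ using only the Morrey growth $\lambda<p$. I would use a Hedberg/Riesz-potential pointwise estimate. The weighted Poincaré inequality together with doubling (a telescoping argument over balls $B(x,2^{-k}r)$) gives, for $\nu$-a.e.\ $x\in B$, i.e.\ at Lebesgue points, which suffices for smooth $v$,
\[
|v(x)-v_{B,w}|\le C\int_0^{2r}\Big(\fint_{B(x,t)}|\nabla v|^p\,dw\Big)^{1/p}dt
\]
(with $\kappa$-dilates absorbed into $2\kappa B$). Equivalently,
\[
|v(x)-v_{B,w}|\le C\,\mathcal{I}g(x),\qquad \mathcal{I}g(x)=\int_{2\kappa B}\frac{d(x,y)\,g(y)}{w(B(x,d(x,y)))}\,dw(y),
\]
with $g=|\nabla v|\chi_{2\kappa B}$, via a dyadic Hölder estimate.

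Then I would prove $\int_B(\mathcal{I}g)^p\,d\nu\le CM_{\mathrm g}r^{p-\lambda}\int g^p\,dw$, the weighted Adams trace inequality on a ball. The plan here is Hedberg-type splitting of the integral at scales $\le t$ and $>t$. Hölder's inequality, the doubling-induced lower dimension bound, and the growth $\nu(B(x,s))\le M s^{-\lambda}w(B(x,s))$ with $\lambda<p$ make the dual estimate summable: the gain $s^{p-\lambda}$ dominates as $s\le 2r$, producing the factor $r^{p-\lambda}$.

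A cleaner alternative I would try first is duality. The bound $\|\mathcal{I}g\|_{L^p(\nu)}$ reduces to bounding $\mathcal{I}^*(f\nu)$ in $L^{p'}(w)$. By a Schur-test argument with kernel $K(x,y)=d/w(B(x,d))$, one computes
\[
\int K(x,y)\,d\nu(y)\le\sum_k 2^{-k}r\,\frac{\nu(B(x,2^{-k}r))}{w(B(x,2^{-k}r))}\le CM_{\mathrm g}r^{1-\lambda},
\]
using $\lambda<p$; since we need power $p$, one uses the Schur test with test function $d^{\epsilon}$ to split exponents as $r^{(p-\lambda)/p}$. Similarly, $\int_{2\kappa B}K(x,y)\,dw(x)\le Cr$. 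Schur's test with weights $\nu$ and $w$ then gives the operator norm
\[
\|\mathcal{I}\|_{L^p(w)\to L^p(\nu)}\le C\,(M_{\mathrm g}r^{1-\lambda})^{1/p}\,r^{1/p'} .
\]
This yields $r^{p-\lambda}$ only if $\lambda\le1$. For $1<\lambda<p$, genuine Adams/Hedberg interpolation (Maz'ya's capacitary argument) is needed, and this is the delicate step.
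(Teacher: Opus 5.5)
There is a genuine gap. The oscillating part, which is the whole content of the lemma, is not established for $1\le\lambda<p$. Your Schur-test bound closes only when $\sum_k(2^{-k}r)^{1-\lambda}$ converges, i.e.\ for $\lambda<1$. For the rest of the range you defer to Adams/Hedberg or capacitary arguments without carrying them out.

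None of that machinery is needed. The missing idea is to keep the $L^p$-averages produced by the $(1,p)$-Poincar\'e inequality and apply Minkowski's inequality in $L^p(\nu)$ to the dyadic telescoping sum, one scale at a time. With $\rho_j=2^{-j}r$ and $g=|\nabla v|$, doubling and Poincar\'e bound the $j$-th difference $|v_{B(x,\rho_j/2),w}-v_{B(x,\rho_j),w}|$ by $C\rho_j(\fint_{B(x,\kappa\rho_j)}g^p\,dw)^{1/p}$. Since $w(B(y,\kappa\rho_j))\le C_Dw(B(x,\kappa\rho_j))$ for $|x-y|<\kappa\rho_j$, Tonelli's theorem and \eqref{eqn:full} give
\[
\int_{\R^n}\fint_{B(x,\kappa\rho_j)}g^p\,dw\,d\nu(x)
\le C\int_{\R^n}g(y)^p\,\frac{\nu(B(y,\kappa\rho_j))}{w(B(y,\kappa\rho_j))}\,dw(y)
\le CM_{\mathrm g}\rho_j^{-\lambda}\int_{\R^n}g^p\,dw .
\]
So the $j$-th term has $L^p(\nu)$-norm at most $CM_{\mathrm g}^{1/p}\rho_j^{1-\lambda/p}\|g\|_{L^p(w)}$, and $\sum_j\rho_j^{1-\lambda/p}\le Cr^{1-\lambda/p}$ precisely because $\lambda<p$. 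The same Tonelli step applied to $|v_{B(x,r),w}|^p\le\fint_{B(x,r)}|v|^p\,dw$ gives the lower-order term.

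This is the paper's proof. Your covering is harmless but unnecessary, since the initial average at radius $r$ already localizes. Schur's test loses because it integrates the kernel itself against $\nu$, so each scale contributes only $\rho\cdot\rho^{-\lambda}$. Minkowski across scales, followed by Jensen and Fubini at a single scale, lets each scale contribute $(\rho^{p}\rho^{-\lambda})^{1/p}=\rho^{1-\lambda/p}$.

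Separately, your step asserting \emph{equivalently} $|v(x)-v_{B,w}|\le C\,\mathcal I g(x)$ via a dyadic H\"older estimate is not justified. $\mathcal I g(x)$ is dominated by dyadic sums of $L^1$-averages $\fint_{B(x,2^{-k}r)}g\,dw$. The telescoping with the $(1,p)$-Poincar\'e inequality instead produces $L^p$-averages $(\fint g^p\,dw)^{1/p}$. Bounding the latter by $\mathcal I g$ would require the reverse of H\"older's inequality, or something like a $(1,1)$-Poincar\'e inequality, which $p$-admissibility does not provide. Working directly with the $L^p$-averaged dyadic sum, as above, avoids the issue entirely.
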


\begin{proof}
For trace estimates under the upper codimension condition
$\nu(B(x,\rho))\le C\rho^{-\lambda}w(B(x,\rho))$, see
\cite[Theorem 1.3 and the proof of Theorem 4.4]{Maly2017}.
We give the short argument to record the dependence on $r$.
Telescope the weighted averages of $v$ at radii $2^{-j} r$.
By the Poincar\'{e} and doubling conditions and Fubini's theorem,
the $L^p( \nu )$ norm of the $j$-th difference is controlled by
\[
C M_{\mathrm g}^{1/p}(2^{-j} r)^{1 - \lambda / p} \| \nabla v \|_{L^p(w)}.
\]
The initial average is bounded by $C M_{\mathrm g}^{1/p} r^{- \lambda / p} \|v\|_{L^p(w)}$.
Since $1 - \lambda/p > 0$, summation proves \eqref{eqn:trace}.
\end{proof}

The next three lemmas follow from standard Moser iteration for measure
data; see \cite[Section 3.1]{MR1461542}.
For this method with degeneracy given by powers of strong $A_\infty$
weights, see \cite[Sections 3--5]{DiFazioZamboni2006}, where a weighted
Fefferman--Poincar\'e inequality is used in Moser iteration.
Here Lemma \ref{lem:trace} supplies the required estimate: choosing its
scale gives, for every $\varepsilon>0$,
\[
\int |v|^p\,d|\nu|
\le \varepsilon\int |\nabla v|^p\,dw
+C_\varepsilon\int |v|^p\,dw.
\]
Applied to cutoff powers of the shifted solution, this absorbs the measure
term in the energy estimate. The Sobolev inequality then gives the
positive-power iteration for upper bounds and the negative-power and
logarithmic estimates for the weak Harnack inequality.
On a ball of radius $R$, the shift has size
$M_{\mathrm g}^{1/(p-1)}R^\beta$.

\begin{lemma}\label{lem:maximum}
Assume that \eqref{eqn:full} holds, and let $\Omega \subset B(x_0, R_{0})$ be an open set.
Suppose $u \in H^{1,p}(\Omega;w)$ satisfies $-\laplacian_{p,w}u = \nu$ in $\Omega$.
Then we have
\begin{equation}\label{eqn:maximum}
\sup_\Omega u \le \sup_{\partial \Omega}u + C M_{\mathrm g}^{1/(p-1)} R_{0}^{\beta},
\qquad
\inf_{\Omega} u \ge \inf_{\partial \Omega} u - C M_{\mathrm g}^{1/(p-1)} R_{0}^{\beta}.
\end{equation}
\end{lemma}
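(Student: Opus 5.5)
By the symmetry $u\mapsto -u$, $\nu\mapsto-\nu$ (condition \eqref{eqn:full} involves only $|\nu|$), it suffices to prove the upper bound in \eqref{eqn:maximum}. We may assume $k_0:=\sup_{\partial\Omega}u<\infty$. Set $B_0=B(x_0,R_0)$, $\mu=|\nu|$ and $\theta = M_{\mathrm g}^{1/(p-1)}R_0^{\beta}$. For $k\ge k_0$ the function $(u-k)_+$ lies in $H^{1,p}_0(\Omega;w)$; we extend it by zero to $B_0$ and use it as a test function. Testing is legitimate by density in $H^{1,p}_0$, since $\mu$ does not charge sets of zero capacity and Lemma~\ref{lem:trace} controls $\int v^p\,d\mu$. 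The plan is a De Giorgi/Moser iteration for $v=(u-k)_+$ on the single ball $B_0$, using Lemma~\ref{lem:trace} at the scale $r=R_0$ to absorb the measure term. No cutoff is needed, because $v$ already vanishes on $\partial\Omega$.

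\textbf{Step 1 (energy estimate).} For $q\ge 1$, test with $\varphi=v^{q}$; for large $q$, first truncate $v$ at level $m$ and let $m\to\infty$. This gives
\[
q\int |\nabla v|^p v^{q-1}\,dw\le \int v^{q}\,d\mu .
\]
Write $W=v^{(q+p-1)/p}$, so that $|\nabla W|^p \asymp q^{p-1}\ldots$ times the left integrand, and apply Young's inequality in the form $v^q\le \varepsilon\, v^{q+p-1}+C_\varepsilon$ on $\{v>0\}$. It is cleaner still to shift: put $\bar v=v+\theta$ on $\{u>k\}$ and test with $\bar v^{q}-\theta^q$. Since $\bar v\ge\theta$, we have $\bar v^{q}\le \theta^{1-p}\bar v^{q+p-1}$. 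Lemma~\ref{lem:trace} with $r=R_0$ then gives
\[
\int \bar v^{q+p-1}\,d\mu\le C M_{\mathrm g}\theta^{1-p}\Big(R_0^{p-\lambda}\!\int|\nabla W|^p\,dw+R_0^{-\lambda}\!\int W^p\,dw\Big),
\]
where now $W=\bar v^{(q+p-1)/p}\chi_{\{u>k\}}$. Here $M_{\mathrm g}\theta^{1-p}R_0^{p-\lambda}=1$, so the gradient term has a fixed constant. To make it absorbable I would instead choose $\theta=AM_{\mathrm g}^{1/(p-1)}R_0^\beta$ with $A$ large, depending on the structural constants and $\lambda$ (this produces the $C$ in \eqref{eqn:maximum}). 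The outcome is
\[
\int_{B_0}|\nabla W|^p\,dw\le C q^{p-1} R_0^{-p}\int_{B_0}W^p\,dw .
\]
This is the main obstacle. The point is that the shift size $\theta$ is exactly calibrated so that the Adams constant is scale-matched and the gradient term absorbs with room to spare. Handling $W$ rather than $v$ requires care, because $\bar v$ does not vanish at the boundary; it jumps to $\theta$ on $\partial\{u>k\}$. I would therefore keep $W=(\bar v^{s}-\theta^{s})$ and use $\bar v^s\le 2(\bar v^s-\theta^s)+2\theta^s$ on $\{\bar v\ge 2^{1/s}\theta\}$, which means working with the level $k$ replaced by $k+\theta$.

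\textbf{Step 2 (iteration).} Since $W\in H^{1,p}_0(B_0;w)$, the Sobolev inequality \eqref{eqn:sobolev} on $B_0$ gives
\[
\Big(\fint_{B_0}W^{p\chi}\,dw\Big)^{1/(p\chi)}\le C q^{(p-1)/p}\Big(\fint_{B_0}W^p\,dw\Big)^{1/p}.
\]
Iterating with exponents $q_j+p-1=p\chi^j$ in the standard Moser way (the product of the $q_j$-factors converges) yields
\[
\sup_{B_0}\bar v\le C\Big(\fint_{B_0}\bar v^{p}\,dw\Big)^{1/p}.
\]

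\textbf{Step 3 (closing).} It remains to bound $\fint_{B_0}(u-k)_+^p\,dw$ by $C\theta^p$ for a suitable level. Combine the case $q=1$ of the energy estimate, $\int|\nabla v|^p\,dw\le \int v\,d\mu$, with Lemma~\ref{lem:trace}, Hölder's inequality and Sobolev for $v\in H^{1,p}_0(B_0;w)$, namely $\int v^p\,dw\le CR_0^p\int|\nabla v|^p\,dw$. Taking $r=R_0$ in \eqref{eqn:trace}, this gives
\[
\int|\nabla v|^p\,dw\le \mu(B_0)^{1-1/p}\Big(\int v^p\,d\mu\Big)^{1/p}\le C\,(M_{\mathrm g}R_0^{-\lambda}w(B_0))^{1-1/p}\big(M_{\mathrm g}R_0^{p-\lambda}\big)^{1/p}\|\nabla v\|_{L^p(w)}.
\]
Hence $\fint v^p\,dw\le CR_0^p\fint|\nabla v|^p\,dw\le C\theta^p$, with $k=k_0$. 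Inserting this into Step 2 gives $\sup u\le k_0+C\theta$, which is \eqref{eqn:maximum}.
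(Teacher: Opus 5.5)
Your overall scheme (a shift of size $\asymp M_{\mathrm g}^{1/(p-1)}R_0^{\beta}$, Moser iteration on the single ball $B_0$, and closing with the $q=1$ energy estimate) is the one the paper indicates, and your Step~3 computation is correct. The gap is in Step~1: absorption at the fixed scale $r=R_0$ is not uniform in $q$. Testing with $\bar v^{q}-\theta^{q}$ and writing $W=\bar v^{(q+p-1)/p}$ gives
\[
q\Bigl(\frac{p}{q+p-1}\Bigr)^{p}\int|\nabla W|^{p}\,dw\le\theta^{1-p}\int_{B_0}W^{p}\,d\mu .
\]
The coefficient on the left is $\asymp q^{1-p}\to0$, while Lemma~\ref{lem:trace} at $r=R_0$ puts $CA^{1-p}\int|\nabla W|^p\,dw$ on the right. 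Absorption therefore needs $A\gtrsim q$. With $A$ fixed it fails once $q\gtrsim A$, and letting $A$ grow with $q$ makes the shift blow up along the iteration. So the bound $\int|\nabla W|^p\,dw\le Cq^{p-1}R_0^{-p}\int W^p\,dw$ does not follow, and "absorbs with room to spare" is false for large $q$.

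The fix is what the paper means by choosing the scale in Lemma~\ref{lem:trace}. Keep $\theta=M_{\mathrm g}^{1/(p-1)}R_0^{\beta}$ fixed, but use a $q$-dependent scale $r_q=c\,q^{-1/\beta}R_0\le R_0$, chosen so that $C(r_q/R_0)^{p-\lambda}$ is half the left coefficient. The lower-order coefficient becomes $CR_0^{-p}(R_0/r_q)^{\lambda}\asymp R_0^{-p}q^{\lambda/\beta}$. Hence $\int|\nabla W|^p\,dw\le Cq^{p-1+\lambda/\beta}R_0^{-p}\int W^p\,dw$. A fixed polynomial factor in $q$ is harmless in the iteration, since $\sum_j j\chi^{-j}<\infty$.

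There is a second, smaller problem. Your $W=\bar v^{(q+p-1)/p}\chi_{\{u>k\}}$ jumps across $\partial\{u>k\}$, so neither Lemma~\ref{lem:trace} nor the assertion $W\in H^{1,p}_0(B_0;w)$ in Step~2 applies to it. The level-shifting device you sketch is unnecessary. Instead, put $\bar v=(u-k)_{+}+\theta$ on all of $\R^n$, with $(u-k)_+$ extended by zero. Then $W$ is a local Sobolev function equal to $\theta^{(q+p-1)/p}$ off $\{u>k\}$. Apply Lemma~\ref{lem:trace} to $\eta W$, where $\eta=1$ on $B_0$, $\eta$ is supported in $2B_0$, and $|\nabla\eta|\le C/R_0$. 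The extra cutoff term and $\int_{2B_0}W^p\,dw$ are both bounded by $C\int_{B_0}W^p\,dw$, using doubling and $W\ge\theta^{(q+p-1)/p}$. For Step~2, apply the Sobolev inequality to $W-\theta^{(q+p-1)/p}\in H^{1,p}_0(B_0;w)$, then use $\theta^{(q+p-1)/p}\le(\fint_{B_0}W^p\,dw)^{1/p}$ to return to $W$. With these two corrections, the rest of your argument goes through.
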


\begin{lemma}\label{lem:weak-harnack}
Assume that \eqref{eqn:full} holds with $0 < R \le R_{0}$. 
Let $u\ge0$ satisfy
$-\laplacian_{p,w}u\ge- \nu$ in $B(x_0,2R)$.
Fix $0 < q < \chi(p -1)$, where $\chi > 1$ is the exponent in the Sobolev inequality \eqref{eqn:sobolev}.
Then
\begin{equation}\label{eqn:harnack}
\left(\fint_{B(x_0,R)}u^q\,dw\right)^{1/q}
\le C \left(\inf_{B(x_0,R)}u+M_{\mathrm g}^{1/(p-1)}R^\beta\right),
\end{equation}
where $C$ is a constant depending only on the structural constants, $\beta$ and $q$.
Combining this inequality with the Caccioppoli estimate also gives
\[
R^{p-1}\fint_{B(x_0,R)}|\nabla u|^{p-1}\,dw
\le C\left(\inf_{B(x_0,R)}u+M_{\mathrm g}^{1/(p-1)}R^\beta\right)^{p-1},
\]
where $C$ depends only on the structural constants and $\beta$.
\end{lemma}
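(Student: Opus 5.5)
We prove Lemma \ref{lem:weak-harnack} by running Moser's scheme for the shifted function, using Lemma \ref{lem:trace} to absorb the measure term. Set $k:=M_{\mathrm g}^{1/(p-1)}R^\beta$ (if $M_{\mathrm g}=0$ take $k>0$ arbitrary and let $k\to0$ at the end), and put $\bar u:=u+k\ge k>0$. Since $\nabla\bar u=\nabla u$, $\bar u$ also satisfies $-\laplacian_{p,w}\bar u\ge-\nu$ in $B(x_0,2R)$. By scaling ($x\mapsto x_0+Rx$, $dw$ renormalized by $w(B(x_0,R))$, $\nu$ rescaled accordingly) we may assume $R=1$. In these normalized units the Morrey constant becomes comparable to $k^{p-1}$, so all bounds become homogeneous of degree one in $\bar u$.

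\textbf{Step 1: energy estimates for powers.} For $\eta\in C_c^\infty(B(x_0,2))$ and an exponent $s\ne 0$ with $s\neq p-1$, test the inequality with $\varphi=\eta^p\bar u^{\,s-p+1}$ (for $s<p-1$ this is a negative power, admissible because $\bar u\ge k$). The standard computation gives
\[
\int\eta^p|\nabla \bar u^{s/p}|^p\,dw\le C(s)\int \bar u^{s}|\nabla\eta|^p\,dw + C(s)\int \eta^p\bar u^{s-p+1}\,d\nu .
\]
Write $\bar u^{s-p+1}\le k^{-(p-1)}\bar u^{s}$, so the last term is bounded by $k^{-(p-1)}\int(\eta\bar u^{s/p})^p\,d\nu$. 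Apply Lemma \ref{lem:trace} with $v=\eta\bar u^{s/p}$ (after approximation; $\nu$ charges no sets of zero capacity), choosing the scale $r$ small depending on $C(s)$. The factor $k^{-(p-1)}M_{\mathrm g}$ is $\asymp1$ in normalized units, so the gradient part is absorbed into the left side and the remainder is $C\int \eta^p\bar u^s\,dw$. This yields reverse-Hölder-type Caccioppoli inequalities for $\bar u^{s/p}$ with constants depending only on $s$, the structural constants and $\lambda$.

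\textbf{Step 2: iteration.} Combining Step 1 with \eqref{eqn:sobolev} gives, for negative exponents, the Moser iteration
\[
\Bigl(\fint_{B}\bar u^{-t}\,dw\Bigr)^{-1/t}\le C\inf_{B(x_0,1)}\bar u ,
\]
and for positive exponents the climb from small $t$ up to any $q<\chi(p-1)$. The upper limit arises because the exponents $s=q-p+1+\dots$ must stay below $p-1$; this is exactly why $q<\chi(p-1)$. The main obstacle is the crossover between the two regimes. Test with $\varphi=\eta^p\bar u^{1-p}$, which gives the logarithmic estimate $\int\eta^p|\nabla\log\bar u|^p\,dw\le C$; the measure term is again handled by Step 1's absorption, since $\bar u^{1-p}\le k^{1-p}$. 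Together with the Poincaré inequality this shows $\log\bar u\in BMO$ with respect to $dw$ uniformly. The John--Nirenberg lemma on the doubling space $(\R^n,dw)$ then gives a small $t_0$ with $\fint\bar u^{t_0}\,dw\,\fint\bar u^{-t_0}\,dw\le C$. Linking the two chains (covering arguments handle the enlargement factor $\kappa$) gives $(\fint_{B(x_0,1)}\bar u^q\,dw)^{1/q}\le C\inf\bar u$. Undoing the shift, $\bar u\ge u$ and $\inf\bar u=\inf u+k$, which proves \eqref{eqn:harnack}.

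\textbf{Step 3: gradient bound.} Take $s=q-(p-1)$, which is negative for $q<p-1$, with $q\in(0,p-1)$ fixed. Step 1 gives
\[
\int_{B(x_0,R)}|\nabla u|^p\bar u^{q-p}\,dw\le C R^{-p}\int_{B(x_0,2R)}\bar u^{q}\,dw ,
\]
where Step 2 is applied on a slightly larger ball, or one uses the radius $2R$ and a covering. By Hölder's inequality with exponents $p/(p-1)$ and $p$,
\[
\int|\nabla u|^{p-1}\,dw\le\Bigl(\int|\nabla u|^p\bar u^{q-p}\,dw\Bigr)^{(p-1)/p}\Bigl(\int\bar u^{(p-q)(p-1)}\,dw\Bigr)^{1/p}.
\]
Choose $q$ so that $(p-q)(p-1)<\chi(p-1)$ as well, which is possible since $\chi>1$ (e.g.\ $q$ close to $p-1$ when $p-q<\chi$ is attainable). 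Then apply \eqref{eqn:harnack} to both factors. This yields $R^{p-1}\fint|\nabla u|^{p-1}\,dw\le C(\inf u+k)^{p-1}$ with $C$ depending only on the structural constants and $\beta$.
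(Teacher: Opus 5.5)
Your proposal is correct and follows essentially the same route as the paper's sketch: shift by $M_{\mathrm g}^{1/(p-1)}R^\beta$, absorb the measure term in the power Caccioppoli estimates via Lemma \ref{lem:trace}, run the negative-power, logarithmic (John--Nirenberg) and positive-power Moser chains, and get the gradient bound from a weighted Caccioppoli estimate plus H\"older. Three small corrections: the Caccioppoli estimate of Step 1 holds for supersolutions only when $s<p-1$, which is all you actually use; in Step 3 the relevant exponent is $s=q$, i.e.\ test power $q-p+1$; and the right-hand side there should be taken on $B(x_0,3R/2)$ and controlled by \eqref{eqn:harnack} through a covering and chaining argument.
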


\begin{lemma}\label{lem:interior}
Assume \eqref{eqn:full} and $0 < R \le R_{0}$. 
Then every weak solution to $-\laplacian_{p,w}u = \nu$ in $B(x_0,2R)$ has a locally H\"older
continuous representative.
Moreover,
\begin{equation}\label{eqn:interior}
\osc_{B(x_0,r)}u\le C \left( \frac{r}{R} \right)^{ \alpha_{ \mathrm{in} } } 
\left(\osc_{B(x_0,2R)}u+M_{\mathrm g}^{1/(p-1)}R^\beta\right)
\end{equation}
for all $0 < r \le R$, where $C$ and $0 < \alpha_{ \mathrm{in} } <\min\{\beta,1\}$ are positive constants
depending only on the structural constants and $\beta$.
\end{lemma}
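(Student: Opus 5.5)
The proof is an oscillation-decay argument built from two ingredients. One is the weak Harnack inequality of Lemma \ref{lem:weak-harnack} applied to shifted solutions. The other is an energy bound that absorbs the measure; this bound comes from the trace estimate of Lemma \ref{lem:trace} and is already contained in the preceding lemmas. Throughout, $0<r\le R$ and we work in balls $B_\rho:=B(x_0,\rho)$ with $\rho\le 2R\le 2R_0$. Scale-invariance of \eqref{eqn:full} under restriction to smaller radii means the data term $M_{\mathrm g}^{1/(p-1)}\rho^\beta$ appears at every scale.

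\emph{Existence of a locally bounded representative.} Apply Lemma \ref{lem:weak-harnack} to $u-\inf_{B_{2\rho}}u$ and to $\sup_{B_{2\rho}}u-u$; both are nonnegative, and $\pm u$ satisfy $-\laplacian_{p,w}(\pm u)\ge -|\nu|$. Local boundedness comes from the positive-power Moser iteration that underlies Lemma \ref{lem:maximum}, applied on balls $B_{2\rho}\subset B(x_0,2R)$. The resulting sup bound on $B_\rho$ is controlled by an $L^q(dw)$ average plus $M_{\mathrm g}^{1/(p-1)}\rho^\beta$.

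\emph{One-step oscillation decay.} Let $M_{2\rho}=\sup_{B_{2\rho}}u$, $m_{2\rho}=\inf_{B_{2\rho}}u$, and $\omega=M_{2\rho}-m_{2\rho}$. Apply \eqref{eqn:harnack} with $q=1$ (admissible since $\chi(p-1)>1$ can be arranged, or use small $q$ and Hölder) to $M_{2\rho}-u$ and to $u-m_{2\rho}$ on $B_\rho$, and add the two inequalities. Since the two functions sum to the constant $\omega$, this yields
\[
\omega \le C\Bigl(M_{2\rho}-M_\rho + m_\rho-m_{2\rho} + M_{\mathrm g}^{1/(p-1)}\rho^\beta\Bigr),
\]
and hence
\[
\osc_{B_\rho}u\le \theta\,\osc_{B_{2\rho}}u + C M_{\mathrm g}^{1/(p-1)}\rho^\beta,
\qquad \theta=1-C^{-1}\in(0,1).
\]
Here the measure is handled with the two-sided bound $|\nu|$ for the supersolution inequalities. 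This is legitimate because $u$ solves the equation, so $\pm u$ are supersolutions with datum $\mp\nu\ge-|\nu|$.

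\emph{Iteration.} Set $\rho_k=2^{-k}R$ and iterate the one-step decay. This gives
\[
\osc_{B_{\rho_k}}u\le \theta^k\osc_{B_{2R}}u + C M_{\mathrm g}^{1/(p-1)}\sum_{j<k}\theta^{k-1-j}\rho_j^\beta .
\]
Choose $\alpha_{\mathrm{in}}$ with $2^{-\alpha_{\mathrm{in}}}\ge\theta$ (decreasing $\theta$'s exponent if necessary) and $\alpha_{\mathrm{in}}<\min\{\beta,1\}$. The geometric sum is then bounded by $C2^{-k\alpha_{\mathrm{in}}}R^\beta$, because $\rho_j^\beta=2^{-j\beta}R^\beta$ decays faster than $2^{-j\alpha_{\mathrm{in}}}$. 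Interpolating between dyadic radii gives \eqref{eqn:interior}.

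\emph{Hölder representative.} Applying \eqref{eqn:interior} at every point of a compact subset shows that Lebesgue points of $u$ with respect to $dw$ form a uniformly Hölder function, using doubling. That function is the desired representative.

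\emph{Main obstacle.} The delicate point is making sure that Lemma \ref{lem:weak-harnack} applies with a constant independent of the shift. One must check that the level $\sup_{B_{2\rho}} u$ is finite before it is used, which is why the local boundedness step comes first. One must also check that the smallness requirement $\alpha_{\mathrm{in}}<\beta$ is compatible with the decay factor $\theta$; taking $\alpha_{\mathrm{in}}$ smaller only weakens the statement, so this works.
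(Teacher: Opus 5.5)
Your argument is correct and follows the route the paper intends. The paper gives no separate proof: it attributes the lemma to the standard measure-data Moser iteration, namely positive-power local boundedness together with the weak Harnack inequality of Lemma \ref{lem:weak-harnack} carrying the shift $M_{\mathrm g}^{1/(p-1)}R^\beta$. Your two-sided weak Harnack step, the dyadic iteration with $\alpha_{\mathrm{in}}\le\log_2(1/\theta)$ and $\alpha_{\mathrm{in}}<\min\{\beta,1\}$, and the Lebesgue-point representative are exactly that deduction.

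One small correction: when $\chi(p-1)\le1$ you cannot simply arrange $q=1$. Instead take an admissible $q<1$ and combine the two bounds via $\omega^q\le\fint_{B_\rho}(M_{2\rho}-u)^q\,dw+\fint_{B_\rho}(u-m_{2\rho})^q\,dw$, which uses subadditivity of $t\mapsto t^q$ rather than H\"older.
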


\begin{lemma}\label{lem:boundary}
Assume \eqref{eqn:full} holds with $0 < R \le R_{0}$. 
Fix $\xi\in\partial\Omega$ and suppose \eqref{eqn:cdc} holds with $\Gamma = \{ \xi \}$.
Let $u\in H^{1,p}(\Omega\cap B(\xi,R);w)$ be bounded and satisfy
\[
\begin{aligned}
-\laplacian_{p,w}u\le \nu, \quad u \ge m \quad \text{in } \Omega \cap B(\xi, R),
\\
u = m \quad \text{on } \partial\Omega\cap B(\xi,R/2).
\end{aligned}
\]
Then we have
\begin{equation}\label{eqn:boundary}
\sup_{\Omega\cap B(\xi,r)}(u-m)
\le C \left( \frac{r}{R} \right)^{ \alpha_{ \mathrm{bd}} }\sup_{\Omega\cap B(\xi,R/2)}(u-m)
+CM_{\mathrm g}^{1/(p-1)} R^{\beta}
\end{equation}
for all $0 < r \le R / 8$, where $C$ and $0 < \alpha_{ \mathrm{bd} } <\min\{\beta,1\}$ are positive constants
depending only on the structural constants, $\beta$ and $\gamma$.
\end{lemma}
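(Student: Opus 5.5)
We prove the decay estimate \eqref{eqn:boundary} through a one-step oscillation reduction at the boundary point $\xi$, which we then iterate over dyadic scales. Replace $u$ by $u-m$, so that $u\ge0$ in $\Omega\cap B(\xi,R)$, $u=0$ on $\partial\Omega\cap B(\xi,R/2)$, and $-\laplacian_{p,w}u\le\nu$. For $\rho\le R/2$ put $M(\rho):=\sup_{\Omega\cap B(\xi,\rho)}u$, and extend $M(\rho)-u$ by the value $M(\rho)$ to $B(\xi,\rho)\setminus\Omega$; call the extension $v_\rho$. The boundary condition $u=0$ on $\partial\Omega\cap B(\xi,R/2)$ means $(u-k)_+\in H^{1,p}_0$ locally for every $k>0$. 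By the usual pasting argument, $v_\rho$ is then a nonnegative weak supersolution of $-\laplacian_{p,w}v_\rho\ge-\nu$ in $B(\xi,\rho)$, with $\nu$ extended by zero outside $\Omega$ if necessary; the global Morrey condition \eqref{eqn:full} is unaffected by this extension. Moreover $v_\rho= M(\rho)$ on $\cl{B(\xi,\rho/4)}\setminus\Omega$.

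The key step is a capacitary lower bound for $v_\rho$. Apply Lemma \ref{lem:weak-harnack} on $B(\xi,\rho/4)\subset B(\xi,\rho/2)$ with a small exponent $q$, say $q=1$ or $q$ close to $0$. This gives
\[
\Big(\fint_{B(\xi,\rho/4)}v_\rho^q\,dw\Big)^{1/q}\le C\Big(\inf_{B(\xi,\rho/4)}v_\rho+M_{\mathrm g}^{1/(p-1)}\rho^\beta\Big).
\]
To bound the left side from below, we use the capacity density condition \eqref{eqn:cdc}. The set $E=\cl{B(\xi,\rho/8)}\setminus\Omega$, on which $v_\rho\equiv M(\rho)$, has capacity comparable to that of the ball. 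A standard lemma (a Poincar\'e inequality for functions vanishing on a set of positive capacity, obtained via the Maz'ya capacitary inequality together with Lemma \ref{lem:weak-harnack}) then shows that a nonnegative supersolution equal to $M(\rho)$ on a fat set satisfies $\inf_{B(\xi,\rho/8)}v_\rho\ge c(\gamma)M(\rho)-CM_{\mathrm g}^{1/(p-1)}\rho^\beta$.

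For a cleaner argument we would instead use the truncation $\min(v_\rho,M(\rho))$ and test against a capacitary potential of $E$. In either form this gives
\[
M(\rho/8)\le(1-c)M(\rho)+CM_{\mathrm g}^{1/(p-1)}\rho^\beta.
\]
This is the main obstacle: we must make precise the weighted Maz'ya-type estimate linking \eqref{eqn:cdc} to the weak Harnack lower bound while keeping track of the measure term. We would follow the unweighted measure-data argument of \cite{HaraHolder} and \cite[Chapter 6]{MR2305115}. The measure term enters only through the shift $M_{\mathrm g}^{1/(p-1)}\rho^\beta$ already present in Lemma \ref{lem:weak-harnack}.

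Finally, iterate with $\rho_j=8^{-j}R/2$:
\[
M(\rho_{j})\le\theta^j M(R/2)+CM_{\mathrm g}^{1/(p-1)}\sum_{i<j}\theta^{j-1-i}\rho_i^\beta,\qquad \theta=1-c.
\]
We choose $\alpha_{\mathrm{bd}}<\min\{\beta,1\}$ with $8^{-\alpha_{\mathrm{bd}}}\ge\theta$, decreasing $\alpha_{\mathrm{bd}}$ if necessary. Since $\rho_i^\beta\le R^\beta$, the sum is bounded by $CM_{\mathrm g}^{1/(p-1)}R^\beta$ because $\theta<1$, and $\theta^j\le C(\rho_j/R)^{\alpha_{\mathrm{bd}}}$. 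Interpolating between dyadic radii for $0<r\le R/8$ yields \eqref{eqn:boundary}.
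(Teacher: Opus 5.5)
Your reduction to $m=0$, the zero extension making $v_\rho$ a nonnegative supersolution in $B(\xi,\rho)$ that equals $M(\rho)$ quasi-everywhere off $\Omega$, and the final dyadic iteration are all fine. For signed $\nu$ the extension only satisfies $-\laplacian_{p,w}v_\rho\ge-|\nu|$, which is harmless. This matches the architecture of the paper's proof.

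The gap is the one-step contraction $\inf_{B(\xi,\rho/8)}v_\rho\ge cM(\rho)-CM_{\mathrm g}^{1/(p-1)}\rho^\beta$. It carries the whole content of the lemma, you defer it to an unspecified ``standard lemma'', and with the ingredients you name it does not close. The $L^q$ part of Lemma \ref{lem:weak-harnack} involves only $dw$-essential values. Since $E$ may be $w$-null, that bound never sees $E$. Maz'ya's inequality applied to $M(\rho)-v_\rho$, which vanishes quasi-everywhere on $E$, does bring in the capacity of $E$, but only against $\int|\nabla v_\rho|^p\,dw$. You would then need an upper bound for this energy that is small when $\inf v_\rho$ is small compared with $M(\rho)$, and nothing in the proposal provides it. The truncation $\min\{v_\rho,M(\rho)\}$ in your ``cleaner'' variant is vacuous, since $v_\rho\le M(\rho)$ already.

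The missing ingredient is the one the paper's proof names: the gradient estimate in Lemma \ref{lem:weak-harnack}. Let $\eta$ equal one near $\cl{B(\xi,\rho/4)}$, be supported in $B(\xi,\rho/2)$, and satisfy $|\nabla\eta|\le C/\rho$. Test the subsolution inequality for $\tilde u:=M(\rho)-v_\rho$ with $\eta^p\tilde u$, keeping the $(p-1)$-st power of the gradient instead of absorbing it. Then use $\tilde u\le M(\rho)$, the gradient bound of Lemma \ref{lem:weak-harnack} on $B(\xi,\rho/2)$, and $(p-1)\beta=p-\lambda$ to get
\[
\begin{aligned}
\int\eta^p|\nabla v_\rho|^p\,dw
&\le pM(\rho)\int\eta^{p-1}|\nabla v_\rho|^{p-1}|\nabla\eta|\,dw+M(\rho)\,|\nu|(B(\xi,\rho/2))\\
&\le CM(\rho)\rho^{-p}w(B(\xi,\rho))\Bigl(\inf_{B(\xi,\rho/2)}v_\rho+M_{\mathrm g}^{1/(p-1)}\rho^\beta\Bigr)^{p-1}.
\end{aligned}
\]
Hence $\eta v_\rho/M(\rho)$, which equals one quasi-everywhere on $\cl{B(\xi,\rho/4)}\setminus\Omega$, is a capacity competitor in $B(\xi,\rho/2)$. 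Its cutoff term $\rho^{-p}\int_{B(\xi,\rho/2)}(v_\rho/M(\rho))^p\,dw$ is controlled by the $L^q$ bound for any admissible $q\le p$, because $0\le v_\rho/M(\rho)\le1$. Since $\capacity_{p,w}(\cl{B(\xi,\rho/4)},B(\xi,\rho/2))\asymp\rho^{-p}w(B(\xi,\rho))$, condition \eqref{eqn:cdc} gives $c\gamma\le C(X^{p-1}+X^q)$ for
\[
X:=\frac{\inf_{B(\xi,\rho/2)}v_\rho+M_{\mathrm g}^{1/(p-1)}\rho^\beta}{M(\rho)}.
\]
Hence $X\ge c(\gamma)>0$, that is, $M(\rho/2)\le(1-c)M(\rho)+M_{\mathrm g}^{1/(p-1)}\rho^\beta$. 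Your iteration then completes the proof.
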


\begin{proof}
This is the standard capacitary boundary estimate in \cite{MR1461542};
see also \cite[Section 4]{Hara2024}.
The gradient estimate in Lemma \ref{lem:weak-harnack}, combined with
\eqref{eqn:cdc}, gives the boundary contraction with error
$CM_{\mathrm g}^{1/(p-1)}r^\beta$.
Iteration proves \eqref{eqn:boundary}.
\end{proof}

\section{The weighted local equation}

\subsection{Solvability and capacity density}

In this section let $\Gamma\subset\partial\Omega$ be nonempty and closed.
Let $\nu$ be a signed locally finite Radon measure on $\Omega$ satisfying
\begin{equation}\label{eqn:morrey}
|\nu|(B(x, r)\cap\Omega) \le M r^{- \lambda} w(B(x, r)), \qquad x \in \Omega, \quad 0 < r < \delta_{\Gamma}(x) / 2. 
\end{equation}
When $\Gamma\ne\partial\Omega$, these balls need not lie in $\Omega$;
the measure is evaluated on their intersections with $\Omega$.
We further assume 
\[
\|\delta_{\Gamma}\|_{L^{\infty}(\spt\nu)} < \infty.
\]
Here $\spt\nu$ means the support of $|\nu|$ in $\Omega$, and the norm denotes
the supremum of the continuous distance function on that support.
For the zero measure its value is zero, and we take the zero solution.
We first construct a supersolution. For $\Gamma=\partial\Omega$,
Theorem \ref{thm:local} will then give a bounded solution $u$ satisfying
the estimate below, with
$0<\alpha_{\mathrm{ss}}<\min\{\alpha_{\mathrm{bd}},\beta,1\}$:
\begin{equation}\label{eqn:main-bound}
|u(x)| \le C M^{1/(p-1)} \|\delta_{\Gamma}\|_{L^{\infty}(\spt\nu)}^{\beta - \alpha_{ \mathrm{ss} }} \min\{\delta_{\Gamma}(x), \|\delta_{\Gamma}\|_{L^{\infty}(\spt\nu)}\}^{\alpha_{ \mathrm{ss} }}.
\end{equation}

\begin{proposition}\label{prop:barrier}
Assume \eqref{eqn:cdc} on $\Gamma$.
If $\nu \ge 0$ satisfies \eqref{eqn:morrey} and $0 < \|\delta_{\Gamma}\|_{L^{\infty}(\spt\nu)} < \infty$,
there is a continuous $S \in H^{1, p}_{\loc}(\Omega; w)$ such that
\begin{equation}\label{eqn:barrier}
- \laplacian_{p, w} S \ge \nu
\end{equation}
\begin{equation}\label{eqn:barrier2}
\begin{aligned}
&
\frac{1}{C} M^{1/(p-1)} \|\delta_{\Gamma}\|_{L^{\infty}(\spt\nu)}^{\beta - \alpha_{ \mathrm{ss} }}
\min\{\delta_{\Gamma}, \|\delta_{\Gamma}\|_{L^{\infty}(\spt\nu)}\}^{\alpha_{ \mathrm{ss} }} \\
& \quad \le
S \le C M^{1/(p-1)} \|\delta_{\Gamma}\|_{L^{\infty}(\spt\nu)}^{\beta - \alpha_{ \mathrm{ss} }}
\min\{\delta_{\Gamma}, \|\delta_{\Gamma}\|_{L^{\infty}(\spt\nu)}\}^{\alpha_{ \mathrm{ss} }}
\end{aligned}
\end{equation}
in $\Omega$.
Here $C$ and $\alpha_{ \mathrm{ss} } < \min\{ \alpha_{ \mathrm{bd} }, \beta, 1\}$
are positive constants depending only on the structural constants, $\beta$ and $\gamma$.
\end{proposition}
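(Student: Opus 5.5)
By scaling we normalize $M=1$ and $D:=\|\delta_\Gamma\|_{L^\infty(\spt\nu)}=1$. Replacing $u$ by $M^{1/(p-1)}D^{\beta}u(x/D)$ and $w$ by $w(D\,\cdot)$ preserves the structural constants, $\gamma$ and $\beta$. The construction follows Ancona's strong-barrier scheme as in \cite{Hara2024,HaraHolder}. We build $S$ as a sum (or supremum) of local solutions on a Whitney-type decomposition of the region near $\Gamma$. These are glued by a comparison function of the form $\delta_\Gamma^{\alpha}$-like behaviour obtained from the boundary estimate, Lemma \ref{lem:boundary}.

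\emph{Step 1 (dyadic layers).} Set $\Omega_k=\{x\in\Omega:2^{-k-1}<\delta_\Gamma(x)\le 2^{-k}\}$ for $k\ge0$; by assumption $\spt\nu\subset\bigcup_{k\ge0}\Omega_k$ (after normalization). Write $\nu=\sum_k\nu_k$ with $\nu_k=\nu|_{\Omega_k}$. Cover $\Omega_k$ by balls $B(x,2^{-k-4})$ with bounded overlap. On these balls, \eqref{eqn:morrey} yields the global condition \eqref{eqn:full} for $\nu_k$ with $R_0\asymp 2^{-k}$ and constant $C$; at larger scales we use the trivial bound $\nu_k(B(x,r))\le \sum$ of small balls, controlled by doubling.

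\emph{Step 2 (layer potentials).} For each $k$, solve $-\laplacian_{p,w}v_k=\nu_k$ in $U_k=\Omega\cap\{\delta_\Gamma<2^{-k+2}\}$ (or a suitable bounded piece) with zero boundary values, using the variational method. Lemma \ref{lem:trace} puts $\nu_k$ in the dual of $H^{1,p}_0$. Lemma \ref{lem:maximum} then gives $0\le v_k\le C2^{-k\beta}$. Lemma \ref{lem:boundary}, applied at points of $\Gamma$ with $R\asymp 2^{-k}$, together with the capacity density condition, gives
\[
v_k(x)\le C2^{-k\beta}\min\{1,(2^k\delta_\Gamma(x))^{\alpha_{\mathrm{bd}}}\}.
\]
Away from $U_k$ we need decay in the other direction; we would use the weak Harnack inequality, Lemma \ref{lem:weak-harnack}, to get a lower bound $v_k\ge c2^{-k\beta}$ on $\Omega_k$. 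That lower bound is what we will use for the lower estimate.

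\emph{Step 3 (superposition and the main obstacle).} Because the operator is nonlinear, $\sum v_k$ is not a supersolution. This is the central difficulty, and I would handle it by the strong barrier device. First construct one function $h$, continuous and $p$-superharmonic in $\Omega$, with $h\asymp\min\{\delta_\Gamma,1\}^{\alpha_{\mathrm{ss}}}$ and
\[
-\laplacian_{p,w}h\ge c\,\delta_\Gamma^{\alpha_{\mathrm{ss}}(p-1)-p}\,w
\]
near $\Gamma$. Build $h$ as $\sum_k 2^{-k\alpha_{\mathrm{ss}}}\phi_k$, where $\phi_k$ are capacitary-type potentials of layers, or via $h=\sum_k 2^{-k\epsilon}v_k^{\theta}$ with concave powers. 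Choosing $\alpha_{\mathrm{ss}}<\alpha_{\mathrm{bd}}$ makes the geometric series converge thanks to the boundary decay from Step 2. Taking $\theta<1$ concave powers of $p$-superharmonic functions gives a positive "strong" term $-\laplacian_{p,w}(v^\theta)\gtrsim \theta(1-\theta)v^{\theta(p-1)-p}|\nabla v|^p w$, which must dominate the cross terms. Then compare $\nu$ with the measure $\delta_\Gamma^{\alpha_{\mathrm{ss}}(p-1)-p}w$ locally. On each Whitney ball, Morrey growth \eqref{eqn:morrey} does not give pointwise domination, so instead set $S=A h+\sum_k$ local corrections. More concretely, take $S=\sup$ or $\sum$ of $A\,h$ and local solutions $\tilde v_k$ supported in Whitney neighbourhoods. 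The strong lower bound of $-\laplacian_{p,w}h$ absorbs the error created when adding local solutions, via the elementary inequality for $|a+b|^{p-2}(a+b)$ and Caccioppoli/gradient bounds from Lemma \ref{lem:weak-harnack}. Verifying this absorption is the step I expect to be hardest. The needed smallness comes from $\alpha_{\mathrm{ss}}<\beta$: it makes $\sum_{j\ge k}2^{-j\beta}\lesssim 2^{-k\alpha_{\mathrm{ss}}}$ dominated by $h$ in layer $k$.

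\emph{Step 4 (bounds).} The upper bound in \eqref{eqn:barrier2} follows by summing the bounds of Step 2. Layers with $2^{-k}\ge\delta_\Gamma(x)$ contribute $C2^{-k(\beta-\alpha_{\mathrm{bd}})}\delta_\Gamma^{\alpha_{\mathrm{bd}}}$, and the others contribute $C2^{-k\beta}$. Both sums are $\lesssim\min\{\delta_\Gamma,1\}^{\alpha_{\mathrm{ss}}}$, using $\alpha_{\mathrm{ss}}<\alpha_{\mathrm{bd}}$ and $\alpha_{\mathrm{ss}}<\beta$. The lower bound comes from the $h$ component, whose lower estimate is derived from weak Harnack chains along Whitney balls; when $\nu\ne0$, $S\ge Ah$. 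Continuity follows from Lemma \ref{lem:interior} applied to each piece and local uniform convergence of the series. Finally, undo the scaling to restore the factors $M^{1/(p-1)}D^{\beta-\alpha_{\mathrm{ss}}}$.
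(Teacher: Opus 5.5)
There is a genuine gap. Step 3 carries the whole content of the proposition, and it never produces a supersolution.

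For $p\ne2$ the sum of $p$-supersolutions need not be a supersolution. The supremum fails even for $p=2$: consider $|x_1|=\max\{x_1,-x_1\}$. Hence none of $h=\sum_k2^{-k\alpha_{\mathrm{ss}}}\phi_k$, $\sum_k2^{-k\epsilon}v_k^{\theta}$, or $Ah+\sum_k\tilde v_k$ is known to satisfy \eqref{eqn:barrier}.

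The proposed absorption cannot repair this. The defect $-\laplacian_{p,w}(Ah+\tilde v)+\laplacian_{p,w}(Ah)+\laplacian_{p,w}\tilde v$ is the divergence of the field $w\bigl(|a+b|^{p-2}(a+b)-|a|^{p-2}a-|b|^{p-2}b\bigr)$, with $a=A\nabla h$ and $b=\nabla\tilde v$. Pointwise or Caccioppoli-type bounds on this field give no lower bound on its distributional divergence. So it cannot be dominated by $c\,\delta_\Gamma^{\alpha_{\mathrm{ss}}(p-1)-p}w$. The concave-power term $v^{\theta(p-1)-p}|\nabla v|^pw$ has no pointwise lower bound either, and $\nu$ may be singular with respect to $w$.

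Two smaller claims are also wrong. First, the zero extension of $v_k$ from $U_k$ is not a supersolution across $\partial U_k\cap\Omega$. Second, the bound $v_k\ge c2^{-k\beta}$ on $\Omega_k$ fails, because \eqref{eqn:morrey} is only an upper bound and $\nu_k$ may vanish on most of $\Omega_k$. The lower bound in \eqref{eqn:barrier2} scales with $M^{1/(p-1)}$, not with $\nu$, so it cannot come from potentials of $\nu$ at all.

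The missing idea is to glue by minima, which is what Lemma \ref{lem:gluing} provides. The paper proceeds as follows.
\begin{enumerate}
\item It normalizes so that the Morrey constant is $\epsilon^{p-1}$, with $\epsilon$ small, and $\spt\nu\subset\{\delta_\Gamma\le\theta/2\}$.
\item For $B=B(\xi,R)$ with $\xi\in\Gamma$ and $R\le1$, it solves in $\Omega\cap B$ with datum $\mathbf 1_{B\cap\{\delta_\Gamma>\theta^2R\}}\nu$, which satisfies \eqref{eqn:full}. The prescribed boundary values lie between $\frac14(\theta R)^{\alpha_{\mathrm{ss}}}$ and $R^{\alpha_{\mathrm{ss}}}$; these values, not $\nu$, supply the lower bound.
\item Lemma \ref{lem:maximum}, Lemma \ref{lem:boundary} (using $\alpha_{\mathrm{ss}}<\alpha_{\mathrm{bd}}$ and small $\theta$), and $R^\beta\le R^{\alpha_{\mathrm{ss}}}$ (from $\alpha_{\mathrm{ss}}<\beta$) give the drop $u_B\le\frac12(\theta R)^{\alpha_{\mathrm{ss}}}$ on $\Omega\cap B(\xi,2\theta R)$.
\item Truncating at $R^{\alpha_{\mathrm{ss}}}$ and extending by that constant gives a global supersolution of the homogeneous equation.
\item Taking minima over a covering at scale $\theta^k$, and then over $k$ with $\delta_\Gamma<\theta^k$, defines $S$.
\end{enumerate}
The drop ensures that near every point only two adjacent scales can attain the minimum. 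Every function attaining it lies below its truncation level and has datum exactly $\nu$ nearby. Lemma \ref{lem:gluing} then yields \eqref{eqn:barrier}, and the two-sided bounds are read off scale by scale. No superposition, and hence no absorption, is needed.
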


We record the minimum principle used to join the local supersolutions.
\begin{lemma}\label{lem:gluing}
Let $\mu$ be a signed locally finite Radon measure on $\Omega$.
If $v_1,v_2\in H^{1,p}_{\loc}(\Omega;w)\cap C(\Omega)$ satisfy
$-\laplacian_{p,w}v_i\ge\mu$ in $\Omega$ for $i=1,2$, then
\[
-\laplacian_{p,w}\min\{v_1,v_2\}\ge\mu\quad\text{in }\Omega.
\]
\end{lemma}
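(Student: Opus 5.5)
The plan is to prove Lemma \ref{lem:gluing} directly from the weak formulation, by testing with nonnegative functions truncated according to which of $v_1,v_2$ is smaller. Set $v=\min\{v_1,v_2\}$. Since $v_1,v_2\in H^{1,p}_{\loc}(\Omega;w)$, the lattice property of weighted Sobolev spaces gives $v\in H^{1,p}_{\loc}(\Omega;w)$, with $\nabla v=\nabla v_1$ a.e.\ on $\{v_1\le v_2\}$ and $\nabla v=\nabla v_2$ a.e.\ on $\{v_1>v_2\}$. Fix $\varphi\in C_c^\infty(\Omega)$ with $\varphi\ge0$, and for $\varepsilon>0$ define
\[
\eta_\varepsilon:=\min\Bigl\{1,\frac{(v_2-v_1)_+}{\varepsilon}\Bigr\}\in[0,1].
\]
Because $v_1,v_2$ are continuous, $\eta_\varepsilon$ is continuous and lies in $H^{1,p}_{\loc}$. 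Its gradient is $\varepsilon^{-1}\nabla(v_2-v_1)$ on $\{0<v_2-v_1<\varepsilon\}$ and vanishes a.e.\ elsewhere.

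We test the inequality for $v_1$ with $\varphi(1-\eta_\varepsilon)$ and the inequality for $v_2$ with $\varphi\eta_\varepsilon$. Both are nonnegative, compactly supported and continuous, and lie in $H^{1,p}\cap L^\infty$. First one checks that such functions are admissible test functions when the measure is a signed Radon measure. For this, approximate them by mollification; continuity gives uniform convergence on compact sets, so $\int\cdot\,d\mu$ passes to the limit, and the mollifications converge in $H^{1,p}(w)$ by the density built into the definition of the space. Adding the two inequalities, the right-hand sides recombine to $\int\varphi\,d\mu$. Write $a_i=|\nabla v_i|^{p-2}\nabla v_i$. The left-hand side equals
\[
\int \bigl(a_1(1-\eta_\varepsilon)+a_2\eta_\varepsilon\bigr)\cdot\nabla\varphi\,dw
+\frac1\varepsilon\int_{\{0<v_2-v_1<\varepsilon\}}\varphi\,(a_2-a_1)\cdot\nabla(v_2-v_1)\,dw .
\]
The second term is nonnegative by monotonicity of $\xi\mapsto|\xi|^{p-2}\xi$. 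Hence
\[
\int \bigl(a_1(1-\eta_\varepsilon)+a_2\eta_\varepsilon\bigr)\cdot\nabla\varphi\,dw\ \ge\ \int\varphi\,d\mu .
\]

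Now let $\varepsilon\to0$. Then $\eta_\varepsilon\to\chi_{\{v_2>v_1\}}$ pointwise, so the integrand converges to $a_1\chi_{\{v_1\ge v_2\}}+a_2\chi_{\{v_2>v_1\}}$. On $\{v_1=v_2\}$ we have $\nabla v_1=\nabla v_2$ a.e., so this limit equals $|\nabla v|^{p-2}\nabla v$ a.e. Dominated convergence applies, since $|a_i|\in L^{p'}_{\loc}(w)$. This yields $\int|\nabla v|^{p-2}\nabla v\cdot\nabla\varphi\,dw\ge\int\varphi\,d\mu$, which is the claim.

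The main obstacle is the admissibility of the non-smooth test functions $\varphi\eta_\varepsilon$ against a signed measure $\mu$ that need not be absolutely continuous with respect to $w$. This is exactly where the continuity hypothesis on $v_i$ is used. Mollification gives uniform convergence on $\spt\varphi$ together with convergence of gradients in $L^p(w)$. If this smoothing turns out to be delicate for weighted spaces, I would fall back on the fact that continuous $H^{1,p}_{\loc}(\Omega;w)$ functions are limits of smooth functions converging both locally uniformly and in norm. That fact follows from the definition of $H^{1,p}$ as a completion combined with a partition of unity argument.
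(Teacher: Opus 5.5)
Your strategy is the same truncation argument the paper uses: test each inequality with $\varphi$ times a Lipschitz cutoff of $v_2-v_1$, discard the cutoff-derivative term by monotonicity, and sharpen the cutoff. However, you have paired the test functions with the wrong inequalities, and as written the argument breaks in two places.

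First, $\eta_\varepsilon$ is close to $1$ where $v_2>v_1$, i.e.\ where $v_1$ is the minimum, so $\varphi\eta_\varepsilon$ has to be used in the inequality for $v_1$. With your pairing the cross term is
\[
B_\varepsilon=\frac1\varepsilon\int_{\{0<v_2-v_1<\varepsilon\}}\varphi\,(a_2-a_1)\cdot\nabla(v_2-v_1)\,dw\ \ge\ 0 .
\]
It sits on the larger side of $A_\varepsilon+B_\varepsilon\ge\int\varphi\,d\mu$. Discarding a nonnegative term from that side is not legitimate, so $A_\varepsilon\ge\int\varphi\,d\mu$ does not follow.

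Second, your integrand $a_1(1-\eta_\varepsilon)+a_2\eta_\varepsilon$ converges to $a_1$ on $\{v_1>v_2\}$ and to $a_2$ on $\{v_2>v_1\}$. That is the flux of $\max\{v_1,v_2\}$, not of $v=\min\{v_1,v_2\}$, which contradicts your own correct description of $\nabla v$ at the start. With all inequalities reversed, your computation is in fact the standard proof that the maximum of two subsolutions is a subsolution.

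The repair is to swap the pairing: test the inequality for $v_1$ with $\varphi\eta_\varepsilon$ and the one for $v_2$ with $\varphi(1-\eta_\varepsilon)$. The cross term then equals $-B_\varepsilon\le0$ and can be dropped, giving $\int\bigl(a_1\eta_\varepsilon+a_2(1-\eta_\varepsilon)\bigr)\cdot\nabla\varphi\,dw\ge\int\varphi\,d\mu$. The integrand converges to $a_1\chi_{\{v_2>v_1\}}+a_2\chi_{\{v_2\le v_1\}}$, which equals $|\nabla v|^{p-2}\nabla v$ a.e.\ because $\nabla v_1=\nabla v_2$ a.e.\ on $\{v_1=v_2\}$, and dominated convergence concludes. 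This is exactly the paper's proof, which pairs $\varphi\eta_a(v_2-v_1)$ with $v_1$ and notes that the $\eta_a'$ term is \emph{nonpositive}.

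Your attention to admissibility of continuous compactly supported $H^{1,p}$ test functions against a signed measure goes beyond what the paper writes. Rely on your fallback approximation, converging both in norm and locally uniformly. Plain mollification converges in $H^{1,p}(w)$ for $A_p$ weights but is not automatic for a general $p$-admissible weight.
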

\begin{proof}
We use the truncation argument of \cite[Lemma 3.1]{Hara2024}.
Let $0\le\varphi\in C_c^\infty(\Omega)$, and let $\eta_a$ be a
nondecreasing Lipschitz function equal to zero on $(-\infty,-a]$
and to one on $[a,\infty)$.
Test the supersolution inequalities with
$\varphi\eta_a(v_2-v_1)$ and $\varphi(1-\eta_a(v_2-v_1))$,
respectively. The measure terms sum to $\int_\Omega\varphi\,d\mu$,
and the term containing $\eta_a'$ is nonpositive by monotonicity.
Discarding this term and letting $a\downarrow0$ gives the required
supersolution inequality.
\end{proof}

\begin{proof}[Proof of Proposition \ref{prop:barrier}]
Choose $0 < \theta < 1/16$ so that
$\theta^{\alpha_{ \mathrm{ss} }} < 1/8$ and
$C(2\theta)^{\alpha_{ \mathrm{bd} }} < \theta^{\alpha_{ \mathrm{ss} }}/8$.

Put $R_{0} := 2\|\delta_{\Gamma}\|_{L^{\infty}(\spt\nu)}/\theta$. Under the changes of scale
$x = R_{0}x'$ and $S(x) = M^{1/(p-1)} R_{0}^{\beta}S'(x')/\epsilon$, the new weight
is $w'(x') := w(R_{0}x')$ and the new datum is
\[
\nu'(E) := R_{0}^{p - n}
\left(\frac{\epsilon}{M^{1/(p-1)} R_{0}^{\beta}}\right)^{p - 1}\nu(R_{0}E).
\]
Since $(p - 1)\beta = p - \lambda$, its Morrey growth coefficient is at most
$\epsilon^{p-1}$. Its support lies in $\{\dist(\cdot,\Gamma') \le \theta/2\}$.
Here $\Gamma':=R_0^{-1}\Gamma$. Omit the primes in this normalization.

For $B := B(\xi, R)$, $\xi \in \Gamma$, $R \le 1$, set
\[
\nu_{B} := \mathbf{1}_{\Omega\cap B \cap \{\delta_{\Gamma} > \theta^{2}R\}}\nu.
\]
This measure satisfies the ball bound in \eqref{eqn:full}, with
$\nu_B(B(x,r)\cap\Omega)$ on the left and growth coefficient
$C_{\theta}\epsilon^{p-1}$.
Indeed, if a ball of radius $r < \theta^{2}R/8$ meets its support, recenter
at a support point and enlarge the radius to $2r$. The enlarged ball is
admissible in \eqref{eqn:morrey}. For $\theta^{2}R/8 \le r \le 8\kappa R$,
cover the support in the ball by balls of radius $\theta^{2}R/16$ with
bounded overlap. Summation and doubling give
\[
\nu_{B}(B(x, r)\cap\Omega) \le C \epsilon^{p - 1}(\theta^{2}R)^{- \lambda} w(B(x, C r)) \le C_{\theta} \epsilon^{p - 1} r^{- \lambda} w(B(x, r)).
\]
The upper range $8\kappa R$ is the one in \eqref{eqn:full}.

Choose smooth boundary values $\eta_{B}$ between
$(1 / 4) (\theta R)^{\alpha_{ \mathrm{ss} }}$ and $R^{\alpha_{ \mathrm{ss} }}$, equal to the former in
$(1 / 2) B$ and to the latter near $\Omega \setminus B$.
By \eqref{eqn:trace}, $\nu_B\in(H^{1,p}_0(\Omega\cap B;w))^*$.
The variational problem has a unique solution
$u_{B} - \eta_{B} \in H^{1, p}_{0}(\Omega \cap B; w)$.
Comparison and \eqref{eqn:maximum} give
\[
\frac{1}{4} (\theta R)^{\alpha_{ \mathrm{ss} }} \le u_{B} \le R^{\alpha_{ \mathrm{ss} }} + C_{\theta}\epsilon R^{\beta}.
\]
Choose $\epsilon$ small enough that this is at most $2R^{\alpha_{ \mathrm{ss} }}$.
Since $R^{\beta} \le R^{\alpha_{ \mathrm{ss} }}$, \eqref{eqn:boundary} gives, after
one further decrease of $\epsilon$,
\begin{equation}\label{eqn:bound_of_u_b_1}
\frac{1}{4} (\theta R)^{\alpha_{ \mathrm{ss} }} \le u_{B} \le 2R^{\alpha_{ \mathrm{ss} }},
\end{equation}
\begin{equation}\label{eqn:bound_of_u_b_2}
\sup_{\Omega \cap B(\xi, 2\theta R)}u_{B}
\le \frac{1}{4} (\theta R)^{\alpha_{ \mathrm{ss} }} 
+ \frac{1}{8} (\theta R)^{\alpha_{ \mathrm{ss} }}
+ \frac{1}{8} (\theta R)^{\alpha_{ \mathrm{ss} }}
=
\frac{1}{2} (\theta R)^{\alpha_{ \mathrm{ss} }}.
\end{equation}
Interior regularity gives continuity in $\Omega\cap B$.
At spherical boundary points in $\Omega$, the exterior ball condition gives
continuity with boundary value $R^{\alpha_{\mathrm{ss}}}$.
Define
\[
\widehat u_B(x)=
\begin{cases}
\min\{u_B(x),R^{\alpha_{\mathrm{ss}}}\},&x\in\Omega\cap B,\\
R^{\alpha_{\mathrm{ss}}},&x\in\Omega\setminus B.
\end{cases}
\]
The matching Sobolev boundary values give
$\widehat u_B\in H^{1,p}_{\loc}(\Omega;w)\cap C(\Omega)$.
For $0\le\varphi\in C_c^\infty(\Omega)$, test the equation for $u_B$ with
$\varphi\min\{(R^{\alpha_{\mathrm{ss}}}-u_B)_+/t,1\}$.
This test has zero Sobolev values on the spherical boundary.
The term from differentiating the truncation is nonpositive, so letting
$t\downarrow0$ shows that $-\laplacian_{p,w}\widehat u_B\ge0$ in $\Omega$.
On the open set $\{\widehat u_B<R^{\alpha_{\mathrm{ss}}}\}$,
the function equals $u_B$ and retains its original equation.

For each $k\ge0$, choose $\{\xi_{k,j}:j\in J_k\}\subset\Gamma$ such that
the balls $B(\xi_{k,j},\theta^k)$ are locally finite and
\[
\{x\in\Omega:\delta_\Gamma(x)\le\theta^{k+1}\}
\subset\bigcup_{j\in J_k}B(\xi_{k,j},2\theta^{k+1}).
\]
Set $v_k=\min_{j\in J_k}\widehat u_{B(\xi_{k,j},\theta^k)}$.
Locally, this is a minimum of finitely many functions and the constant
$\theta^{k\alpha_{\mathrm{ss}}}$. Thus $v_k$ is continuous, belongs to
$H^{1,p}_{\loc}(\Omega;w)$, and is a supersolution of the homogeneous
equation by Lemma \ref{lem:gluing}. Moreover,
\begin{equation}\label{eqn:scales1}
\frac14\theta^{(k+1)\alpha_{\mathrm{ss}}}
\le v_k\le\theta^{k\alpha_{\mathrm{ss}}},
\end{equation}
\begin{equation}\label{eqn:scales2}
v_k\le\frac12\theta^{(k+1)\alpha_{\mathrm{ss}}},
\end{equation}
whenever $\delta_\Gamma\le\theta^{k+1}$.
At any point where $v_k<\theta^{k\alpha_{\mathrm{ss}}}$, all functions
attaining the minimum agree locally with their untruncated solutions.
The other functions can be omitted in a neighborhood of that point.
Lemma \ref{lem:gluing} therefore gives
\begin{equation}\label{eqn:scales3}
-\laplacian_{p,w}v_k\ge
\mathbf1_{\{\delta_\Gamma>\theta^{k+2}\}}\nu,
\end{equation}
in the open set $\{v_k<\theta^{k\alpha_{\mathrm{ss}}}\}$.

Define on all of $\Omega$
\[
S(x)=\min\bigl(\{1\}\cup
\{v_k(x):k\ge0,\ \delta_\Gamma(x)<\theta^k\}\bigr).
\]
In particular, $S=1$ for $\delta_\Gamma\ge1$.
For $k\ge1$ and $\theta^{k+1}<\delta_\Gamma<\theta^k$,
\eqref{eqn:scales1} and $\theta^{\alpha_{\mathrm{ss}}}<1/8$ exclude
all indices $i\le k-2$, so $S=\min\{v_{k-1},v_k\}$.
Here \eqref{eqn:scales2} gives
\[
v_{k-1}\le\tfrac12\theta^{k\alpha_{\mathrm{ss}}}
<\theta^{k\alpha_{\mathrm{ss}}}.
\]
Thus every function attaining this minimum lies strictly below its
truncation level. Both right-hand sides in \eqref{eqn:scales3} equal
$\nu$ in this annulus, and Lemma \ref{lem:gluing} gives
$-\laplacian_{p,w}S\ge\nu$ there.

If $\delta_\Gamma(x)=\theta^k$, $k\ge1$, then
$v_k(x)=\theta^{k\alpha_{\mathrm{ss}}}$, whereas
$v_{k-1}(x)\le\tfrac12\theta^{k\alpha_{\mathrm{ss}}}$.
Continuity and the strict separation from the remaining scales imply
$S=v_{k-1}$ in a neighborhood of $x$.
Equation \eqref{eqn:scales3} gives $-\laplacian_{p,w}S\ge\nu$ in that
neighborhood as well, including any mass on the level set.
On $\{\delta_\Gamma>\theta\}$ we have $S=v_0$, also across
$\delta_\Gamma=1$, since $v_0=1$ for $\delta_\Gamma\ge1$.
This is a supersolution of the homogeneous equation, and $\nu$ vanishes
there because $\spt\nu\subset\{\delta_\Gamma\le\theta/2\}$.
These local representations prove
$S\in H^{1,p}_{\loc}(\Omega;w)\cap C(\Omega)$ and \eqref{eqn:barrier}.

By construction, $C^{-1}\min\{\delta_{\Gamma}, 1\}^{\alpha_{ \mathrm{ss} }}\le S\le
C\min\{\delta_{\Gamma}, 1\}^{\alpha_{ \mathrm{ss} }}$.
Returning to the original variables proves \eqref{eqn:barrier2}.
\end{proof}

\begin{remark}
When the datum exponent $\beta$ coincides with the optimal homogeneous
boundary decay exponent $\alpha$, the endpoint $\alpha_{ \mathrm{ss} }=\beta=\alpha$ can fail.
Let
\[
\Omega=\{(r, \phi) :0 < r < 1 ,\ - \omega < \phi < \omega\},
\qquad
\frac{\pi}{2} < \omega < \pi,
\]
and let $\alpha = \pi / (2 \omega)$.
The optimal boundary decay exponent at the origin is $\alpha$, 
as can be seen from the harmonic function $r^{\alpha} \cos( \alpha \phi)$, which vanishes on the two sides of the sector.
The function
\[
u(r,\phi)
=
\frac{1}{2 \alpha} r^{\alpha} \log \frac{1}{r} \cos(\alpha \phi)
\]
satisfies
\[
-\Delta u
= r^{\alpha - 2} \cos(\alpha \phi)
\quad\text{in } \Omega.
\]
Along the line $\phi = 0$, we have $\delta(r, 0) = r$ for sufficiently small $r$, and
\[
\frac{u(r, 0)}{ \delta(r, 0)^{ \alpha } }
=\frac{1}{2\alpha} \log \frac{1}{r}.
\]
Here $p=2$, $w=1$, and the datum satisfies \eqref{eqn:morrey}
with $\lambda=2-\alpha$, hence $\beta=\alpha$: for $r'<\delta(x)/2$,
its density on $B(x,r')$ is at most $C|x|^{\alpha-2}\le C(r')^{\alpha-2}$.
The function $u$ vanishes on the whole boundary of the truncated sector.
Thus \eqref{eqn:main-bound} fails at $\alpha_{ \mathrm{ss} }=\beta=\alpha$.
\end{remark}

For the full-boundary results below, take $\Gamma=\partial\Omega$ and write $\delta=\delta_\Gamma$.


For $0<\alpha\le1$ and $F\subset\R^n$, write
\[
[u]_{C^\alpha(F)}:=\sup_{x,z\in F,\ x\ne z}
\frac{|u(x)-u(z)|}{|x-z|^\alpha}.
\]

\begin{theorem}\label{thm:local}
Assume that \eqref{eqn:cdc} holds on $\partial\Omega$.
Let $0\le\lambda<p$ and let $\nu$
be a signed locally finite Radon measure on $\Omega$ satisfying
\eqref{eqn:morrey} with $\Gamma=\partial\Omega$ and
$\|\delta\|_{L^\infty(\spt\nu)}<\infty$.
Then there exists a bounded weak solution
$u\in H^{1,p}_{\loc}(\Omega;w)\cap C(\cl\Omega)$ to
\[
\begin{cases}
-\laplacian_{p,w}u=\nu & \text{in }\Omega,\\
u=0 & \text{on }\partial\Omega.
\end{cases}
\]
This solution satisfies \eqref{eqn:main-bound} with
$\Gamma=\partial\Omega$ and $\alpha_{\mathrm{ss}}$ as in
Proposition \ref{prop:barrier}. Moreover,
\begin{equation}\label{eqn:holder}
[u]_{C^{\alpha_{\mathrm{gh}}}(\cl\Omega)}
\le CM^{1/(p-1)}\|\delta\|_{L^\infty(\spt\nu)}^{\beta-\alpha_{\mathrm{gh}}},
\end{equation}
where
$\alpha_{\mathrm{gh}}=\min\{\alpha_{\mathrm{in}},\alpha_{\mathrm{ss}}\}$
and $\alpha_{\mathrm{in}}$ is the exponent in
Lemma \ref{lem:interior}.
The constants depend only on the structural constants, $\lambda$
and $\gamma$.
If $\nu\ge0$, the solution can be chosen to be minimal among
nonnegative solutions.
\end{theorem}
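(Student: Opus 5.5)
Throughout, set $D:=\|\delta\|_{L^\infty(\spt\nu)}$ and assume $D>0$; the zero measure is covered by the zero solution. By the scaling used in the proof of Proposition \ref{prop:barrier}, it suffices to treat $M=1$, $D=1$. Split $\nu=\nu_+-\nu_-$. Each part satisfies \eqref{eqn:morrey}, and Proposition \ref{prop:barrier} applied to $|\nu|$ gives a barrier $S$ with $-\laplacian_{p,w}S\ge|\nu|$ and $S\asymp\min\{\delta,1\}^{\alpha_{\mathrm{ss}}}$.

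I would construct $u$ by exhaustion. Take bounded open $\Omega_j\uparrow\Omega$ with $\Omega_j\Subset\Omega$, for example $\Omega_j=\{\delta>1/j\}\cap B(0,j)$. Put $\nu_j:=\mathbf 1_{\Omega_j}\nu$. This measure has compact support in $\Omega$, so it satisfies \eqref{eqn:full} with some constant, and by \eqref{eqn:trace} it lies in the dual of $H^{1,p}_0(\Omega\cap B(0,\rho);w)$. Let $u_j\in H^{1,p}_0(\Omega\cap B(0,\rho_j);w)$ be the variational solution of $-\laplacian_{p,w}u_j=\nu_j$, with $\rho_j\to\infty$.

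The key step is the uniform bound $-S\le u_j\le S$. I would get it from comparison of the solution $u_j$ with the supersolution $S$ of $-\laplacian_{p,w}S\ge\nu_j$ (and of $-S$ with the subsolution side). The test function is $(u_j-S)_+$, which has zero boundary values because $u_j\in H^{1,p}_0$ and $S\ge0$ is continuous. This gives \eqref{eqn:main-bound} uniformly in $j$. For $\nu\ge0$, comparison also shows that $u_j$ is nondecreasing in $j$, and the limit is then minimal among nonnegative solutions: every nonnegative solution $v$ dominates each $u_j$ by the same comparison on $\Omega\cap B(0,\rho_j)$. For signed $\nu$ I expect monotonicity to fail. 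In that case I would extract a subsequence using local equicontinuity from Lemma \ref{lem:interior} together with Caccioppoli and the gradient bound of Lemma \ref{lem:weak-harnack}. Passing to the limit in the weak formulation needs a.e. convergence of gradients. I expect this to be the main technical obstacle. I would handle it with the Boccardo--Murat truncation argument, since $\nu_j$ are measures controlled in the local duals uniformly by \eqref{eqn:trace}.

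For the Hölder estimate \eqref{eqn:holder}, take $x,z\in\Omega$ with $r=|x-z|$, and assume $\delta(x)\ge\delta(z)$.

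Case 1: $r\ge\delta(x)/4$. Then $\delta(x),\delta(z)\le 5r$, and \eqref{eqn:main-bound} gives $|u(x)-u(z)|\le C\min\{r,1\}^{\alpha_{\mathrm{ss}}}$. If $r\le1$ this is at most $Cr^{\alpha_{\mathrm{gh}}}$. If $r>1$, boundedness of $u$ gives $|u(x)-u(z)|\le C\le Cr^{\alpha_{\mathrm{gh}}}$.

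Case 2: $r<\delta(x)/4$. Set $R=\min\{\delta(x),1\}/2$ and apply Lemma \ref{lem:interior} on $B(x,2R)\subset\Omega$. The datum there satisfies \eqref{eqn:full} up to scale $R$, by the covering argument from the proof of Proposition \ref{prop:barrier}. Using $\osc_{B(x,2R)}u\le C\min\{\delta(x),1\}^{\alpha_{\mathrm{ss}}}\le CR^{\alpha_{\mathrm{ss}}}$ by \eqref{eqn:main-bound}, together with $R^\beta\le R^{\alpha_{\mathrm{ss}}}$, we get $|u(x)-u(z)|\le C(r/R)^{\alpha_{\mathrm{in}}}R^{\alpha_{\mathrm{ss}}}\le Cr^{\alpha_{\mathrm{gh}}}$ whenever $r\le R$. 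If instead $r>R$, then $\delta(x)>4r>4R$ forces $R=1/2$, and the bound follows from boundedness as in Case 1.

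Continuity up to $\partial\Omega$ with $u=0$ there follows from \eqref{eqn:main-bound}. Rescaling back produces the factor $M^{1/(p-1)}D^{\beta-\alpha_{\mathrm{gh}}}$.
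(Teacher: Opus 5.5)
Your plan follows the paper's proof. It uses the barrier of Proposition~\ref{prop:barrier} for $|\nu|$, exhaustion by $\Omega\cap B(0,\rho_j)$ with truncated data, the bound $|u_j|\le S$, local compactness, minimality by monotone comparison, and a case split for the H\"older seminorm. Two steps, however, do not work as written.

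\textbf{The comparison $|u_j|\le S$.} It is not justified by ``$u_j\in H^{1,p}_0$ and $S\ge0$ is continuous.'' The barrier is only in $H^{1,p}_{\loc}(\Omega;w)$ and may have infinite energy near $\partial\Omega$. For instance, the profile $\delta^{\alpha_{\mathrm{ss}}}$ does, for $w=1$ near a flat boundary piece, when $p(1-\alpha_{\mathrm{ss}})\ge1$. Its supersolution inequality is also available only for compactly supported tests. Since $S=0$ on $\partial\Omega$, its continuity gives no room there. Sobolev zero boundary values of $u_j$ do not force $u_j<\epsilon$ near $\partial\Omega$ either. So $(u_j-S)_+$ is not known to be an admissible test.

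The missing ingredient is boundary continuity of the approximants. Take $\rho_j>j$, so that $\nu_j$ vanishes near $\partial(\Omega\cap B(0,\rho_j))$. This set satisfies the capacity density condition at every boundary point, since its complement contains $\R^n\setminus\Omega$ or $\R^n\setminus B(0,\rho_j)$. Lemma~\ref{lem:boundary} then gives $u_j\in C(\cl{\Omega\cap B(0,\rho_j)})$ with zero boundary values. Hence $(\pm u_j-S-\epsilon)_+$ has compact support, comparison applies, and you let $\epsilon\downarrow0$. You need the same fact for your monotonicity and minimality comparisons, where the competitor $v$ is also only locally Sobolev.

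\textbf{The radius in Case~2.} With $R=\min\{\delta(x),1\}/2$ you get $B(x,2R)=B(x,\delta(x))$ when $\delta(x)\le1$, a ball touching $\partial\Omega$. The restriction of $\nu$ to it need not satisfy \eqref{eqn:full}, because \eqref{eqn:morrey} lets $\nu$ accumulate at the boundary. On a half-space with $w=1$, the measure $\mathbf 1_{\{\delta<1\}}\delta^{-\lambda}\,dx$ with $1<\lambda<p$ satisfies \eqref{eqn:morrey} with $M=1$. Yet its mass in $B(x,\delta(x))\cap B(\xi,\rho)$, where $\xi$ is the point of tangency, is at least of order $\rho^{n+1-2\lambda}$. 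This is much larger than $\rho^{n-\lambda}$ as $\rho\to0$. The covering argument of Proposition~\ref{prop:barrier} needs a margin from the boundary; there the data $\nu_B$ are restricted to $\{\delta_\Gamma>\theta^2R\}$.

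The fix is the paper's choice $R=\min\{\delta(x),1\}/4$. Then $\delta>2R$ on $B(x,2R)$. Recentering small balls and covering by balls of radius $R/8$ give \eqref{eqn:full} with $R_0=R$ and constant $CM$. Case~2 still gives $r<R$ when $\delta(x)\le1$, and the range $\delta(x)>1$, $r\ge1/4$ follows from boundedness.

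\textbf{The limit passage.} This is easier than you expect, and Boccardo--Murat is not needed. Since $u_j\to u$ locally uniformly and $\nu_j=\nu$ on each compact set for large $j$, testing with $\eta(u_j-u)$ makes the measure term vanish. The standard monotone-operator argument then gives local strong convergence of gradients.
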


\begin{proof}
If $\nu=0$, take $u=0$. Suppose henceforth that $\nu\ne0$.
By Proposition \ref{prop:barrier}, there exists a nonnegative
$S\in H^{1,p}_{\loc}(\Omega;w)\cap C(\Omega)$ satisfying
\eqref{eqn:barrier2} with $\Gamma=\partial\Omega$ and
\[
-\laplacian_{p,w}S\ge|\nu|\quad\text{in }\Omega.
\]
In particular, $S$ is bounded and extends continuously by zero
to $\partial\Omega$.

For each integer $j\ge1$, consider the Dirichlet problem
\begin{equation}\label{eqn:local-approximation}
\begin{cases}
-\laplacian_{p,w}u_j
=\mathbf1_{\{|x|<j,\ \delta(x)>1/j\}}\nu
& \text{in }\Omega\cap B(0,2j),\\
u_j=0 & \text{on }\partial(\Omega\cap B(0,2j)).
\end{cases}
\end{equation}
The restricted measure has compact support in $\Omega\cap B(0,2j)$.
A finite covering of its support by interior balls, together with
\eqref{eqn:morrey} and \eqref{eqn:trace}, shows that it belongs to
$(H^{1,p}_0(\Omega\cap B(0,2j);w))^*$.
Thus \eqref{eqn:local-approximation} has a unique weak solution
$u_j\in H^{1,p}_0(\Omega\cap B(0,2j);w)$.
Lemma \ref{lem:interior} gives continuity in $\Omega\cap B(0,2j)$.
The datum vanishes near the boundary of this set.
At boundary points belonging to $\partial\Omega$, its complement
contains $\R^n\setminus\Omega$; at the remaining boundary points,
its complement contains the exterior of $B(0,2j)$.
Consequently, the capacity density condition holds locally at each
boundary point. Lemma \ref{lem:boundary}, applied to the homogeneous
equation, gives continuous zero boundary values for $u_j$.

For $\epsilon>0$, the functions $(\pm u_j-S-\epsilon)_+$ have
compact support in $\Omega\cap B(0,2j)$.
The comparison principle applied to
\eqref{eqn:local-approximation} and \eqref{eqn:barrier} for $|\nu|$
therefore gives, on letting $\epsilon\downarrow0$,
\begin{equation}\label{eqn:local-approximation-bound}
|u_j(x)|\le S(x),
\end{equation}
for all $x\in\Omega\cap B(0,2j)$.

Let $0\le\eta\le1$ belong to $C_c^\infty(\Omega)$.
For all sufficiently large $j$, the right-hand side of
\eqref{eqn:local-approximation} equals $\nu$ in a neighborhood
of $\spt\eta$.
Using $u_j\eta^p$ as a test function in
\eqref{eqn:local-approximation}, we obtain
\begin{equation}\label{eqn:local-approximation-energy}
\int_\Omega\eta^p|\nabla u_j|^p\,dw
\le C\int_\Omega |u_j|^p|\nabla\eta|^p\,dw
+C\int_\Omega\eta^p|u_j|\,d|\nu|.
\end{equation}
By \eqref{eqn:local-approximation-bound}, the right-hand side is
bounded independently of $j$.
Together with \eqref{eqn:interior}, this gives a subsequence and
$u\in H^{1,p}_{\loc}(\Omega;w)\cap C(\Omega)$ such that
\begin{equation}\label{eqn:local-approximation-limit}
\begin{aligned}
u_j&\longrightarrow u &&\text{locally uniformly in }\Omega,\\
\nabla u_j&\rightharpoonup\nabla u
&&\text{weakly in }L^p_{\loc}(\Omega;w).
\end{aligned}
\end{equation}
The standard convergence argument for monotone operators
\cite[Theorem 3.75]{MR2305115} applies here: the equations have
the same datum on every fixed compact subset for sufficiently
large $j$, and the measure term obtained by testing with
$\eta(u_j-u)$ tends to zero by
\eqref{eqn:local-approximation-limit}.
It follows that
\[
\int_\Omega |\nabla u|^{p-2}\nabla u\cdot\nabla\varphi\,dw
=\int_\Omega\varphi\,d\nu,
\]
for all $\varphi\in C_c^\infty(\Omega)$.
Thus $-\laplacian_{p,w}u=\nu$ in $\Omega$.
Passing to the limit in \eqref{eqn:local-approximation-bound}
and using \eqref{eqn:barrier2}, we obtain
\eqref{eqn:main-bound}. This estimate also gives
$u\in C(\cl\Omega)$ and $u=0$ on $\partial\Omega$.

We next prove \eqref{eqn:holder}.
Let $x,z\in\Omega$.
If $|x-z|\ge\|\delta\|_{L^\infty(\spt\nu)}/8$, then
\eqref{eqn:main-bound} gives
\[
|u(x)-u(z)|
\le CM^{1/(p-1)}\|\delta\|_{L^\infty(\spt\nu)}^\beta
\le CM^{1/(p-1)}\|\delta\|_{L^\infty(\spt\nu)}^{\beta-\alpha_{\mathrm{gh}}}
|x-z|^{\alpha_{\mathrm{gh}}}.
\]
Suppose that $|x-z|<\|\delta\|_{L^\infty(\spt\nu)}/8$.
If $\min\{\delta(x),\delta(z)\}\le4|x-z|$, then
$\max\{\delta(x),\delta(z)\}\le5|x-z|$.
Hence \eqref{eqn:main-bound} yields
\[
\begin{aligned}
|u(x)-u(z)|
&\le CM^{1/(p-1)}\|\delta\|_{L^\infty(\spt\nu)}^{\beta-\alpha_{\mathrm{ss}}}
|x-z|^{\alpha_{\mathrm{ss}}}\\
&\le CM^{1/(p-1)}\|\delta\|_{L^\infty(\spt\nu)}^{\beta-\alpha_{\mathrm{gh}}}
|x-z|^{\alpha_{\mathrm{gh}}}.
\end{aligned}
\]

It remains to consider the case
\[
|x-z|<\|\delta\|_{L^\infty(\spt\nu)}/8,
\qquad 4|x-z|<\min\{\delta(x),\delta(z)\}.
\]
Take
\[
R=\frac14\min\{\delta(x),\|\delta\|_{L^\infty(\spt\nu)}\}.
\]
Then $|x-z|<R$ and $B(x,2R)\subset\Omega$.
The restriction of $\nu$ to $B(x,2R)$, extended by zero,
satisfies \eqref{eqn:full} with $R_0=R$ and constant $CM$.
Indeed, balls of radius less than $R/4$ meeting this restriction
can be recentered in $B(x,2R)$ and estimated by
\eqref{eqn:morrey}; larger balls are estimated by covering
$B(x,2R)$ with balls of radius $R/8$ and using doubling.
We may therefore apply \eqref{eqn:interior} to
$-\laplacian_{p,w}u=\nu$ in $B(x,2R)$.
Since $\alpha_{\mathrm{gh}}\le\alpha_{\mathrm{in}}$, this gives
\begin{equation}\label{eqn:local-holder-interior-step}
|u(x)-u(z)|
\le C\left(\frac{|x-z|}{R}\right)^{\alpha_{\mathrm{gh}}}
\left(\osc_{B(x,2R)}u+M^{1/(p-1)}R^\beta\right).
\end{equation}
If $\delta(x)\le\|\delta\|_{L^\infty(\spt\nu)}$, then
$\delta(y)\le6R$ for $y\in B(x,2R)$.
If $\delta(x)>\|\delta\|_{L^\infty(\spt\nu)}$, then
$\|\delta\|_{L^\infty(\spt\nu)}=4R$.
In both cases, \eqref{eqn:main-bound} gives
\[
\osc_{B(x,2R)}u
\le CM^{1/(p-1)}\|\delta\|_{L^\infty(\spt\nu)}^{\beta-\alpha_{\mathrm{ss}}}
R^{\alpha_{\mathrm{ss}}}.
\]
Substituting this estimate into
\eqref{eqn:local-holder-interior-step}, and using
$R\le\|\delta\|_{L^\infty(\spt\nu)}$ and
$\alpha_{\mathrm{gh}}\le\alpha_{\mathrm{ss}}<\beta$, we obtain
\[
|u(x)-u(z)|
\le CM^{1/(p-1)}\|\delta\|_{L^\infty(\spt\nu)}^{\beta-\alpha_{\mathrm{gh}}}
|x-z|^{\alpha_{\mathrm{gh}}}.
\]
Continuity on $\cl\Omega$ now proves \eqref{eqn:holder}.

Finally, suppose that $\nu\ge0$.
The comparison principle for \eqref{eqn:local-approximation} gives
$0\le u_j\le u_{j+1}$ in $\Omega\cap B(0,2j)$.
If $v$ is any nonnegative solution in $\Omega$, comparison on
$\Omega\cap B(0,2j)$ also gives $u_j\le v$ there.
Letting $j\to\infty$, we obtain $u\le v$ in $\Omega$.
Thus $u$ is the minimal nonnegative solution.
\end{proof}

\begin{theorem}\label{thm:local-uniqueness}
Let $\nu$ be a signed locally finite measure on $\Omega$.
Assume that $\Omega$ is bounded, or that $p=2$ and \eqref{eqn:cdc}
holds on $\partial\Omega$. Then \eqref{eqn:DE} has at most one bounded
weak solution in $H^{1,p}_{\loc}(\Omega;w)\cap C(\cl\Omega)$.
\end{theorem}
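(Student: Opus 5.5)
Let $u_1,u_2\in H^{1,p}_{\loc}(\Omega;w)\cap C(\cl\Omega)$ be bounded weak solutions of \eqref{eqn:DE}, both vanishing on $\partial\Omega$. The plan is to test the difference of the two weak formulations with truncations of $u_1-u_2$. The measure terms then cancel, and we are left with the monotonicity inequality
\[
\int_\Omega \bigl(|\nabla u_1|^{p-2}\nabla u_1-|\nabla u_2|^{p-2}\nabla u_2\bigr)\cdot\nabla\varphi\,dw=0 .
\]
The only issue is making the test function admissible: a priori $u_i$ lies only in $H^{1,p}_{\loc}$, and the identity holds only for $\varphi$ with compact support in $\Omega$.

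\emph{Bounded case.} Fix $\epsilon>0$ and set $\varphi_\epsilon=(u_1-u_2-\epsilon)_+$. Since $u_1-u_2$ is continuous on $\cl\Omega$ and vanishes on $\partial\Omega$, the set $\{u_1-u_2\ge\epsilon\}$ is a compact subset of $\Omega$ (here boundedness of $\Omega$ is used). Hence $\varphi_\epsilon$ belongs to $H^{1,p}(\Omega;w)$ with compact support. A standard density argument then allows it as a test function; the measure side is handled by the interior $\nu$-integrability implicit in the weak formulation, and in any case the $\nu$-terms cancel exactly because both solutions carry the same datum. Strict monotonicity of $\xi\mapsto|\xi|^{p-2}\xi$ gives $\nabla\varphi_\epsilon=0$ $w$-a.e. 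Since $\varphi_\epsilon$ has compact support, it is identically zero. Letting $\epsilon\downarrow0$ and exchanging $u_1$ and $u_2$ yields $u_1=u_2$.

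\emph{Unbounded case with $p=2$ and \eqref{eqn:cdc}.} Now $v=u_1-u_2$ is a bounded continuous weak solution of the linear equation $-\divergence(w\nabla v)=0$ in $\Omega$, vanishing on $\partial\Omega$, and we must show $v\equiv0$. This is a Liouville-type statement, and it is the main obstacle. I would argue as follows.
\begin{enumerate*}[label=(\roman*)]
\item Set $K=\sup|v|<\infty$.
\item Fix $\xi\in\partial\Omega$. Apply Lemma \ref{lem:boundary} with $\nu=0$ (so no $M_{\mathrm g}$ error) to $\pm v$, with $m=0$, shifting by a constant to meet the hypothesis $u\ge m$; the oscillation version can also be run with $\sup(v)$ and $\inf(v)$ separately. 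This gives, for every $R$ and every $r\le R/8$,
\[
\sup_{\Omega\cap B(\xi,r)}|v|\le C(r/R)^{\alpha_{\mathrm{bd}}}K .
\]
\item Let $R\to\infty$ with $r$ fixed. This gives $v=0$ on $\Omega\cap B(\xi,r)$ for every $r$, hence $v\equiv0$.
\end{enumerate*}

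To justify step (ii) properly, I would use the one-sided boundary estimate for the nonnegative subsolutions $v_+$ and $v_-$: each satisfies $-\laplacian_{2,w}v_\pm\le0$, since the positive part of a solution is a subsolution, and each vanishes on $\partial\Omega$. Lemma \ref{lem:boundary} with $m=0$ then applies directly to $v_\pm$ on $\Omega\cap B(\xi,R)$, for arbitrary $R$ because $\nu=0$. This yields $\sup_{\Omega\cap B(\xi,r)}v_\pm\le C(r/R)^{\alpha_{\mathrm{bd}}}K$. Every point $x\in\Omega$ lies in $B(\xi,r)$ for some $\xi\in\partial\Omega$ (take $r>\delta(x)$), so letting $R\to\infty$ gives $v\equiv0$.

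Note that this route does not actually use linearity, so the restriction to $p=2$ in the statement suggests that the authors handle the difference differently. For $p\ne2$, $u_1-u_2$ is not a solution of any equation, and the boundary estimate is only available for $u_1$ and $u_2$ separately, not for their difference. My argument sidesteps this precisely because for $p=2$ the difference solves the homogeneous equation; this is the point to check carefully. The remaining technical detail is verifying that $v_\pm$ lies in $H^{1,2}(\Omega\cap B(\xi,R);w)$, as Lemma \ref{lem:boundary} requires. I would obtain this from a Caccioppoli estimate near the boundary, using boundedness of $v$ and the continuous zero boundary values.
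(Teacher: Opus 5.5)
Your proof is correct and follows the paper's argument essentially step for step. In the bounded case you test the difference of the weak formulations with the compactly supported truncations $(u_1-u_2-\epsilon)_+$ and use strict monotonicity; for $p=2$ you apply Lemma \ref{lem:boundary} with zero datum to $v_\pm$, where $v=u_1-u_2$, after a Caccioppoli estimate via the truncations $\eta^2(\pm v-\epsilon)_+$ supplies the required Sobolev regularity and boundary values, and then let $R\to\infty$.

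Your closing remark is slightly off: linearity is used exactly once, to make $u_1-u_2$ a solution of the homogeneous equation, and this is precisely where the paper also uses $p=2$.
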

\begin{proof}
Let $u_1$ and $u_2$ be two such solutions. If $\Omega$ is bounded,
$(u_1-u_2-\epsilon)_+$ has compact support in $\Omega$ for each
$\epsilon>0$, because both solutions vanish continuously on $\partial\Omega$.
Subtracting their weak equations and using this function as a test gives
\[
\int_{\{u_1-u_2>\epsilon\}}
\bigl(|\nabla u_1|^{p-2}\nabla u_1-|\nabla u_2|^{p-2}\nabla u_2\bigr)
\cdot(\nabla u_1-\nabla u_2)\,dw=0.
\]
Strict monotonicity implies $(u_1-u_2-\epsilon)_+=0$.
Letting $\epsilon\downarrow0$ and interchanging $u_1$ and $u_2$ proves uniqueness.

Suppose now that $p=2$. The difference $v=u_1-u_2$ satisfies
\begin{equation}\label{eqn:uniqueness-harmonic}
-\divergence(w\nabla v)=0\quad\text{in }\Omega,
\qquad v=0\quad\text{on }\partial\Omega.
\end{equation}
For $\eta\in C_c^\infty(\R^n)$, the tests
$\eta^2(v-\epsilon)_+$ and $-\eta^2(-v-\epsilon)_+$ have compact
support in $\Omega$. The Caccioppoli estimate for
\eqref{eqn:uniqueness-harmonic}, followed by $\epsilon\downarrow0$, yields
\[
\int_\Omega\eta^2|\nabla v|^2\,dw
\le C\int_\Omega |v|^2|\nabla\eta|^2\,dw.
\]
The same truncations show that $v$ has local zero Sobolev boundary values.
Fix $\xi\in\partial\Omega$ and $r>0$. Applying \eqref{eqn:boundary}
to the positive and negative parts of $v$, with zero datum, gives,
for every $R>8r$,
\[
\sup_{\Omega\cap B(\xi,r)}|v|
\le C(r/R)^{\alpha_{\mathrm{bd}}}\|v\|_{L^\infty(\Omega)}.
\]
The constant is independent of $R$. Hence $v=0$ in
$\Omega\cap B(\xi,r)$ by letting $R\to\infty$.
Since $r>0$ is arbitrary, $u_1=u_2$ in $\Omega$.
\end{proof}

\begin{theorem}\label{thm:capacity-lower-bound}
Fix $0\le\lambda<p$.
Suppose that every nonnegative measure satisfying \eqref{eqn:morrey}
with $\Gamma=\partial\Omega$ and $\|\delta\|_{L^\infty(\spt\nu)}<\infty$ has a nonnegative
weak solution to $-\laplacian_{p,w}u=\nu$ in $\Omega$ satisfying
\eqref{eqn:main-bound}, with fixed constants $C>0$ and
$0<\alpha_{\mathrm{ss}}<\min\{\beta,1\}$.
Then \eqref{eqn:cdc} holds on $\partial\Omega$.
Its constant $\gamma>0$ depends only on the structural constants,
$\lambda$, $C$ and $\alpha_{\mathrm{ss}}$.
\end{theorem}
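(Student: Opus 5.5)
Fix $\xi\in\partial\Omega$ and $R>0$, and put $K:=\cl{B(\xi,R)}\setminus\Omega$. The plan is to test the hypothesis on a measure concentrated near $\xi$ and compare the resulting solution $u$ with the capacitary potential of $K$. The boundary decay \eqref{eqn:main-bound} makes $u$ small near $\xi$, and this is impossible when $K$ has small relative capacity.

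\emph{Choice of datum.} Take $\nu:=R^{-\lambda}w\,\mathbf 1_{A}$, where $A:=\{x\in\Omega\cap B(\xi,R):\delta(x)>\epsilon R\}$ and $\epsilon>0$ is a small parameter fixed later. The Morrey bound \eqref{eqn:morrey} holds with $M\le C_\epsilon$: any ball $B(x,r)$ with $r<\delta(x)/2$ that meets $A$ and has $r\le R$ gives $\nu(B(x,r))\le R^{-\lambda}w(B(x,r))\le r^{-\lambda}w(B(x,r))$. Larger balls are handled by doubling, as in the proof of Proposition \ref{prop:barrier}. We also have $\|\delta\|_{L^\infty(\spt\nu)}\le R$. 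The hypothesis then yields $u\ge0$ with $-\laplacian_{p,w}u=\nu$ and
\[
u\le C_\epsilon R^{\beta-\alpha_{\mathrm{ss}}}\min\{\delta,R\}^{\alpha_{\mathrm{ss}}}.
\]
This bound matters on the set where $\delta\le\epsilon R$.

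\emph{Lower bound from the datum when $K$ is thin.} Argue by contradiction. Suppose $\capacity_{p,w}(K,B(\xi,2R))\le\gamma\,\capacity_{p,w}(\cl{B(\xi,R)},B(\xi,2R))$ with $\gamma$ tiny. Let $\varphi\in C_c^\infty(B(\xi,2R))$ with $\varphi\ge1$ on $K$ be nearly optimal, and set $\psi:=\eta(1-\varphi)_+$, where $\eta$ is a cutoff for $B(\xi,R/2)$. Then $\psi$ is admissible in $H^{1,p}_0(\Omega)$ up to approximation, and it is $\approx1$ on most of $B(\xi,R/2)$ in the capacitary sense. Testing the equation of $u$ with $\psi^p$ gives
\[
R^{-\lambda}\int_A\psi^p\,dw\le\int|\nabla u|^{p-1}|\nabla\psi^p|\,dw .
\]
By the Poincaré/Sobolev inequality \eqref{eqn:sobolev}, small capacity of $K$ forces $\int_{B(\xi,R/2)}\psi^p\,dw\gtrsim w(B(\xi,R))$. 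We must also make sure that $A$ carries a fixed fraction of this mass. For that, pick $\epsilon$ so that the set $\{\delta\le\epsilon R\}\cap B(\xi,R/2)$ has $\psi$-mass at most half; the thin-$K$ assumption lets $\delta$ be small only near $K$, and covering arguments give this. So the left side is $\gtrsim R^{-\lambda}w(B(\xi,R))$.

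\emph{Upper bound from the decay.} Bound the right side by Hölder's inequality. The factor $\nabla\psi$ contributes $\bigl(\gamma\,\capacity_{p,w}(\cl{B(\xi,R)},B(\xi,2R))\bigr)^{1/p}$ plus cutoff terms. The factor $|\nabla u|^{p-1}$ is controlled by Caccioppoli (the gradient bound of Lemma \ref{lem:weak-harnack}) together with $\sup_{B(\xi,R)}u\le C_\epsilon R^\beta$. Recall $\capacity_{p,w}(\cl{B(\xi,R)},B(\xi,2R))\asymp R^{-p}w(B(\xi,R))$.

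The cutoff terms in $\nabla\eta$ do not come with a factor $\gamma$, and this is the step I expect to be the main obstacle. The decay estimate is what should make them small. I would handle it by comparing at two scales. Replace $B(\xi,R)$ by $B(\xi,\rho)$ with $\rho=\theta R$, while keeping the datum at scale $R$. Near $\xi$ the decay gives $\sup_{B(\xi,2\rho)}u\le C\theta^{\alpha_{\mathrm{ss}}}R^\beta$. In contrast, the weak Harnack inequality (Lemma \ref{lem:weak-harnack}) applied in the thin-$K$ region (where $\psi\approx1$, so $u$ is essentially an interior supersolution near $\xi$ at scale $\rho$) forces $u\gtrsim\rho^{\beta}$-type lower bounds only if the datum sits there. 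So I would instead put the datum at scale $\rho$ and its support $A_\rho$ at distance $\sim\rho$ from $\partial\Omega$, on average. The mass balance then gives
\[
u\gtrsim \rho^{\beta}\text{ on a set of positive proportion}.
\]
This contradicts $u\le C(\epsilon\rho)^{\alpha_{\mathrm{ss}}}\rho^{\beta-\alpha_{\mathrm{ss}}}$ once $\epsilon$ is small, provided the comparison constant does not depend on $\epsilon$. Fixing $\epsilon$, and then $\gamma$, in that order should close the argument. The resulting $\gamma$ depends only on the structural constants, $\lambda$, $C$ and $\alpha_{\mathrm{ss}}$.
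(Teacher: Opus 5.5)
Your proposal has a genuine gap, and it is at the step you flag. The proposed two-scale repair does not close it.

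\textbf{Why the flux identity cannot work.} Testing with $\psi^p$ gives an identity that holds for every solution. It can only yield a contradiction if the decay makes the $\nabla\eta$ term small, and it does not. At scale $R$ that term is bounded only by a constant times $R^{-\lambda}w(B(\xi,R))$, the size of the left-hand side. Suppose you shrink $\eta$ to $B(\xi,\theta R)$. There every point of $\Omega$ has $\delta<\theta R$, so the decay gives $u\le C\theta^{\alpha_{\mathrm{ss}}}R^\beta$. Even so, Caccioppoli bounds the cutoff term only by $C\theta^{(p-1)\alpha_{\mathrm{ss}}-p}R^{-\lambda}w(B(\xi,\theta R))$. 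This exceeds the left-hand side, which is $\approx R^{-\lambda}w(B(\xi,\theta R))$, because $(p-1)\alpha_{\mathrm{ss}}<p-\lambda\le p$. Decay at a rate below $\beta$ is invisible to flux balance.

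\textbf{Why the repair is unsupported.} What you really need is a lower bound for $u$ near $\xi$. Your repair takes it from the weak Harnack inequality ``where $\psi\approx1$''. But Lemma \ref{lem:weak-harnack} requires a supersolution on a full ball, and $u$ solves the equation only in $\Omega$. Its zero extension across $K$ is a subsolution, not a supersolution. Turning ``$K$ has small capacity'' into a quantitative lower bound for $u$ near $\xi$ is the whole content of the theorem. The sentence ``mass balance gives $u\gtrsim\rho^\beta$ on a set of positive proportion'' has no argument behind it.

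\textbf{A side point.} Your claim that thinness of $K$ makes $\{\delta\le\epsilon R\}\cap B(\xi,R/2)$ carry little $\psi$-mass is false. For $w=1$ and $p\le n$, a finite $\epsilon R$-net of points has zero capacity yet forces $\delta\le\epsilon R$ on all of $B(\xi,R/2)$. The restriction is also unnecessary. Condition \eqref{eqn:morrey} involves only balls with $r<\delta(x)/2$, so $R^{-\lambda}\mathbf 1_{\Omega\cap B(\xi,R)}w$ already satisfies it with $M\le1$, as your own computation shows.

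\textbf{The missing idea.} What is needed is a stability estimate against a reference solution on the full ball; this is how the paper argues.
\begin{enumerate}[label=(\roman*)]
\item Normalize $\xi=0$, $R=1$, $w(B(0,1))=1$. Take the annular density $f=\mathbf 1_{B(0,1/2)\setminus\cl{B(0,1/4)}}w$, and let $u_1$ be the solution the hypothesis gives for $\mathbf 1_\Omega f$, so $u_1\le C\delta^{\alpha_{\mathrm{ss}}}$.
\item Let $z$ solve $-\laplacian_{p,w}z=f$ in $B(0,1)$ and $v$ solve the same equation in $\Omega\cap B(0,1)$, both with zero boundary values. Since $z$ is a supersolution on the whole ball, \eqref{eqn:maximum} and weak Harnack give $0\le z\le C$, with $z\ge c$ on $B(0,1/2)$. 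Comparison gives $0\le v\le u_1$.
\item Write $\mathcal K=\capacity_{p,w}(\cl{B(0,1)}\setminus\Omega,B(0,2))$ and let $h$ be the corresponding capacitary potential. Then $z(1-h)$ is an admissible competitor for $v$. Testing the equation of $z$ with $zh^p$ gives $\int|\nabla(zh)|^p\,dw\le C\mathcal K$. So the energy of $z(1-h)$ exceeds that of $z$ by $O(\mathcal K^{1/p})$.
\item Uniform convexity and Poincar\'e then give $\int_{B(0,1)}|v-z|^p\,dw\le C\mathcal K^{1/\max\{p,2\}}$.
\item Only now is the decay used. Choose $\theta$ with $C\theta^{\alpha_{\mathrm{ss}}}<c/2$. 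Then $v\le u_1<c/2$ on $\Omega\cap B(0,\theta)$, and $v=0$ off $\Omega$. Hence $|v-z|\ge c/2$ on all of $B(0,\theta)$, and $c\,w(B(0,\theta))\le C\mathcal K^{1/\max\{p,2\}}$ bounds the capacity below.
\end{enumerate}
Your instinct to fix the small scale first and the capacity threshold second is the right order. What was missing is the $L^p$-closeness of $v$ to $z$, which turns small capacity into largeness of $v$ near $\xi$ without any Harnack inequality for $u$.
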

\begin{proof}
Fix $\xi\in\partial\Omega$ and $R>0$, and consider the datum
\[
R^{-p}\mathbf1_{\Omega\cap(B(\xi,R/2)\setminus\cl{B(\xi,R/4)})}\,dw.
\]
If $0<r<\delta(x)/2$ and $B(x,r)$ meets its support, then
$2r<\delta(x)\le|x-\xi|<r+R/2$, so $r<R/2$.
Its mass on $B(x,r)$ is therefore at most
$R^{\lambda-p}r^{-\lambda}w(B(x,r))$.
The hypothesis gives a nonnegative solution $u_R$ with
\begin{equation}\label{eqn:annular-bound}
 u_R(x)\le C(\delta(x)/R)^{\alpha_{\mathrm{ss}}},
\end{equation}
for all $x\in\Omega$.
By translation and scaling, it suffices to prove a positive lower bound
for $\capacity_{p,w}(\cl{B(0,1)}\setminus\Omega,B(0,2))$ under the
normalization $\xi=0$, $R=1$ and $w(B(0,1))=1$.

Let $h$ be the capacity potential of $\cl{B(0,1)}\setminus\Omega$
in $B(0,2)$. Then $0\le h\le1$, $h=1$ quasi-everywhere on the
removed set, and
\begin{equation}\label{eqn:local-capacity-energy}
\int_{B(0,2)}|\nabla h|^p\,dw
=\capacity_{p,w}(\cl{B(0,1)}\setminus\Omega,B(0,2)).
\end{equation}
Let $z\in H^{1,p}_0(B(0,1);w)$ and
$v\in H^{1,p}_0(\Omega\cap B(0,1);w)$ solve, respectively,
\begin{equation}\label{eqn:local-reference-problems}
\begin{aligned}
-\laplacian_{p,w}z&=\mathbf1_{B(0,1/2)\setminus\cl{B(0,1/4)}}w
&&\text{in }B(0,1),\\
-\laplacian_{p,w}v&=\mathbf1_{B(0,1/2)\setminus\cl{B(0,1/4)}}w
&&\text{in }\Omega\cap B(0,1).
\end{aligned}
\end{equation}
We extend $v$ by zero to $B(0,1)$.
Comparison on relatively compact subdomains of $\Omega\cap B(0,1)$,
followed by energy convergence of their zero-boundary solutions,
gives $0\le v\le u_1$ in $\Omega\cap B(0,1)$.

The reference solution satisfies
\begin{equation}\label{eqn:reference-local}
0\le z\le C,
\qquad z\ge c>0,
\end{equation}
in $B(0,1)$ and $B(0,1/2)$, respectively.
The upper bound follows from \eqref{eqn:maximum}.
For the lower bound, test \eqref{eqn:local-reference-problems} with a
fixed nonnegative cutoff supported in the annulus and equal to one
on a smaller ball. Doubling and H\"older's inequality give
$\int_{B(0,1)}|\nabla z|^p\,dw\ge c$.
Testing with $z$ then gives
$\int_{B(0,1/2)\setminus\cl{B(0,1/4)}}z\,dw\ge c$.
The upper bound in \eqref{eqn:reference-local} and the homogeneous weak
Harnack inequality for the nonnegative supersolution $z$ imply the
stated lower bound.

The function $z(1-h)$ belongs to $H^{1,p}_0(\Omega\cap B(0,1);w)$.
Indeed, it has zero Sobolev values on $\partial B(0,1)$ and vanishes
quasi-everywhere on $B(0,1)\setminus\Omega$; see
\cite[Chapters 4 and 21]{MR2305115}.
Testing the equation for $z$ with $zh^p$, and using
\eqref{eqn:reference-local} and the Poincar\'e inequality for $h$, gives
\begin{equation}\label{eqn:local-capacity-product}
\int_{B(0,1)}|\nabla(zh)|^p\,dw
\le C\int_{B(0,2)}|\nabla h|^p\,dw.
\end{equation}
The energy functional for \eqref{eqn:local-reference-problems} is minimized
by $z$ on $H^{1,p}_0(B(0,1);w)$ and by $v$ on its zero-boundary subspace
for $\Omega\cap B(0,1)$. We use $z(1-h)$ as a competitor for $v$.
By \eqref{eqn:local-capacity-product}, the difference of the two minimum
values is at most
$C(\int_{B(0,2)}|\nabla h|^p\,dw)^{1/p}$ when that integral is at most one.
Uniform convexity of the $p$-energy and the energy bounds for $z$ and $v$
therefore give
\begin{equation}\label{eqn:energy-gap}
\int_{B(0,1)}|v-z|^p\,dw
\le C\left(\int_{B(0,2)}|\nabla h|^p\,dw\right)^{1/\max\{p,2\}}.
\end{equation}
Here we used the Poincar\'e inequality after the convexity estimate.
For $1<p<2$, H\"older's inequality converts the weighted quadratic
convexity remainder into the square of the $L^p$ gradient norm.

Choose $0<\theta<1/2$ so that the upper bound in
\eqref{eqn:annular-bound} is less than half the lower bound in
\eqref{eqn:reference-local} on $\Omega\cap B(0,\theta)$.
Since $v=0$ off $\Omega$, these estimates and \eqref{eqn:energy-gap} yield
\[
c\,w(B(0,\theta))
\le C\left(\int_{B(0,2)}|\nabla h|^p\,dw\right)^{1/\max\{p,2\}}.
\]
Doubling and $w(B(0,1))=1$ bound the left-hand side below.
Equation \eqref{eqn:local-capacity-energy} gives the required positive
capacity bound. If the capacity energy exceeds one, the bound is immediate.
Scaling back proves \eqref{eqn:cdc} on $\partial\Omega$.
\end{proof}

\subsection{A necessary condition on the measure}
The next lemma is the weighted version of \cite[Theorem 3.2]{MR998128}.
\begin{lemma}\label{lem:necessary-growth}
Let $\nu\ge0$ be locally finite on $\Omega$ and let
$u\in H^{1,p}_{\loc}(\Omega;w)\cap C^\alpha(\cl\Omega)$ satisfy
$-\laplacian_{p,w}u=\nu$ in $\Omega$, where $0<\alpha\le1$.
Then
\begin{equation}\label{eqn:necessary-growth}
\nu(B(x,r))\le C[u]_{C^\alpha(\cl\Omega)}^{p-1}
r^{(p-1)\alpha-p}w(B(x,r))
\end{equation}
for every $x\in\Omega$ and $0<r<\delta(x)/2$.
The constant depends only on $p$ and the doubling constant of $w$.
\end{lemma}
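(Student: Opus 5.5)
Fix $x\in\Omega$ and $0<r<\delta(x)/2$, so that $B(x,2r)\subset\Omega$. The plan is to test the equation with a cutoff adapted to $B(x,r)$ and to subtract a constant from $u$, so that only the oscillation of $u$ on $B(x,2r)$ enters.

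\textbf{Step 1: test function.} Choose $\eta\in C_c^\infty(B(x,2r))$ with $0\le\eta\le1$, $\eta=1$ on $B(x,r)$ and $|\nabla\eta|\le 2/r$. Testing $-\laplacian_{p,w}u=\nu$ with $\eta^p$ gives
\[
\nu(B(x,r))\le\int\eta^p\,d\nu=p\int\eta^{p-1}|\nabla u|^{p-2}\nabla u\cdot\nabla\eta\,dw
\le \frac{2p}{r}\int_{B(x,2r)}\eta^{p-1}|\nabla u|^{p-1}\,dw .
\]

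\textbf{Step 2: Caccioppoli with a constant shift.} Set $k=u(x)$ and $v=u-k$, so that $|v|\le[u]_{C^\alpha(\cl\Omega)}(2r)^\alpha$ on $B(x,2r)$. Testing with $\eta^p v$, which is admissible since it is compactly supported in $\Omega$ and $u\in H^{1,p}_{\loc}(\Omega;w)\cap C(\Omega)$, yields
\[
\int\eta^p|\nabla u|^p\,dw\le p\int\eta^{p-1}|v||\nabla u|^{p-1}|\nabla\eta|\,dw+\int\eta^p v\,d\nu .
\]
The last term is bounded by $\sup_{B(x,2r)}|v|\int\eta^p\,d\nu$. After Young's inequality this gives
\[
\int\eta^p|\nabla u|^p\,dw\le C\,\sup|v|^p\, r^{-p}w(B(x,2r))+C\sup|v|\int\eta^p\,d\nu .
\]

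\textbf{Step 3: close the estimate.} Put $A=\int\eta^p\,d\nu$, $L=[u]_{C^\alpha(\cl\Omega)}$ and $W=w(B(x,2r))$. By Hölder,
\[
A\le \frac{C}{r}\Bigl(\int\eta^p|\nabla u|^p\,dw\Bigr)^{(p-1)/p}W^{1/p},
\]
and inserting Step 2 gives
\[
A\le \frac{C}{r}W^{1/p}\bigl(L^pr^{\alpha p-p}W+L r^\alpha A\bigr)^{(p-1)/p}.
\]
This is the step where the $\nu$-term on the right must be absorbed, and it is the main obstacle. Using $(a+b)^{(p-1)/p}\le a^{(p-1)/p}+b^{(p-1)/p}$ and Young's inequality with exponents $p$ and $p/(p-1)$ on the second term produces $\tfrac12A$ plus $C L^{p-1}r^{-p}(L^{-1}\dots)$. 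Working through the powers, the second term is $C r^{-1}W^{1/p}(Lr^\alpha)^{(p-1)/p}A^{(p-1)/p}$, so Young gives
\[
\le \tfrac12 A+C r^{-p}W (Lr^\alpha)^{p-1}.
\]
Absorption requires $A<\infty$, which holds because $\nu$ is locally finite. This leads to
\[
A\le C L^{p-1}r^{(p-1)\alpha-p}W .
\]

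\textbf{Step 4: doubling.} Since $W\le C_D\,w(B(x,r))$, we obtain \eqref{eqn:necessary-growth}. The constants depend only on $p$ and $C_D$.
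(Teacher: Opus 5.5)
Your proof is correct, but it handles the measure term in the Caccioppoli step differently from the paper.

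\textbf{The paper's route.} It tests with the nonnegative function $(\sup_{B(x,3r)}u-u)\eta^p$. Because $\nu\ge0$, the measure term then has a favorable sign and is simply discarded. This gives $\int_{B(x,2r)}|\nabla u|^p\,dw\le Cr^{-p}(\osc_{B(x,3r)}u)^p\,w(B(x,3r))$ with no $\nu$ on the right. A second cutoff, supported in $B(x,2r)$, then bounds $\nu(B(x,r))$.

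\textbf{Your route.} You shift by $u(x)$ and keep the term $\int\eta^p v\,d\nu\le\sup|v|\,A$. You then close the estimate by absorption. Your bookkeeping in Step 3 is right: the Young remainder $Cr^{-p}W(Lr^\alpha)^{p-1}$ has exactly the target homogeneity. The absorption is legitimate because $A<\infty$ by local finiteness of $\nu$. Your constants depend only on $p$ and $C_D$, since $2^\alpha\le2$ for $\alpha\le1$.

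\textbf{What each buys.} The sign trick is more economical: it needs no absorption and no appeal to $A<\infty$. On the other hand, you weight Step 1 by $\eta^{p-1}$ and apply H\"older against $\int\eta^p|\nabla u|^p\,dw$. This lets you use a single cutoff on $B(x,2r)$ and obtain the full range $0<r<\delta(x)/2$ directly. The paper instead works with radii $r,2r,3r$ and first proves the bound only for $r<\delta(x)/4$. It then recovers $r<\delta(x)/2$ by covering $B(x,r)$ with balls of radius $r/8$ and using doubling.

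These two features are independent of each other. Combining the paper's nonnegative test $(\sup_{B(x,2r)}u-u)\eta^p$ with your single weighted cutoff would give the shortest argument: it needs neither absorption nor covering.
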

\begin{proof}
Fix $x\in\Omega$ and first suppose that $0<r<\delta(x)/4$.
Choose $\eta\in C_c^\infty(B(x,3r))$ equal to one on $B(x,2r)$,
with $0\le\eta\le1$ and $|\nabla\eta|\le C/r$.
The test $(\sup_{B(x,3r)}u-u)\eta^p$ is nonnegative.
The weak equation and $\nu\ge0$ imply
\begin{equation}\label{eqn:necessary-caccioppoli}
\int_{B(x,2r)}|\nabla u|^p\,dw
\le Cr^{-p}(\osc_{B(x,3r)}u)^p w(B(x,3r)).
\end{equation}
The test is admissible by local Sobolev approximation, since the equation
makes $\nu$ a continuous functional on $H^{1,p}_0(B(x,3r);w)$.
Testing once more with a cutoff equal to one on $B(x,r)$ and supported
in $B(x,2r)$, we obtain from \eqref{eqn:necessary-caccioppoli}
\[
\begin{aligned}
\nu(B(x,r))
&\le Cr^{-1}w(B(x,2r))^{1/p}
\left(\int_{B(x,2r)}|\nabla u|^p\,dw\right)^{(p-1)/p}
\\
&\le Cr^{-p}(\osc_{B(x,3r)}u)^{p-1}w(B(x,r)).
\end{aligned}
\]
The H\"older bound for $u$ proves \eqref{eqn:necessary-growth} in this range.
For $r<\delta(x)/2$, cover $B(x,r)$ by finitely many balls of radius
$r/8$ with centers in $B(x,r)$. Each smaller ball lies in the range
already proved. Summation and doubling give \eqref{eqn:necessary-growth}.
\end{proof}

\begin{corollary}\label{cor:data-criterion}
Assume \eqref{eqn:cdc} on $\partial\Omega$. Let $\nu\ge0$ be locally
finite on $\Omega$ with $\|\delta\|_{L^\infty(\spt\nu)}<\infty$.
Problem \eqref{eqn:DE} has a bounded solution in
$H^{1,p}_{\loc}(\Omega;w)\cap C^\alpha(\cl\Omega)$ for some $0<\alpha\le1$
if and only if \eqref{eqn:morrey}, with $\Gamma=\partial\Omega$,
holds for some $0\le\lambda<p$ and $M<\infty$.
Among bounded solutions, uniqueness holds if $\Omega$ is bounded or $p=2$.
\end{corollary}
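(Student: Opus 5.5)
The corollary follows by combining Theorem \ref{thm:local}, Lemma \ref{lem:necessary-growth} and Theorem \ref{thm:local-uniqueness}; we treat the two directions and the uniqueness claim separately.

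\emph{Sufficiency.} Suppose \eqref{eqn:morrey} holds with $\Gamma=\partial\Omega$, some $0\le\lambda<p$ and $M<\infty$. Since $\|\delta\|_{L^\infty(\spt\nu)}<\infty$ and \eqref{eqn:cdc} holds on $\partial\Omega$, Theorem \ref{thm:local} applies directly. It gives a bounded weak solution $u\in H^{1,p}_{\loc}(\Omega;w)\cap C(\cl\Omega)$ with $u=0$ on $\partial\Omega$. The bound \eqref{eqn:main-bound} provides boundedness, and \eqref{eqn:holder} gives $u\in C^{\alpha}(\cl\Omega)$ with $\alpha=\alpha_{\mathrm{gh}}\in(0,1]$. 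If $\nu=0$, we simply take $u=0$.

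\emph{Necessity.} Let $u$ be a bounded solution in $H^{1,p}_{\loc}(\Omega;w)\cap C^\alpha(\cl\Omega)$. If $[u]_{C^\alpha(\cl\Omega)}=\infty$ is possible on an unbounded set, we should work with the seminorm as defined: the space $C^\alpha(\cl\Omega)$ is understood to mean $[u]_{C^\alpha(\cl\Omega)}<\infty$. This is the one point to state explicitly, since Lemma \ref{lem:necessary-growth} requires a finite global seminorm.

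With this reading, Lemma \ref{lem:necessary-growth} gives, for $x\in\Omega$ and $0<r<\delta(x)/2$,
\[
\nu(B(x,r))\le C[u]_{C^\alpha(\cl\Omega)}^{p-1}r^{(p-1)\alpha-p}w(B(x,r)).
\]
Set $\lambda:=p-(p-1)\alpha$. Then $\lambda<p$ because $\alpha>0$. Also $\lambda\ge0$ because $\alpha\le1<p/(p-1)$. With $M:=C[u]_{C^\alpha(\cl\Omega)}^{p-1}$, this is exactly \eqref{eqn:morrey} with $\Gamma=\partial\Omega$, noting that $B(x,r)\subset\Omega$ in this range. The resulting $\beta=(p-\lambda)/(p-1)=\alpha$ is consistent.

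\emph{Uniqueness.} This is Theorem \ref{thm:local-uniqueness}. If $\Omega$ is bounded, it applies verbatim. If $p=2$, it applies because \eqref{eqn:cdc} is assumed on $\partial\Omega$. In both cases the solutions lie in $C(\cl\Omega)$ and vanish on $\partial\Omega$, as that theorem requires.

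The main potential obstacle is therefore only bookkeeping: checking that $\lambda$ falls in $[0,p)$, and that the finiteness of the Hölder seminorm is the intended meaning of membership in $C^\alpha(\cl\Omega)$. Everything substantive was done in the earlier results.
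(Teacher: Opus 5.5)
Your proposal is correct and follows the paper's proof: necessity comes from Lemma \ref{lem:necessary-growth} with $\lambda=p-(p-1)\alpha$, sufficiency from Theorem \ref{thm:local} via \eqref{eqn:holder}, and uniqueness from Theorem \ref{thm:local-uniqueness}. Your extra bookkeeping is accurate and only makes explicit what the paper leaves implicit, namely that $\lambda\in[0,p)$, that $B(x,r)\subset\Omega$ when $r<\delta(x)/2$, and that $C^\alpha(\cl\Omega)$ means a finite global seminorm as defined in the paper.
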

\begin{proof}
If such a solution exists, \eqref{eqn:necessary-growth} gives
\eqref{eqn:morrey} with $\lambda=p-(p-1)\alpha$.
Conversely, Theorem \ref{thm:local} gives a solution satisfying
\eqref{eqn:holder}. Uniqueness follows from Theorem \ref{thm:local-uniqueness}.
\end{proof}

\subsection{Trace estimates and compactness}
\begin{theorem}\label{thm:compact}
Let $\Omega$ be open, $w$ be $p$-admissible, and $\nu\ge0$ be locally
finite on $\Omega$ with bounded support. Suppose that
$S\in H^{1,p}_{\loc}(\Omega;w)\cap C(\cl\Omega)$ satisfies
\[
S\ge0\quad\text{in }\Omega,\qquad S=0\quad\text{on }\partial\Omega,
\qquad -\laplacian_{p,w}S\ge\nu\quad\text{in }\Omega.
\]
Then the identity on $C_c^\infty(\Omega)$ extends to a compact trace
$T_\nu:H^{1,p}_0(\Omega;w)\to L^p(\Omega;\nu)$.
For every $\epsilon>0$, there is $C_\epsilon>0$ such that
\begin{equation}\label{eqn:compact-local}
\|T_\nu v\|_{L^p(\nu)}^p
\le\epsilon\int_\Omega|\nabla v|^p\,dw
+C_\epsilon\int_\Omega|v|^p\,dw,
\end{equation}
for all $v\in H^{1,p}_0(\Omega;w)$.
\end{theorem}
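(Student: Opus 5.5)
The starting point is the supersolution $S$ as a test weight. For $v\in C_c^\infty(\Omega)$ we test $-\laplacian_{p,w}S\ge\nu$ with the nonnegative function $|v|^p$. This is admissible because $S$ is continuous and $v$ has compact support, and the weak inequality extends to $H^{1,p}$ tests with compact support. The test gives
\[
\int_\Omega|v|^p\,d\nu\le p\int_\Omega |\nabla S|^{p-1}|v|^{p-1}|\nabla v|\,dw .
\]
This alone is not homogeneous correctly. Instead we use the Picone-type test $|v|^p/(S+t)^{p-1}$, $t>0$, and the classical inequality
\[
|\nabla S|^{p-2}\nabla S\cdot\nabla\Bigl(\frac{|v|^p}{(S+t)^{p-1}}\Bigr)\le|\nabla v|^p .
\]
This yields
\[
\int_\Omega\frac{|v|^p}{(S+t)^{p-1}}\,d\nu\le\int_\Omega|\nabla v|^p\,dw .
\]
Since $\nu$ has bounded support and $S$ is bounded on bounded sets (being continuous on $\cl\Omega$), hence on $\spt\nu$, we get $\int|v|^p\,d\nu\le(\sup_{\spt\nu}S+t)^{p-1}\int|\nabla v|^p\,dw$. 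So $T_\nu$ is bounded and extends by density to $H^{1,p}_0(\Omega;w)$.

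For \eqref{eqn:compact-local}, fix $\epsilon>0$ and split $\nu$ into its part near $\partial\Omega$ and its interior part. Since $S$ is continuous on $\cl\Omega$, vanishes on $\partial\Omega$, and $\spt\nu$ is bounded, the set $\spt\nu\cap\cl\Omega$ is compact. Hence there is $\rho>0$ with $S<\epsilon^{1/(p-1)}/2$ on $\spt\nu\cap\{\delta<\rho\}$; uniform continuity on the compact set $\cl{\spt\nu}$ handles points near $\partial\Omega$. Applying the Picone bound with $t=\epsilon^{1/(p-1)}/2$ to $\nu_1=\mathbf 1_{\{\delta<\rho\}}\nu$ gives $\int|v|^p\,d\nu_1\le\epsilon\int|\nabla v|^p\,dw$. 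The remaining measure $\nu_2=\mathbf1_{\{\delta\ge\rho\}}\nu$ has compact support $K\subset\Omega$.

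The step I expect to be the main obstacle is the interior part $\nu_2$. Here we need $\int|v|^p\,d\nu_2\le\epsilon\int|\nabla v|^p+C_\epsilon\int|v|^p$, together with compactness. The hypotheses give no Morrey bound on $\nu$ directly, only via $S$. On a neighborhood $U\Subset\Omega$ of $K$, we localize the Picone argument with a cutoff $\eta\in C_c^\infty(U)$, $\eta=1$ on $K$, applied to $\eta v$. This only gives boundedness of $v\mapsto\eta v\mapsto L^p(\nu_2)$, not smallness. For smallness I would use the $\nu_1$ trick again at small scales. Cover $K$ by finitely many small balls $B_i$ and replace $S$ on each ball by $S-\inf_{2B_i}S$; by continuity, the oscillation of $S$ on $2B_i$ is below $\epsilon^{1/(p-1)}/2$ if the radii are small. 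This function is still a supersolution of $-\laplacian_{p,w}\ge\nu$ in $2B_i$ and is nonnegative, so the localized Picone estimate applies with $\eta_i v$, $\eta_i$ cut off to $2B_i$. We get
\[
\int_{B_i}|v|^p\,d\nu\le\epsilon\int|\nabla(\eta_i v)|^p\,dw\le C\epsilon\int_{2B_i}|\nabla v|^p\,dw+C\epsilon r^{-p}\int_{2B_i}|v|^p\,dw .
\]
Summing with bounded overlap yields \eqref{eqn:compact-local} with $C_\epsilon=C\epsilon r(\epsilon)^{-p}$. The same oscillation smallness also applies near the boundary, so the splitting above can be absorbed into this covering argument.

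Compactness then follows from \eqref{eqn:compact-local}. If $v_k\rightharpoonup0$ in $H^{1,p}_0(\Omega;w)$, the gradients are bounded. It remains to show $\int_{U}|v_k|^p\,dw\to0$ on a bounded neighborhood $U$ of $\spt\nu$, since only that region enters after localization. This follows from the compact embedding $H^{1,p}(B;w)\hookrightarrow L^p(B;w)$ on balls, a consequence of the Sobolev inequality \eqref{eqn:sobolev} and the Poincar\'e inequality for $p$-admissible weights. Letting $k\to\infty$ and then $\epsilon\to0$ gives $\|T_\nu v_k\|_{L^p(\nu)}\to0$, so $T_\nu$ is compact.
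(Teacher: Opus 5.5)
Your proposal is correct and follows essentially the paper's route. The global Picone test with $S+t$ controls the boundary layer $\{\delta<h\}$ without a lower-order term. The localized Picone test with $S-\inf_{2B}S+t$ on small balls, summed over a bounded-overlap covering, handles the compact interior part. Compactness then comes from the local compact embedding on a bounded neighborhood of $\spt\nu$ together with the $\epsilon$-estimate.

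Using weak convergence (complete continuity in the reflexive space $H^{1,p}_0(\Omega;w)$) instead of the paper's explicit extraction of an $L^p(w)$-Cauchy subsequence of $\eta v_j$ is only a cosmetic difference. Your remark that the boundary layer can be absorbed into the covering, using $\osc_{\Omega\cap 2B_i}S$ and uniform continuity of $S$ on compact subsets of $\cl\Omega$, is also valid.
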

\begin{proof}
Let $B(x,2r)\Subset\Omega$. Picone's inequality \cite{MR1618334},
applied to $S-\inf_{B(x,2r)}S+t$, gives
\begin{equation}\label{eqn:picone}
\int_{B(x,2r)}|\varphi|^p\,d\nu
\le(\osc_{B(x,2r)}S)^{p-1}
\int_{B(x,2r)}|\nabla\varphi|^p\,dw
\end{equation}
for $\varphi\in C_c^\infty(B(x,2r))$, after letting $t\downarrow0$.
The test is $|\varphi|^p/(S-\inf_{B(x,2r)}S+t)^{p-1}$.
It is justified by local Sobolev approximation and
$0\le\nu\le-\laplacian_{p,w}S$.
On a compact subset of $\Omega$, uniform continuity of $S$ makes
the coefficient in \eqref{eqn:picone} arbitrarily small when $r$ is small.
Applying \eqref{eqn:picone} to cutoffs of $v$ and summing over a covering
with bounded overlap therefore gives the desired estimate on that subset.

The same test with $S+t$ on $\Omega$ gives
\[
\int_\Omega\frac{|v|^p}{(S+t)^{p-1}}\,d\nu
\le\int_\Omega|\nabla v|^p\,dw,
\]
for all $v\in C_c^\infty(\Omega)$. Restricting the left-hand side and letting $t\downarrow0$, we obtain
\begin{equation}\label{eqn:compact-boundary-layer}
\int_{\Omega\cap\{\delta<h\}}|v|^p\,d\nu
\le\left(\sup_{\spt\nu\cap\{\delta<h\}}S\right)^{p-1}
\int_\Omega|\nabla v|^p\,dw.
\end{equation}
The coefficient tends to zero as $h\downarrow0$, since $S$ is continuous,
vanishes on $\partial\Omega$, and $\spt\nu$ is bounded.
The supremum is taken to be zero if its set is empty.
Combining \eqref{eqn:compact-boundary-layer} with the interior estimate
proves \eqref{eqn:compact-local} for smooth $v$. It also gives the continuous
extension $T_\nu$, and density proves the estimate on $H^{1,p}_0(\Omega;w)$.

Choose $\eta\in C_c^\infty(\R^n)$ equal to one near $\spt\nu$.
For a bounded sequence $(v_j)$ in $H^{1,p}_0(\Omega;w)$, the zero extensions
of $\eta v_j$ have bounded Sobolev norms and support in a fixed ball.
The weighted Poincar\'e inequality gives approximation in $L^p(w)$ by
averages on finitely many balls, with error bounded by
$Cr\|\nabla(\eta v_j)\|_{L^p(w)}$.
Thus a subsequence of $(\eta v_j)$ converges in $L^p(w)$.
Applying \eqref{eqn:compact-local} to its differences, and then letting
$\epsilon\downarrow0$, proves convergence in $L^p(\nu)$.
Since $T_\nu(\eta v_j)=T_\nu v_j$, the trace is compact.
\end{proof}

\begin{corollary}\label{cor:compact-local-distance}
Assume \eqref{eqn:cdc} on $\partial\Omega$.
Let $0\le\lambda<p$, $M>0$, and let $\nu\ge0$ be locally finite on
$\Omega$, satisfying
\begin{equation}\label{eqn:local-trace-growth}
\nu(B(x,r))\le Mr^{-\lambda}w(B(x,r))
\quad(x\in\Omega,\ 0<r<\delta(x)/2).
\end{equation}
For $0\le\vartheta<p-\lambda$, the identity on $C_c^\infty(\Omega)$
extends to a continuous trace
$T_\nu:H^{1,p}_0(\Omega;w)\to L^p(\Omega;\delta^{-\vartheta}d\nu)$.
If $\|\delta\|_{L^\infty(\spt\nu)}<\infty$, then
\begin{equation}\label{eqn:depth-trace}
\int_\Omega|T_\nu u|^p\delta^{-\vartheta}\,d\nu
\le CM\|\delta\|_{L^\infty(\spt\nu)}^{p-\lambda-\vartheta}
\int_\Omega|\nabla u|^p\,dw.
\end{equation}
For every $\epsilon>0$,
\begin{equation}\label{eqn:epsilon}
\begin{aligned}
\int_\Omega|T_\nu u|^p\delta^{-\vartheta}\,d\nu
\le{}&\epsilon\int_\Omega|\nabla u|^p\,dw\\
&+CM^{p/(p-\lambda-\vartheta)}
\epsilon^{-(\lambda+\vartheta)/(p-\lambda-\vartheta)}
\int_\Omega|u|^p\,dw.
\end{aligned}
\end{equation}
Both estimates hold for $u\in H^{1,p}_0(\Omega;w)$.
The trace is compact if $\spt\nu$ is bounded.
The constants depend only on the structural constants, $\gamma$,
$\lambda$ and $\vartheta$.
\end{corollary}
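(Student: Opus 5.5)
The approach is to reduce everything to Theorem \ref{thm:compact} and the Picone argument in its proof, with the barrier supplied by Proposition \ref{prop:barrier}. The point is to run that argument for the modified measure $\mu:=\delta^{-\vartheta}\nu$. The difficulty is that $\mu$ need not satisfy \eqref{eqn:morrey} with the same $\lambda$. Instead, I would decompose $\nu$ dyadically in depth: $\nu_k:=\mathbf 1_{\{2^{-k-1}D<\delta\le 2^{-k}D\}}\nu$, where $D$ denotes a chosen depth scale, and handle the deep part separately.

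\emph{Step 1 (estimate \eqref{eqn:depth-trace}).} Assume first $D:=\|\delta\|_{L^\infty(\spt\nu)}<\infty$. Each $\nu_k\ge0$ satisfies \eqref{eqn:morrey} with constant $M$, and $\|\delta\|_{L^\infty(\spt\nu_k)}\le 2^{-k}D$. Proposition \ref{prop:barrier} with $\Gamma=\partial\Omega$ then gives a continuous barrier $S_k$ with $-\laplacian_{p,w}S_k\ge\nu_k$. On $\spt\nu_k$ it satisfies
\[
S_k\le CM^{1/(p-1)}(2^{-k}D)^{\beta}.
\]
The global Picone step in the proof of Theorem \ref{thm:compact}, i.e.\ testing with $|v|^p/(S_k+t)^{p-1}$ and letting $t\downarrow0$, yields
\[
\int|v|^p\,d\nu_k\le\Bigl(\sup_{\spt\nu_k}S_k\Bigr)^{p-1}\int|\nabla v|^p\,dw\le CM(2^{-k}D)^{p-\lambda}\int|\nabla v|^p\,dw .
\]
Multiplying by $\sup_{\spt \nu_k}\delta^{-\vartheta}\le C(2^{-k}D)^{-\vartheta}$ and summing the geometric series, which converges because $p-\lambda-\vartheta>0$, gives \eqref{eqn:depth-trace} for $v\in C_c^\infty(\Omega)$. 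Density then gives the continuous extension of the trace. For general $\nu$ without finite depth, the same sum runs over all $k\in\mathbb Z$ with $D=1$. The shallow pieces ($k\ge0$) are treated exactly as above. Each deep piece lies at depth $\sim 2^{j}$, and I would control it by covering with interior balls of radius comparable to the depth and applying Lemma \ref{lem:trace} to cutoffs. This bounds it by $\|v\|_{H^{1,p}(w)}^p$ with bounded overlap, which gives continuity into $L^p(\delta^{-\vartheta}d\nu)$.

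\emph{Step 2 (estimate \eqref{eqn:epsilon}).} Split at a depth $h>0$ into $\nu_{<h}=\mathbf 1_{\{\delta<h\}}\nu$ and $\nu_{\ge h}$. Step 1 applied to $\nu_{<h}$ gives the bound $CMh^{p-\lambda-\vartheta}\int|\nabla u|^p\,dw$. For $\nu_{\ge h}$ we have $\delta^{-\vartheta}\le h^{-\vartheta}$. I would cover $\{\delta\ge h\}$ by balls $B(x,h/4)$ with bounded overlap, so that the restricted measure satisfies \eqref{eqn:full} with $R_0\sim h$ and constant $CM$, as in the proof of Theorem \ref{thm:local}. Lemma \ref{lem:trace} at scale $r\le h$ then gives
\[
h^{-\vartheta}CM\bigl(r^{p-\lambda}\|\nabla u\|_p^p+r^{-\lambda}\|u\|_p^p\bigr).
\]
Taking $r=h$, the total gradient coefficient is $CMh^{p-\lambda-\vartheta}$. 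Setting it equal to $\epsilon$ gives $h=(\epsilon/CM)^{1/(p-\lambda-\vartheta)}$, and the zero-order coefficient $CMh^{-\lambda-\vartheta}$ then equals $CM^{p/(p-\lambda-\vartheta)}\epsilon^{-(\lambda+\vartheta)/(p-\lambda-\vartheta)}$, which is exactly the stated power. Applying Lemma \ref{lem:trace} to $u\eta$, with $\eta$ a cutoff to a neighbourhood of $\{\delta\ge h\}$, keeps the integrals on $\Omega$.

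\emph{Step 3 (compactness).} If $\spt\nu$ is bounded, then \eqref{eqn:epsilon} plays the role of \eqref{eqn:compact-local}. I would repeat the last paragraph of the proof of Theorem \ref{thm:compact} with $\delta^{-\vartheta}d\nu$ in place of $\nu$. The cutoff $\eta$ and the weighted Poincar\'e inequality give a subsequence converging in $L^p(w)$, and \eqref{eqn:epsilon} applied to the differences upgrades this to convergence in $L^p(\delta^{-\vartheta}d\nu)$.

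The main obstacle is Step 1's reliance on Proposition \ref{prop:barrier} for each layer and the bookkeeping that makes the constants in \eqref{eqn:depth-trace} independent of $k$. The Morrey constant of $\nu_k$ must be $\le M$ with no loss. Each layer requires positive depth, and empty layers are simply skipped. For unbounded depth in the continuity claim, the deep layers must be controlled by the full $H^{1,p}$ norm. I expect the bounded-overlap covering there to need care.
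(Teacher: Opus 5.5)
Your proposal is correct, and Step~2 is essentially the paper's argument: the boundary layer $\{\delta<h\}$ is handled by the finite-depth estimate, the interior by a covering at scale $\sim h$ and Lemma~\ref{lem:trace}, and then one sets $CMh^{p-\lambda-\vartheta}=\epsilon$. Steps~1 and~3, however, take a different route.

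The paper does not decompose $\nu$. On an admissible ball $B(x,r)$ with $r<\delta(x)/2$ one has $\delta>\delta(x)-r>r$, so $\delta^{-\vartheta}\nu(B(x,r))\le Mr^{-\lambda-\vartheta}w(B(x,r))$. Hence $\delta^{-\vartheta}\nu$ is itself an admissible datum in \eqref{eqn:morrey}, with exponent $\lambda+\vartheta<p$. Your remark that it fails \eqref{eqn:morrey} with the same $\lambda$ is true but beside the point, since the barrier theory allows any exponent below $p$. One application of Proposition~\ref{prop:barrier} gives $S$ with $\|S\|_\infty^{p-1}\le CM\|\delta\|_{L^\infty(\spt\nu)}^{p-\lambda-\vartheta}$, and Picone yields \eqref{eqn:depth-trace} at once. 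When $\spt\nu$ is bounded, the same $S$ vanishes continuously on $\partial\Omega$, so compactness follows from Theorem~\ref{thm:compact}.

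Your dyadic layering uses the barrier only at the original exponent $\lambda$ and then sums the resulting trace inequalities. This is legitimate for every $p$, because trace inequalities are additive in the measure even though supersolutions are not. It also displays the restriction $\vartheta<p-\lambda$ transparently, as convergence of $\sum_k 2^{-k(p-\lambda-\vartheta)}$. The cost is that you never get a single supersolution for $\delta^{-\vartheta}\nu$. That is why compactness must come from \eqref{eqn:epsilon} plus the Rellich-type step, which you correctly arrange.

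Three smaller points:
\begin{itemize}
\item The obstacle you flag is not one: $\nu_k\le\nu$, and the constants in Proposition~\ref{prop:barrier} depend only on the structural constants, $\beta$ and $\gamma$.
\item The deep-layer bookkeeping in Step~1 is unnecessary. Step~2 with a fixed $h$ already bounds the trace by the full $H^{1,p}(w)$ norm, which gives continuity.
\item In Step~2, take $r=ch$ with $c$ small rather than $r=h$, since Lemma~\ref{lem:trace} needs $r\le R_0$. Then $\mathbf 1_{\{\delta\ge h\}}\nu$ already satisfies \eqref{eqn:full} globally at that scale, so neither the covering nor the cutoff $\eta$ is needed.
\end{itemize}
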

\begin{proof}
Fix $u\in C_c^\infty(\Omega)$.
The assertion is immediate for $\nu=0$. Otherwise,
$\delta^{-\vartheta}\nu$ satisfies \eqref{eqn:morrey} with exponent
$\lambda+\vartheta<p$ and coefficient $CM$.
If its support has finite depth, Proposition \ref{prop:barrier} gives
$-\laplacian_{p,w}S\ge\delta^{-\vartheta}\nu$ with
\[
\|S\|_\infty^{p-1}
\le CM\|\delta\|_{L^\infty(\spt\nu)}^{p-\lambda-\vartheta}.
\]
Picone's inequality with $S+t$, followed by $t\downarrow0$, proves
\eqref{eqn:depth-trace} for smooth $u$.
Applying the same argument to
$\mathbf1_{\{\delta<h\}}\delta^{-\vartheta}\nu$, whose support has
depth at most $h$, gives
\begin{equation}\label{eqn:subcritical-boundary-layer}
\int_{\{\delta<h\}}|u|^p\delta^{-\vartheta}\,d\nu
\le CMh^{p-\lambda-\vartheta}\int_\Omega|\nabla u|^p\,dw.
\end{equation}
To estimate the interior contribution, take $8r<\delta(x)$.
By \eqref{eqn:local-trace-growth}, the restriction of $\nu$ to $B(x,2r)$
satisfies \eqref{eqn:full} at scale $r$, with the same exponent $\lambda$
and growth coefficient bounded by $CM$.
Applying \eqref{eqn:trace} to a cutoff of $u$ gives
\begin{equation}\label{eqn:local-trace-ball}
\int_{B(x,r)}|u|^p\,d\nu
\le CM\left(r^{p-\lambda}\int_{B(x,2r)}|\nabla u|^p\,dw
+r^{-\lambda}\int_{B(x,2r)}|u|^p\,dw\right).
\end{equation}
Cover $\{\delta\ge h\}$ by balls of radius $h/32$ whose doubled
balls have bounded overlap. On these balls $\delta^{-\vartheta}\le
Ch^{-\vartheta}$. Summing \eqref{eqn:local-trace-ball} and combining
with \eqref{eqn:subcritical-boundary-layer} yields
\[
\int_\Omega|u|^p\delta^{-\vartheta}\,d\nu
\le CM\left(h^{p-\lambda-\vartheta}\int_\Omega|\nabla u|^p\,dw
+h^{-\lambda-\vartheta}\int_\Omega|u|^p\,dw\right).
\]
Choosing $h$ so that $CMh^{p-\lambda-\vartheta}=\epsilon$ proves
\eqref{eqn:epsilon}. Density defines $T_\nu$ and extends both estimates
to $H^{1,p}_0(\Omega;w)$.
If $\spt\nu$ is bounded, the supersolution for
$\delta^{-\vartheta}\nu$ is continuous and vanishes on $\partial\Omega$;
Theorem \ref{thm:compact} gives compactness.
\end{proof}

\subsection{Two-weight Hardy inequalities}

Hardy inequalities under uniform fatness were studied by Ancona
\cite{MR856511}, Lewis \cite{MR946438} and Wannebo \cite{MR1010807};
Mikkonen \cite{MR1386213} treated the weighted setting.
For the capacity-density approach and distance weights, see
\cite[Chapters 6, 7 and 10]{MR4306765}.
The strong barriers in \cite{Hara2024} also give the inequality with
weight $\delta_\Gamma^{-p}\,dw$ for a closed part $\Gamma$ of the boundary.
The theorem below allows an independent measure satisfying a growth condition.

Let $\varnothing\ne\Gamma\subset\partial\Omega$ be closed.
\begin{theorem}\label{thm:two-weight}
Assume \eqref{eqn:cdc} on $\Gamma$.
Let $0\le\lambda<p$ and let $\nu$ be a nonnegative Radon measure on
$\R^n$ satisfying
\begin{equation}\label{eqn:global-growth}
\nu(B(x,r))\le M_{\mathrm g}r^{-\lambda}w(B(x,r))
\quad(x\in\R^n,\ r>0),
\end{equation}
where $M_{\mathrm g}>0$.
There exist $\alpha_{\mathrm{ss}}>0$ and
$U\in H^{1,p}_{\loc}(\Omega;w)\cap C(\Omega)$ such that
\begin{equation}\label{eqn:two-weight-barrier}
\begin{aligned}
\delta_\Gamma^{\alpha_{\mathrm{ss}}}\le U
&\le C\delta_\Gamma^{\alpha_{\mathrm{ss}}},\\
-\laplacian_{p,w}U
&\ge\frac{c}{M_{\mathrm g}}U^{p-1}\delta_\Gamma^{\lambda-p}\nu,
\end{aligned}
\end{equation}
in $\Omega$.
Consequently,
\begin{equation}\label{eqn:two-weight-hardy}
\int_\Omega|\varphi|^p\delta_\Gamma^{\lambda-p}\,d\nu
\le CM_{\mathrm g}\int_\Omega|\nabla\varphi|^p\,dw,
\end{equation}
for all $\varphi\in C_c^\infty(\Omega)$.
The constants depend only on the structural constants, $\lambda$ and $\gamma$.
\end{theorem}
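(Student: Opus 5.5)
The inequality \eqref{eqn:two-weight-hardy} will follow from the barrier \eqref{eqn:two-weight-barrier} by Picone's inequality. The test function is $|\varphi|^p/U^{p-1}$, exactly as in the proof of Theorem \ref{thm:compact}. With $\mu:=\frac{c}{M_{\mathrm g}}U^{p-1}\delta_\Gamma^{\lambda-p}\nu\le-\laplacian_{p,w}U$, Picone's inequality gives
\[
\int_\Omega |\varphi|^p\,\frac{d\mu}{U^{p-1}}\le\int_\Omega|\nabla\varphi|^p\,dw .
\]
The left side equals $\frac{c}{M_{\mathrm g}}\int|\varphi|^p\delta_\Gamma^{\lambda-p}\,d\nu$, which is the claim. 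Since $U$ is continuous and bounded below by $\delta_\Gamma^{\alpha_{\mathrm{ss}}}>0$ on the compact support of $\varphi$, no shift $U+t$ is needed. So the work is in constructing $U$.

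To build $U$ I would sum dyadic strong barriers. For $k\in\mathbb Z$, let $A_k:=\{x\in\Omega:2^{k-1}<\delta_\Gamma(x)\le2^k\}$ and set $\nu_k:=M_{\mathrm g}^{-1}2^{k(\lambda-p)+k\alpha_{\mathrm{ss}}(p-1)}\mathbf1_{A_k}\nu$. By \eqref{eqn:global-growth}, the measure $\mathbf1_{A_k}\nu$ satisfies \eqref{eqn:morrey} with constant $M_{\mathrm g}$ and depth $\le2^k$. Proposition \ref{prop:barrier} (for $\Gamma$) then gives $S_k\ge0$ with $-\laplacian_{p,w}S_k\ge\mathbf1_{A_k}\nu/M_{\mathrm g}$ and
\[
S_k\asymp 2^{k(\beta-\alpha_{\mathrm{ss}})}\min\{\delta_\Gamma,2^k\}^{\alpha_{\mathrm{ss}}}.
\]
I would take $\alpha_{\mathrm{ss}}$ from the proposition, reduced if necessary. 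A sum $\sum_k a_kS_k$ is not a supersolution for $p\ne2$, so I would not add. Instead I would follow the proof of Proposition \ref{prop:barrier} and build a single barrier directly at every scale. Concretely, I would modify that construction so that at scale $\theta^k$ the local solutions $u_B$ solve $-\laplacian_{p,w}u_B=\epsilon^{p-1}\theta^{k(\alpha_{\mathrm{ss}}(p-1)+\lambda-p)}M_{\mathrm g}^{-1}\mathbf1_{\{\delta_\Gamma>\theta^2R\}\cap B}\nu$. Here $R=\theta^k$, and the index $k$ now ranges over all of $\mathbb Z$, with no upper truncation at depth $1$.

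The normalization then matches. On $B$ with $R=\theta^k$, the rescaled datum has Morrey constant at most $C\epsilon^{p-1}R^{\alpha_{\mathrm{ss}}(p-1)}R^{\lambda-p}\cdot R^{p-\lambda}=C\epsilon^{p-1}R^{\alpha_{\mathrm{ss}}(p-1)}$. By \eqref{eqn:maximum} and \eqref{eqn:boundary}, the perturbation is therefore $\le C\epsilon R^{\alpha_{\mathrm{ss}}+\beta}\cdot R^{-\beta}\cdot R^{\beta}$-type. After homogeneity, it is $C\epsilon R^{\alpha_{\mathrm{ss}}}$, so \eqref{eqn:bound_of_u_b_1}--\eqref{eqn:bound_of_u_b_2} survive with a universal small $\epsilon$. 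This is the same scale-invariance as in the proposition, and it holds uniformly in $k\in\mathbb Z$ because \eqref{eqn:global-growth} holds for all $r>0$. The gluing step (Lemma \ref{lem:gluing}, \eqref{eqn:scales1}--\eqref{eqn:scales3}) gives $U:=\min_k v_k$ over $k$ with $\delta_\Gamma<\theta^k$. It is continuous, satisfies $U\asymp\delta_\Gamma^{\alpha_{\mathrm{ss}}}$ (rescale by a constant to obtain the lower bound exactly $\delta_\Gamma^{\alpha_{\mathrm{ss}}}$), and on the annulus $\theta^{k+1}<\delta_\Gamma<\theta^k$ we get
\[
-\laplacian_{p,w}U\ge c\,M_{\mathrm g}^{-1}\theta^{k(\alpha_{\mathrm{ss}}(p-1)+\lambda-p)}\nu\ge \frac{c}{M_{\mathrm g}}U^{p-1}\delta_\Gamma^{\lambda-p}\nu,
\]
using $U\le C\theta^{k\alpha_{\mathrm{ss}}}$ and $\delta_\Gamma\asymp\theta^k$ there.

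The main obstacle is the gluing when $v_{k-1}$ and $v_k$ carry differently scaled data. On the overlap, where both attain the minimum, Lemma \ref{lem:gluing} needs a common lower datum. I would use the pointwise minimum of the two data, which on the annulus is comparable to the claimed right side up to $\theta$-dependent constants. That is acceptable since $\theta$ is fixed. A second issue is that $\nu$ lives on $\R^n$ and may charge $\partial\Omega$. Only $\nu$ restricted to $\Omega$ enters, and since $\delta_\Gamma\ge\delta$ the interior balls in \eqref{eqn:morrey} need $\delta_\Gamma$ rather than $\delta$. The global condition \eqref{eqn:global-growth} covers this, as the proposition already evaluates the measure on $B(x,r)\cap\Omega$.
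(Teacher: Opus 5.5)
Your proposal is correct and follows the paper's proof essentially step by step. Like the paper, you rerun the construction of Proposition~\ref{prop:barrier} at all scales $k\in\mathbb Z$ with ball data proportional to $M_{\mathrm g}^{-1}R^{(p-1)\alpha_{\mathrm{ss}}+\lambda-p}\nu$, glue through Lemma~\ref{lem:gluing}, rescale $U$ by a constant, and conclude with Picone's inequality. Your explicit use of the smaller of the two neighbouring scaled data on each annulus is what the paper's local minimum argument does implicitly.

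The only differences are cosmetic. The paper drops the cutoff $\mathbf 1_{\{\delta_\Gamma>\theta^2R\}}$, which is unnecessary since \eqref{eqn:global-growth} already gives \eqref{eqn:full}. It also tests with the shift $U+t$, whereas your unshifted test is equally valid because $U\ge\delta_\Gamma^{\alpha_{\mathrm{ss}}}>0$ on $\spt\varphi$.
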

\begin{proof}
We use the construction in Proposition \ref{prop:barrier} at all scales.
Choose $0<\alpha_{\mathrm{ss}}<\min\{(p-\lambda)/(p-1),\alpha_{\mathrm{bd}},1\}$
and then $\theta$ as in that proof.
For $B=B(\xi,R)$, $\xi\in\Gamma$, let $u_B$ solve
\begin{equation}\label{eqn:hardy-ball-equation}
-\laplacian_{p,w}u_B
=\frac{\epsilon}{M_{\mathrm g}}
R^{(p-1)\alpha_{\mathrm{ss}}+\lambda-p}\nu
\quad\text{in }\Omega\cap B,
\qquad u_B-\eta_B\in H^{1,p}_0(\Omega\cap B;w),
\end{equation}
where $\eta_B$ has the same boundary values as in Proposition \ref{prop:barrier}.
By \eqref{eqn:global-growth}, the datum satisfies \eqref{eqn:full}
with the same exponent $\lambda$. Its contribution to \eqref{eqn:maximum}
is bounded by $C\epsilon^{1/(p-1)}R^{\alpha_{\mathrm{ss}}}$.
Thus \eqref{eqn:maximum} and \eqref{eqn:boundary} give
\begin{equation}\label{eqn:hardy-ball-bounds}
\frac14(\theta R)^{\alpha_{\mathrm{ss}}}\le u_B\le2R^{\alpha_{\mathrm{ss}}},
\qquad
u_B\le\frac12(\theta R)^{\alpha_{\mathrm{ss}}},
\end{equation}
in $\Omega\cap B$ and $\Omega\cap2\theta B$, respectively,
provided $\epsilon>0$ is sufficiently small, independently of $R$.
We truncate $u_B$ at $R^{\alpha_{\mathrm{ss}}}$ and extend it by that
constant outside $B$, obtaining $\widehat u_B$ as in Proposition
\ref{prop:barrier}.

For each $k\in\mathbb Z$, we choose the boundary covering used in
Proposition \ref{prop:barrier}, now with radius $\theta^k$.
We define $v_k$ as the locally finite minimum of the corresponding
functions $\widehat u_B$, as in that construction,
and set
\[
U(x)=\inf\{v_k(x):k\in\mathbb Z,\ \delta_\Gamma(x)<\theta^k\}.
\]
On compact subsets of $\Omega$, sufficiently large positive indices are
excluded by the distance condition; sufficiently large negative indices
are excluded by the lower bound in \eqref{eqn:hardy-ball-bounds}.
The infimum is therefore locally a minimum of finitely many functions.
At $\delta_\Gamma=\theta^k$, the inequality
$v_{k-1}\le\tfrac12\theta^{k\alpha_{\mathrm{ss}}}<v_k$
excludes the changing index in a neighborhood, as in Proposition
\ref{prop:barrier}.
Consequently $U\in H^{1,p}_{\loc}(\Omega;w)\cap C(\Omega)$.

If $\theta^{k+1}<\delta_\Gamma(x)\le\theta^k$, the indices attaining
the minimum are among $k-1$ and $k$. Each $v_i$ attaining the minimum
lies strictly below its truncation level, so only untruncated ball
solutions contribute locally.
Equations \eqref{eqn:hardy-ball-equation} and
\eqref{eqn:hardy-ball-bounds}, together with the local minimum argument
of Proposition \ref{prop:barrier}, give
\[
C^{-1}\delta_\Gamma^{\alpha_{\mathrm{ss}}}\le U
\le C\delta_\Gamma^{\alpha_{\mathrm{ss}}},
\qquad
-\laplacian_{p,w}U\ge
\frac{c}{M_{\mathrm g}}\delta_\Gamma^{(p-1)\alpha_{\mathrm{ss}}+\lambda-p}\nu,
\]
in $\Omega$.
The neighborhood argument at each scale boundary also covers any mass of
$\nu$ carried by a level set of $\delta_\Gamma$.
Multiplying $U$ by a fixed constant proves \eqref{eqn:two-weight-barrier}.

For $\varphi\in C_c^\infty(\Omega)$, test
\eqref{eqn:two-weight-barrier} with $|\varphi|^p(U+t)^{1-p}$.
Local trace continuity justifies the test, and Picone's inequality gives
\[
\frac{c}{M_{\mathrm g}}
\int_\Omega|\varphi|^p\left(\frac{U}{U+t}\right)^{p-1}
\delta_\Gamma^{\lambda-p}\,d\nu
\le\int_\Omega|\nabla\varphi|^p\,dw.
\]
Since $U>0$ in $\Omega$, letting $t\downarrow0$ proves
\eqref{eqn:two-weight-hardy}.
\end{proof}

\begin{corollary}\label{cor:local-hardy}
Assume \eqref{eqn:cdc} on $\Gamma$. Then
\begin{equation}\label{eqn:local-hardy}
\int_\Omega|\varphi|^p\delta_\Gamma^{-p}\,dw
\le C\int_\Omega|\nabla\varphi|^p\,dw,
\end{equation}
for all $\varphi\in C_c^\infty(\Omega)$.
\end{corollary}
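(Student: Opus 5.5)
This follows from Theorem \ref{thm:two-weight} with the choice $\nu=w$, i.e.\ $d\nu=w\,dx$, and $\lambda=0$. With this choice the weight in \eqref{eqn:two-weight-hardy} becomes $\delta_\Gamma^{\lambda-p}\,d\nu=\delta_\Gamma^{-p}\,dw$, which is exactly the left-hand side of \eqref{eqn:local-hardy}. The plan is therefore to check the growth hypothesis \eqref{eqn:global-growth} for this measure and then read off the inequality.

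The hypothesis is immediate. For every $x\in\R^n$ and $r>0$ we have
\[
\nu(B(x,r))=w(B(x,r))=r^{-0}w(B(x,r)),
\]
so \eqref{eqn:global-growth} holds with $M_{\mathrm g}=1$ and $\lambda=0$, which is admissible since $0\le\lambda<p$. The measure $w\,dx$ is a nonnegative Radon measure on $\R^n$ because $w\in L^1_{\loc}(\R^n)$. Since \eqref{eqn:cdc} holds on $\Gamma$, Theorem \ref{thm:two-weight} applies and gives
\[
\int_\Omega|\varphi|^p\delta_\Gamma^{-p}\,dw\le C\int_\Omega|\nabla\varphi|^p\,dw
\]
for all $\varphi\in C_c^\infty(\Omega)$. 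The constant depends only on the structural constants and $\gamma$, because $\lambda=0$ is fixed.

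There is no genuine obstacle here; the work has already been done in Theorem \ref{thm:two-weight}. The one thing to keep in mind is that the growth condition there is required for all $r>0$ and all centres in $\R^n$, not only for interior balls. For $\nu=w$ this is trivially satisfied, so no truncation or restriction of the measure to $\Omega$ is needed.
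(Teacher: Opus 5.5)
Your proposal is correct and is exactly the paper's argument: the paper also proves the corollary by applying \eqref{eqn:two-weight-hardy} with $d\nu=dw$, $\lambda=0$ and $M_{\mathrm g}=1$. Your check that \eqref{eqn:global-growth} then holds trivially for all centres and radii is precisely what the paper's one-line proof relies on.
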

\begin{proof}
Apply \eqref{eqn:two-weight-hardy} with $d\nu=dw$, $\lambda=0$ and
$M_{\mathrm g}=1$.
\end{proof}
Condition \eqref{eqn:global-growth} is imposed on all balls, whereas
\eqref{eqn:morrey} is imposed for $x\in\Omega$ and
$0<r<\delta_\Gamma(x)/2$, with the measure evaluated on $B(x,r)\cap\Omega$.

At the endpoint $\vartheta=p-\lambda$, the Hardy inequality gives
an additional trace estimate.
\begin{corollary}\label{thm:quantitative}
Assume \eqref{eqn:cdc} on $\partial\Omega$.
Let $0\le\lambda<p$, $M>0$, and let $\nu\ge0$ be locally finite on
$\Omega$, satisfying \eqref{eqn:local-trace-growth}.
The identity on $C_c^\infty(\Omega)$ extends to a continuous trace
$T_\nu:H^{1,p}_0(\Omega;w)\to L^p(\Omega;\delta^{\lambda-p}d\nu)$,
and
\begin{equation}\label{eqn:critical}
\int_\Omega|T_\nu u|^p\delta^{\lambda-p}\,d\nu
\le CM\int_\Omega|\nabla u|^p\,dw,
\end{equation}
for all $u\in H^{1,p}_0(\Omega;w)$.
The constant depends only on the structural constants, $\gamma$ and $\lambda$.
\end{corollary}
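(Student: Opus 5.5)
The plan is to reduce \eqref{eqn:critical} to two ingredients already available: the interior trace estimate \eqref{eqn:local-trace-ball} from the proof of Corollary \ref{cor:compact-local-distance}, and the Hardy inequality of Corollary \ref{cor:local-hardy} with $\Gamma=\partial\Omega$. Theorem \ref{thm:two-weight} cannot be applied directly, because $\nu$ satisfies only the interior growth condition \eqref{eqn:local-trace-growth} and not \eqref{eqn:global-growth}. The idea is to localize to Whitney-type balls, where the two conditions coincide, and to let the Hardy inequality absorb the lower-order terms.

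First, fix $u\in C_c^\infty(\Omega)$. Choose a covering of $\Omega$ by balls $B_i=B(x_i,r_i)$ with $r_i=\delta(x_i)/32$, such that the enlarged balls $B(x_i,2r_i)$ have overlap bounded by a constant depending only on $n$. This is a standard Whitney-type covering. On $B(x_i,2r_i)$ we have $\delta\asymp r_i$ with absolute constants.

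Second, apply \eqref{eqn:local-trace-ball} to each ball. It is justified exactly as in the proof of Corollary \ref{cor:compact-local-distance}: since $8r_i<\delta(x_i)$, the restriction of $\nu$ to $B(x_i,2r_i)$ satisfies \eqref{eqn:full} at scale $r_i$ with coefficient $CM$. This gives
\[
\int_{B_i}|u|^p\,d\nu\le CM\left(r_i^{p-\lambda}\int_{B(x_i,2r_i)}|\nabla u|^p\,dw+r_i^{-\lambda}\int_{B(x_i,2r_i)}|u|^p\,dw\right).
\]
Now multiply by $\sup_{B_i}\delta^{\lambda-p}\le Cr_i^{\lambda-p}$; this step is valid whatever the sign of $\lambda-p$, since $\delta\asymp r_i$ on $B_i$. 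The two terms on the right become $CM\int_{B(x_i,2r_i)}|\nabla u|^p\,dw$ and $CM\int_{B(x_i,2r_i)}|u|^p r_i^{-p}\,dw\le CM\int_{B(x_i,2r_i)}|u|^p\delta^{-p}\,dw$.

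Third, sum over $i$, using the bounded overlap:
\[
\int_\Omega|u|^p\delta^{\lambda-p}\,d\nu\le CM\left(\int_\Omega|\nabla u|^p\,dw+\int_\Omega|u|^p\delta^{-p}\,dw\right).
\]
Corollary \ref{cor:local-hardy} with $\Gamma=\partial\Omega$ bounds the last integral by $C\int_\Omega|\nabla u|^p\,dw$, with a constant depending only on the structural constants and $\gamma$. This proves \eqref{eqn:critical} for smooth $u$. By density, the identity extends to a continuous map $T_\nu$ into $L^p(\Omega;\delta^{\lambda-p}d\nu)$, and the estimate persists on $H^{1,p}_0(\Omega;w)$.

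The main point to check is the localization in the second step. One has to verify that the cutoff used inside \eqref{eqn:trace} produces only the terms already displayed: the cutoff gradient term $r_i^{-p}|u|^p$ is of exactly Hardy type, which is why the argument closes only thanks to Corollary \ref{cor:local-hardy}. One also has to confirm that the constants are uniform in $i$. This uniformity comes from scale invariance: the Morrey coefficient after localization is $CM$, independently of $r_i$, by the recentering and covering argument used in the proofs of Proposition \ref{prop:barrier} and Theorem \ref{thm:local}.
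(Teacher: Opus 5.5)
Your proposal is correct and is essentially the paper's own proof. It uses a Whitney covering with bounded overlap of the doubled balls, multiplies \eqref{eqn:local-trace-ball} by $r^{\lambda-p}$ and sums to get the bound $CM\bigl(\int_\Omega|\nabla u|^p\,dw+\int_\Omega|u|^p\delta^{-p}\,dw\bigr)$, then absorbs the last term with the Hardy inequality \eqref{eqn:local-hardy} for $\Gamma=\partial\Omega$ and extends by density (the paper also notes, not needed here, that this trace agrees locally $\nu$-a.e.\ with the one in Corollary \ref{cor:compact-local-distance}).
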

\begin{proof}
Let $u\in C_c^\infty(\Omega)$. Take a Whitney covering whose radii are
a sufficiently small fixed multiple of the boundary distance and whose
doubled balls have bounded overlap.
Multiplying \eqref{eqn:local-trace-ball} by $r^{\lambda-p}$ and summing gives
\[
\int_\Omega|u|^p\delta^{\lambda-p}\,d\nu
\le CM\left(\int_\Omega|\nabla u|^p\,dw
+\int_\Omega|u|^p\delta^{-p}\,dw\right).
\]
Equation \eqref{eqn:local-hardy} bounds the last integral.
Density defines the trace and proves \eqref{eqn:critical}.
It agrees locally $\nu$-almost everywhere with the traces in
Corollary \ref{cor:compact-local-distance}.
\end{proof}

\section{Fractional Hardy inequalities}\label{sec:fractional-hardy}
Let $n\ge1$, $1<p<\infty$, $0<s<1$, and let $D\subsetneq\R^n$ be
nonempty and open. For $A\subset\R^n$, we write
$\complement A=\R^n\setminus A$. We write $\delta(x)=\dist(x,\partial D)$
and extend functions on $D$ by zero to $\complement D$.
The fractional seminorm and the relative capacity are defined by
\[
[u]_{s,p}^p=\iint_{\R^n\times\R^n}
\frac{|u(x)-u(z)|^p}{|x-z|^{n+ps}}\,dx\,dz,
\]
\[
\capacity_{s,p}(K,B)=\inf\{[u]_{s,p}^p:
 u\in C_c^\infty(B),\ u\ge1\text{ on }K\}.
\]
Here $K\subset B$ is compact.
We write $W^{s,p}(\R^n)=\{u\in L^p(\R^n):[u]_{s,p}<\infty\}$,
with norm $(\|u\|_{L^p(\R^n)}^p+[u]_{s,p}^p)^{1/p}$.
Throughout this section, the extension domain and its weight are
\begin{equation}\label{eqn:slit-p}
O=\R^{n+1}\setminus(\complement D\times\{0\}),
\qquad w(x,y)=|y|^{p-1-ps}.
\end{equation}
The weight $w$ is $p$-admissible. We use the distance notation
$\delta_\Gamma$ also for nonempty closed sets $\Gamma\subset\complement D$.

\subsection{Trace estimates and the Hardy inequality}\label{sec:fractional-trace}
Choose an even nonnegative $\rho\in C_c^\infty(B(0,1))$ with integral one.
For $u\in C_c^\infty(D)$ and $(x,y)\in\R^{n+1}$, define
\[
Eu(x,y)=\int_{\R^n}\rho(z)u(x-yz)\,dz.
\]
The derivative kernels have mean zero. Hence Jensen's inequality gives
\begin{equation}\label{eqn:extension-derivative}
|\nabla Eu(x,y)|^p\le C|y|^{-p-n}
\int_{B(0,|y|)}|u(x-h)-u(x)|^p\,dh,
\end{equation}
for $y\ne0$.
Integrating \eqref{eqn:extension-derivative} with respect to $w\,dX$
and applying Tonelli's theorem, we obtain
\begin{equation}\label{eqn:extension-p}
\begin{aligned}
\int_{\R^{n+1}}w|\nabla Eu|^p\,dX&\le C[u]_{s,p}^p,\\
\int_{\R^n\times(-R,R)}w|Eu|^p\,dX&\le CR^{p-ps}\|u\|_{L^p(\R^n)}^p,
\end{aligned}
\end{equation}
for every $R>0$.
The second estimate follows directly from Jensen's inequality in the
formula for $Eu$. Since $u$ has compact support in $D$, $Eu$ vanishes
in a neighborhood of $\complement D\times\{0\}$.
Let $\eta_R$ be a smooth radial cutoff equal to one on $B(0,R)$,
supported in $B(0,2R)$, and satisfying $|\nabla\eta_R|\le C/R$.
By \eqref{eqn:extension-p},
\begin{equation}\label{eqn:extension-cutoff}
\int_{\R^{n+1}}w|Eu\nabla\eta_R|^p\,dX
\le CR^{-ps}\|u\|_{L^p(\R^n)}^p\longrightarrow0.
\end{equation}
Thus $\eta_REu\in C_c^\infty(O)$ converges to $Eu$ in the gradient norm,
and $(\eta_REu)(x,0)=u(x)$ for all sufficiently large $R$.

We also use the homogeneous trace inequality
\begin{equation}\label{eqn:homogeneous-trace}
[V(\cdot,0)]_{s,p}^p\le C\int_{\R^{n+1}}w|\nabla V|^p\,dX,
\end{equation}
for all $V\in C_c^\infty(\R^{n+1})$.
It follows from \cite[Theorem 1.1]{KoskelaSotoWang} by dilation and
letting the scale tend to infinity. The scaled lower-order term is $R^{-p}\int_{\R^{n+1}}w|V|^p\,dX$. The same estimate holds in the upper half-space.

For the identification of the extensions in Sections
\ref{sec:fractional} and \ref{sec:nonlinear}, we also need approximation
of zero-trace functions. The one-dimensional Hardy inequality gives
\begin{equation}\label{eqn:zero-trace-hardy-p}
\int_{\R^n\times(0,\infty)}y^{-1-ps}|V|^p\,dX
\le s^{-p}\int_{\R^n\times(0,\infty)}w|\partial_yV|^p\,dX.
\end{equation}
The estimate first holds for smooth zero-trace functions and extends by
completion. Cutting off near $y=0$ has gradient error controlled by the
left-hand side of \eqref{eqn:zero-trace-hardy-p} on a shrinking strip.
A radial cutoff has error controlled by its tail, since $y\le2R$ on the
cutoff annulus. Smooth approximation away from the plane then gives
approximation in energy by compactly supported zero-trace functions.

Let $\delta_0$ be the Dirac mass at zero and let
$\delta_O(X)=\dist(X,\partial O)$.
For the measure-data applications below, let $\mu$ be a signed locally
finite Radon measure on $D$. The following observation transfers its
interior growth condition to the extension domain. Suppose that
\begin{equation}\label{eqn:plane-morrey-p}
|\mu|(B(x,r))\le Mr^{n-ps+p-\lambda}
\quad(x\in D,\ 0<r<\delta(x)/2),\qquad 0\le\lambda<p.
\end{equation}
It implies
\begin{equation}\label{eqn:lifted-morrey-p}
(|\mu|\otimes\delta_0)(B(X,r))
\le CMr^{-\lambda}w(B(X,r))
\quad(X\in O,\ 0<r<\delta_O(X)/2).
\end{equation}
Indeed, a ball meeting the plane has weighted volume comparable to
$r^{n+p-ps}$. Every point of its intersection with the plane has
boundary distance greater than $r$. A finite covering by smaller
interior balls therefore permits the use of \eqref{eqn:plane-morrey-p}.
On the plane, $\delta_O(x,0)=\delta(x)$.
The same argument applies to all balls if the growth condition on $\mu$
is imposed for every center and radius.

\begin{theorem}\label{thm:fractional-two-weight}
Let $\varnothing\ne\Gamma\subset\complement D$ be closed and assume
\begin{equation}\label{eqn:general-fractional-density}
\capacity_{s,p}(\complement D\cap\cl{B(\xi,r)},B(\xi,2r))
\ge\gamma_s r^{n-ps}
\quad(\xi\in\Gamma,\ r>0),
\end{equation}
where $\gamma_s>0$.
Let $p-ps\le\lambda<p$ and let $\mu\ge0$ be a Radon measure on $\R^n$
satisfying
\begin{equation}\label{eqn:fractional-global-growth}
\mu(B(x,r))\le M_{\mathrm g}r^{n+p-ps-\lambda}
\quad(x\in\R^n,\ r>0).
\end{equation}
Then
\begin{equation}\label{eqn:fractional-two-weight}
\int_D|u|^p\delta_\Gamma^{\lambda-p}\,d\mu
\le CM_{\mathrm g}[u]_{s,p}^p,
\end{equation}
for all $u\in C_c^\infty(D)$,
where $C$ depends only on $n,p,s,\lambda$ and $\gamma_s$.
\end{theorem}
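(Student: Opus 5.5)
We lift the problem to the extension domain $O$ of \eqref{eqn:slit-p} with weight $w(x,y)=|y|^{p-1-ps}$. There we apply Theorem \ref{thm:two-weight}, with $\Gamma$ replaced by $\Gamma\times\{0\}\subset\partial O$ and the measure $\nu=\mu\otimes\delta_0$. The Hardy inequality in $O$ is then pulled back to $D$ through the extension operator $E$ and the estimate \eqref{eqn:extension-p}.

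\emph{Step 1 (capacity density in $O$).} We show that \eqref{eqn:general-fractional-density} implies \eqref{eqn:cdc} for $(p,w)$ on $\Gamma\times\{0\}$. For $\Xi=(\xi,0)$, the denominator $\capacity_{p,w}(\cl{B(\Xi,R)},B(\Xi,2R))$ is comparable to $R^{-p}w(B(\Xi,R))\asymp R^{n-ps}$. The compact set $\cl{B(\Xi,R)}\setminus O$ contains $(\complement D\cap\cl{B(\xi,R)})\times\{0\}$.

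Take an admissible $\varphi\in C_c^\infty(B(\Xi,2R))$ with $\varphi\ge1$ on that set. Its trace $\varphi(\cdot,0)$ lies in $C_c^\infty(B(\xi,2R))$ and is at least $1$ on $\complement D\cap\cl{B(\xi,R)}$. By \eqref{eqn:homogeneous-trace},
\[
\capacity_{s,p}(\complement D\cap\cl{B(\xi,R)},B(\xi,2R))\le C\int w|\nabla\varphi|^p\,dX .
\]
Taking the infimum and using \eqref{eqn:general-fractional-density} gives the ratio bound $\gamma\ge c\gamma_s$. One technicality remains: \eqref{eqn:general-fractional-density} uses the pair $(r,2r)$, whereas the trace of $\varphi$ is supported in $B(\xi,2R)$. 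This matches with $r=R$, so the pairs agree.

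\emph{Step 2 (growth of $\nu$).} We check that $\nu=\mu\otimes\delta_0$ satisfies \eqref{eqn:global-growth} with exponent $\lambda$. This is the global version of \eqref{eqn:lifted-morrey-p}. For a ball $B(X,r)$ meeting the plane, its trace on the plane lies in some $B(x,r)$, so
\[
\nu(B(X,r))\le M_{\mathrm g}r^{n+p-ps-\lambda}.
\]
On the other hand, $w(B(X,r))\ge c\,r^{n+p-ps}$, since the weight is of $A_p$-type in $y$ and the ball contains a comparable ball centred on the plane. Hence $\nu(B(X,r))\le CM_{\mathrm g}r^{-\lambda}w(B(X,r))$. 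Balls that do not meet the plane carry no mass. The hypothesis $\lambda\ge p-ps$ keeps the exponent $n+p-ps-\lambda\le n$, which is natural for a measure on $\R^n$. It is not needed in the estimate itself, but it does no harm.

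\emph{Step 3 (conclusion).} Theorem \ref{thm:two-weight} gives
\[
\int_O|\Phi|^p\delta_{\Gamma\times\{0\}}^{\lambda-p}\,d\nu\le CM_{\mathrm g}\int_O|\nabla\Phi|^pw\,dX
\]
for $\Phi\in C_c^\infty(O)$. For $u\in C_c^\infty(D)$, apply this to $\Phi=\eta_REu\in C_c^\infty(O)$. On the plane, $\Phi(x,0)=u(x)$ for large $R$, and $\delta_{\Gamma\times\{0\}}(x,0)=\delta_\Gamma(x)$. The right-hand side converges, by \eqref{eqn:extension-p} and \eqref{eqn:extension-cutoff}, to at most $CM_{\mathrm g}[u]_{s,p}^p$. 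This yields \eqref{eqn:fractional-two-weight}.

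I expect Step 1 to be the main obstacle. It requires verifying the denominator scaling $R^{n-ps}$ for the weighted capacity of a ball centred on the plane, via the test function and the Poincaré/Sobolev lower bound. It also requires justifying the trace restriction for admissible test functions, which are smooth in $\R^{n+1}$, so \eqref{eqn:homogeneous-trace} applies directly.
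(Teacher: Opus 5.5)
Your proposal is correct and follows essentially the same route as the paper. The paper also lifts the fractional capacity density to weighted capacity density on $\Gamma\times\{0\}$ by applying the homogeneous trace inequality to capacity competitors, and there only the upper bound $Cr^{n-ps}$ for the weighted capacity of $\cl{B((\xi,0),r)}$ is needed. It then checks the global growth of the lifted measure, applies Theorem~\ref{thm:two-weight}, and lets $R\to\infty$ with $\eta_R Eu$.

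The one imprecision is in Step 2. A ball meeting the plane need not contain a comparable ball centred on the plane, since its centre may lie at height $|y|$ close to $r$. The lower bound still holds: use $w(B((x,y),r))\asymp r^{n+1}(|y|+r)^{p-1-ps}\asymp r^{n+p-ps}$ for $|y|<r$, or doubling from $B((x,0),3r)\subset B((x,y),4r)$.
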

\begin{proof}
For compact $K\subset\cl{B(\xi,r)}$, applying
\eqref{eqn:homogeneous-trace} to capacity competitors gives
\begin{equation}\label{eqn:fractional-capacity-lift}
\capacity_{s,p}(K,B(\xi,2r))
\le C\capacity_{p,w}(K\times\{0\},B((\xi,0),2r)).
\end{equation}
The weighted capacity of $\cl{B((\xi,0),r)}$ relative to
$B((\xi,0),2r)$ is bounded above by $Cr^{n-ps}$.
Thus \eqref{eqn:general-fractional-density} and
\eqref{eqn:fractional-capacity-lift} imply \eqref{eqn:cdc}
for $O$ on $\Gamma\times\{0\}$.
By \eqref{eqn:fractional-global-growth}, the measure
$(\mathbf1_D\mu)\otimes\delta_0$ satisfies \eqref{eqn:global-growth}
in the weighted space $\R^{n+1}$ with coefficient $CM_{\mathrm g}$.
Theorem \ref{thm:two-weight} gives
\begin{equation}\label{eqn:extension-two-weight}
\int_D|V(x,0)|^p\delta_\Gamma^{\lambda-p}\,d\mu
\le CM_{\mathrm g}\int_Ow|\nabla V|^p\,dX,
\end{equation}
for all $V\in C_c^\infty(O)$.
For $u\in C_c^\infty(D)$, substitute $V=\eta_REu$ in
\eqref{eqn:extension-two-weight}.
Equations \eqref{eqn:extension-p} and \eqref{eqn:extension-cutoff}
allow us to let $R\to\infty$ and prove \eqref{eqn:fractional-two-weight}.
\end{proof}

\begin{corollary}\label{cor:fractional-hardy}
Under \eqref{eqn:general-fractional-density},
\begin{equation}\label{eqn:hardy-fractional-p}
\int_D|u|^p\delta_\Gamma^{-ps}\,dx\le C[u]_{s,p}^p,
\end{equation}
for all $u\in C_c^\infty(D)$.
\end{corollary}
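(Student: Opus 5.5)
The plan is to apply Theorem \ref{thm:fractional-two-weight} with Lebesgue measure, $d\mu=dx$, and the endpoint exponent $\lambda=p-ps$. This choice lies in the admissible range $p-ps\le\lambda<p$, since $0<s<1$.

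First I will check the growth condition \eqref{eqn:fractional-global-growth}. With $\lambda=p-ps$ it reads
\[
|B(x,r)|\le M_{\mathrm g}r^{n+p-ps-\lambda}=M_{\mathrm g}r^{n}.
\]
This holds for every $x\in\R^n$ and $r>0$ with $M_{\mathrm g}=|B(0,1)|$, a constant depending only on $n$.

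Next I compute the weight on the left-hand side of \eqref{eqn:fractional-two-weight}. It is $\delta_\Gamma^{\lambda-p}=\delta_\Gamma^{-ps}$, so \eqref{eqn:fractional-two-weight} becomes exactly \eqref{eqn:hardy-fractional-p}. The constant $C$ depends only on $n,p,s$ and $\gamma_s$, because $\lambda$ is determined by $p$ and $s$.

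The only point needing care is that the endpoint $\lambda=p-ps$ is genuinely allowed in Theorem \ref{thm:fractional-two-weight}. It is: the lifted measure $dx\otimes\delta_0$ must satisfy \eqref{eqn:global-growth} for the weight $|y|^{p-1-ps}$. A ball of radius $r$ in $\R^{n+1}$ has weighted volume at least $c\,r^{n+p-ps}$, while its intersection with the plane $\{y=0\}$ has Lebesgue measure at most $Cr^{n}$. Hence the ratio of the two is bounded by $Cr^{-\lambda}$ with $\lambda=p-ps$, which is exactly \eqref{eqn:global-growth}. Also $\lambda<p$, so Theorem \ref{thm:two-weight} applies. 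There is no real obstacle beyond this verification.
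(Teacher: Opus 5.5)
Your proposal is correct and is exactly the paper's argument: apply \eqref{eqn:fractional-two-weight} with $d\mu=dx$ and $\lambda=p-ps$, using $|B(x,r)|=|B(0,1)|r^n$ to verify \eqref{eqn:fractional-global-growth}. Your extra check of the lifted measure is already part of the proof of Theorem~\ref{thm:fractional-two-weight}. One small correction there: when $p-1-ps<0$, the lower bound $w(B)\ge c\,r^{n+p-ps}$ fails for balls far from the plane, so it holds only for balls meeting $\{y=0\}$, which are the only ones that matter.
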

\begin{proof}
Apply \eqref{eqn:fractional-two-weight} with $d\mu=dx$ and $\lambda=p-ps$.
\end{proof}
The proof of Theorem \ref{thm:fractional-two-weight} also proves
\eqref{eqn:fractional-two-weight} and \eqref{eqn:hardy-fractional-p}
whenever $O$ satisfies weighted capacity density on $\Gamma\times\{0\}$.
In particular, $\Gamma=\complement D$ gives the boundary distance on $D$.
The regional inequalities in \cite{ChenSong2003,Dyda2004} use energy over
$D\times D$, whereas \eqref{eqn:hardy-fractional-p} uses the full-space
seminorm. For $n\ge2$ and $ps<n$, the latter inequality under fractional
capacity density is proved in \cite[Theorem 6.5]{ILTVHardy}.
For further relations between fractional capacity density and Hardy
inequalities, see \cite{IMV2024}.
Theorem \ref{thm:fractional-two-weight} includes general measure weights.
The characterization in \cite[Theorem 1.1]{BianchiBrascoSkZagati}
uses positive supersolutions for the integral fractional $p$-Laplacian;
here the supersolutions are for the weighted local extension equation.

\subsection{Subcritical traces and compactness}\label{sec:embeddings}
Let $\mathcal X$ be the completion of $C_c^\infty(D)$ in $[\cdot]_{s,p}$.
For bounded $D$, the fractional Poincar\'e inequality identifies this space
with $\cl{C_c^\infty(D)}^{W^{s,p}(\R^n)}$.
For unbounded $D$, we assume \eqref{eqn:hardy-fractional-p} with
$\Gamma=\complement D$ whenever $\mathcal X$ is used below.
This estimate controls the $L^p$ norm on every compact subset of $D$.
Elements of the completion therefore have locally integrable
representatives, extended by zero outside $D$.
Their difference quotients form a closed subspace of
$L^p(\R^n\times\R^n)$, so $\mathcal X$ is reflexive.

For $\eta\in C_c^\infty(\R^n)$ and $u\in C_c^\infty(D)$,
\begin{equation}\label{eqn:fractional-multiplication}
\begin{aligned}
{}[\eta u]_{s,p}^p\le{}&C\|\eta\|_\infty^p[u]_{s,p}^p\\
&+C\int_D |u(z)|^p
\left(\int_{\R^n}\frac{|\eta(x)-\eta(z)|^p}{|x-z|^{n+ps}}\,dx\right)dz.
\end{aligned}
\end{equation}
The inner integral in \eqref{eqn:fractional-multiplication} is bounded and is $O(|z|^{-n-ps})$ at infinity.
For unbounded $D$, fix $\xi\in\complement D$ and use
$\delta(z)\le|z-\xi|$. The inner integral and $|\eta(z)|^p$ are then
bounded by $C_\eta\delta(z)^{-ps}$ on $D$.
Equation \eqref{eqn:hardy-fractional-p}, or the fractional Poincar\'e
inequality for bounded $D$, gives
\begin{equation}\label{eqn:fractional-cutoff-bound}
\|\eta u\|_{W^{s,p}(\R^n)}\le C_\eta[u]_{s,p}.
\end{equation}
Applying this estimate to differences of smooth approximations shows
that multiplication by $\eta$ maps $\mathcal X$ continuously into both
$\mathcal X$ and $W^{s,p}(\R^n)$.

\begin{proposition}\label{prop:compact-fractional}
Let $\nu\ge0$ be locally finite on $D$ with bounded support.
Suppose that $U\in H^{1,p}_{\loc}(O;w)\cap C(\cl O)$ satisfies
\[
U\ge0\quad\text{in }O,\qquad U=0\quad\text{on }\partial O,
\qquad -\laplacian_{p,w}U\ge\nu\otimes\delta_0\quad\text{in }O.
\]
Then the identity on $C_c^\infty(D)$ extends to a compact trace
$T_\nu:\mathcal X\to L^p(D;\nu)$.
If $\eta\in C_c^\infty(\R^n)$ is equal to one near $\spt\nu$, then
for every $\epsilon>0$ there exists $C_{\epsilon,\eta}>0$ such that
\begin{equation}\label{eqn:compact-fractional}
\|T_\nu u\|_{L^p(\nu)}^p
\le\epsilon[u]_{s,p}^p+C_{\epsilon,\eta}\|\eta u\|_{L^p(\R^n)}^p,
\end{equation}
for all $u\in\mathcal X$.
\end{proposition}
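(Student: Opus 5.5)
The plan is to transfer Theorem \ref{thm:compact} to the extension domain $O$ and pull the resulting estimate back to $D$ through the extension operator $E$ and the cutoff $\eta_R$ from Section \ref{sec:fractional-trace}.

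The measure $\tilde\nu:=\nu\otimes\delta_0$ is a nonnegative locally finite measure on $O$. Its support is bounded, since $\spt\nu$ is bounded and lies in $D\times\{0\}\subset O$. The function $U$ satisfies exactly the hypotheses of Theorem \ref{thm:compact} on $O$ with the $p$-admissible weight $w(x,y)=|y|^{p-1-ps}$. That theorem therefore gives, for every $\epsilon>0$ and $V\in C_c^\infty(O)$,
\[
\int_D|V(x,0)|^p\,d\nu\le\epsilon\int_Ow|\nabla V|^p\,dX+C_\epsilon\int_Ow|V|^p\,dX .
\]

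Next I localize, because the lower-order term must be controlled by $\|\eta u\|_{L^p}$ rather than by $\|u\|_{L^p}$, which may be infinite or not controlled by $[u]_{s,p}$ on unbounded $D$. For $u\in C_c^\infty(D)$, replace $u$ by $\eta u$; this does not change the left-hand side, since $\eta=1$ near $\spt\nu$. Then apply the display to $V=\eta_R E(\eta u)$ and let $R\to\infty$, using \eqref{eqn:extension-cutoff}. The gradient term is bounded by $C\epsilon[\eta u]_{s,p}^p$ via \eqref{eqn:extension-p}. The lower-order term is the problem: $\int w|E(\eta u)|^p$ over all $y$ is infinite in general. So before applying the theorem I also multiply by a cutoff $\zeta(y)$ equal to one for $|y|\le1$ and supported in $|y|<2$, taking $V=\zeta(y)E(\eta u)$. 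This function is compactly supported in $x$ because $\eta u$ is, and it vanishes near $\complement D\times\{0\}$. By the second line of \eqref{eqn:extension-p},
\[
\int w|V|^p\le C\|\eta u\|_{L^p}^p .
\]
The extra gradient term from $\zeta'$ is bounded by the same quantity. This yields
\[
\|u\|_{L^p(\nu)}^p\le C\epsilon[\eta u]_{s,p}^p+C_\epsilon\|\eta u\|_{L^p}^p .
\]
The key remaining step, which I expect to be the main obstacle, is replacing $[\eta u]_{s,p}$ by $[u]_{s,p}$ without picking up a non-small multiple of $[u]_{s,p}$. Estimate \eqref{eqn:fractional-multiplication} gives
\[
[\eta u]_{s,p}^p\le C[u]_{s,p}^p+C\int|u|^p k_\eta ,
\]
where $k_\eta$ is bounded by $C_\eta\delta^{-ps}$. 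The factor $\epsilon$ in front makes the constant $C$ harmless. The tail term is bounded by $C_\eta[u]_{s,p}^p$ through \eqref{eqn:fractional-cutoff-bound}, and it also carries $\epsilon$. Hence the total is at most $C_\eta\epsilon[u]_{s,p}^p$, and renaming $\epsilon$ gives \eqref{eqn:compact-fractional} for smooth $u$. The constant depends on $\eta$, which is permitted by the statement.

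Finally, I extend to the whole space. Applying \eqref{eqn:compact-fractional} to differences of smooth approximants in $\mathcal X$ defines $T_\nu$ by density. The term $\|\eta u\|_{L^p}$ is continuous on $\mathcal X$ by \eqref{eqn:fractional-cutoff-bound}, so the estimate extends to all $u\in\mathcal X$.

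For compactness, take a bounded sequence $(u_j)$ in $\mathcal X$. By \eqref{eqn:fractional-cutoff-bound} the sequence $(\eta u_j)$ is bounded in $W^{s,p}(\R^n)$, with supports in the fixed compact set $\spt\eta$. The fractional Rellich theorem then gives a subsequence converging in $L^p(\R^n)$. Applying \eqref{eqn:compact-fractional} to $u_j-u_k$ gives
\[
\limsup_{j,k\to\infty}\|T_\nu(u_j-u_k)\|_{L^p(\nu)}^p\le C\epsilon ,
\]
for every $\epsilon>0$. Thus $(T_\nu u_j)$ is Cauchy in $L^p(\nu)$, and $T_\nu$ is compact.
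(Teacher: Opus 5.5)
Your argument is correct. It reaches the estimate by a more modular route than the paper.

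\emph{Your route.} You invoke Theorem~\ref{thm:compact} on $O$, with datum $\nu\otimes\delta_0$ and $S=U$, as a black box. You then feed it the test function $V=\zeta(y)E(\eta u)\in C_c^\infty(O)$. Since only the trace at $y=0$ matters, the vertical cutoff keeps $\int_O w|V|^p\,dX$ finite by the second line of \eqref{eqn:extension-p} at the fixed height $2$. The error from $\zeta'$ is a lower-order term that sits inside the $\epsilon$-weighted gradient term, so it is harmless.

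\emph{The paper's route.} The paper instead re-runs the proof of Theorem~\ref{thm:compact} in the fractional setting. It applies Picone's inequality for $U+t$ on all of $O$ to $\eta_R Ev$ and lets $R\to\infty$ via \eqref{eqn:extension-cutoff}. This gives the boundary-layer bound \eqref{eqn:fractional-boundary-layer} directly in terms of $[v]_{s,p}^p$, with coefficient $(\sup_{\spt\nu\cap\{\delta<h\}}U(\cdot,0))^{p-1}$ and no lower-order term. It then handles the compact interior part by local Picone estimates on balls compactly contained in $O$. This produces $\int_D|v|^p\,d\nu\le\epsilon[v]_{s,p}^p+C_\epsilon\|v\|_{L^p}^p$, and only then does it substitute $v=\eta u$.

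\emph{Comparison.} The underlying mechanism is the same. Your version avoids duplicating the covering argument. The paper's version keeps the quantitative boundary-layer coefficient visible.

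The remaining steps match the paper: passing from $[\eta u]_{s,p}$ to $[u]_{s,p}$, the density extension, and compactness via fractional Rellich applied to $(\eta u_j)$. The first of these is exactly \eqref{eqn:fractional-cutoff-bound}, so your detour through \eqref{eqn:fractional-multiplication} is unnecessary, though harmless.
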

\begin{proof}
Let $v\in C_c^\infty(D)$. Picone's inequality for $U+t$ with test trace
$v$, obtained by using $\eta_REv$ and passing to the limit by
\eqref{eqn:extension-cutoff}, gives
\begin{equation}\label{eqn:fractional-boundary-layer}
\int_{D\cap\{\delta<h\}}|v|^p\,d\nu
\le C\left(\sup_{\spt\nu\cap\{\delta<h\}}U(\cdot,0)\right)^{p-1}[v]_{s,p}^p.
\end{equation}
The coefficient tends to zero as $h\downarrow0$.
On the remaining compact part of $\spt\nu\times\{0\}$,
apply \eqref{eqn:picone}, with $S=U$, to cutoffs of $Ev$ in balls
compactly contained in $O$.
Uniform continuity of $U$ makes their oscillation factors arbitrarily small.
The first estimate in \eqref{eqn:extension-p} controls the gradient terms;
the second controls the cutoff terms by $C_r\|v\|_{L^p(\R^n)}^p$.
Summing over a finite covering with bounded overlap and using
\eqref{eqn:fractional-boundary-layer}, we obtain
\[
\int_D|v|^p\,d\nu\le\epsilon[v]_{s,p}^p
+C_\epsilon\|v\|_{L^p(\R^n)}^p.
\]
Apply this estimate to $v=\eta u$.
Since $\eta=1$ near $\spt\nu$, \eqref{eqn:fractional-cutoff-bound}
proves \eqref{eqn:compact-fractional} for smooth $u$, after adjusting
$\epsilon$. The estimate gives a continuous trace and extends by density
to $\mathcal X$.
For a bounded sequence $(u_j)$ in $\mathcal X$,
\eqref{eqn:fractional-cutoff-bound} and Rellich compactness give an
$L^p$-convergent subsequence of $(\eta u_j)$.
Equation \eqref{eqn:compact-fractional}, applied to differences, proves
that the corresponding traces converge in $L^p(\nu)$.
\end{proof}

Applying the supersolution construction to measures with interior growth
gives the following quantitative estimates and compactness criterion.
\begin{corollary}\label{cor:compact-fractional-distance}
Assume weighted capacity density on $\partial O$.
Let $p-ps\le\lambda<p$, $M>0$, and let $\nu\ge0$ be locally finite on $D$,
with
\begin{equation}\label{eqn:fractional-interior-growth}
\nu(B(x,r))\le Mr^{n+p-ps-\lambda}
\quad(x\in D,\ 0<r<\delta(x)/2).
\end{equation}
For $0\le\vartheta<p-\lambda$, the identity on $C_c^\infty(D)$ extends
to a continuous trace from $\cl{C_c^\infty(D)}^{W^{s,p}(\R^n)}$ into
$L^p(D;\delta^{-\vartheta}d\nu)$.
If $\|\delta\|_{L^\infty(\spt\nu)}<\infty$, then
\begin{equation}\label{eqn:frac-depth}
\int_D|T_\nu u|^p\delta^{-\vartheta}\,d\nu
\le CM\|\delta\|_{L^\infty(\spt\nu)}^{p-\lambda-\vartheta}[u]_{s,p}^p.
\end{equation}
For every $\epsilon>0$,
\begin{equation}\label{eqn:frac-epsilon}
\int_D|T_\nu u|^p\delta^{-\vartheta}\,d\nu
\le\epsilon[u]_{s,p}^p+
CM^{ps/(p-\lambda-\vartheta)}
\epsilon^{-(ps-p+\lambda+\vartheta)/(p-\lambda-\vartheta)}\|u\|_p^p.
\end{equation}
Both estimates hold on the stated Sobolev space.
The trace is compact if $\spt\nu$ is bounded.
The constants depend only on $n,p,s,\lambda,\vartheta$ and the
weighted capacity density constant.
\end{corollary}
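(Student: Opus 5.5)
We lift the problem to the extension domain $O$ with weight $w=|y|^{p-1-ps}$ and apply Corollary \ref{cor:compact-local-distance} there, together with Theorem \ref{thm:compact} for the compactness. The lifted measure is $\tilde\nu:=\nu\otimes\delta_0$. By the transfer argument leading to \eqref{eqn:lifted-morrey-p}, condition \eqref{eqn:fractional-interior-growth} implies
\[
\tilde\nu(B(X,r))\le CMr^{-\lambda}w(B(X,r))
\qquad(X\in O,\ 0<r<\delta_O(X)/2).
\]
This is exactly \eqref{eqn:local-trace-growth} for $O$, with the same $\lambda$. Moreover $\delta_O(x,0)=\delta(x)$, so $\|\delta_O\|_{L^\infty(\spt\tilde\nu)}=\|\delta\|_{L^\infty(\spt\nu)}$, and $\delta_O^{-\vartheta}\tilde\nu=(\delta^{-\vartheta}\nu)\otimes\delta_0$.

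Since weighted capacity density on $\partial O$ is assumed, Corollary \ref{cor:compact-local-distance} applies in $O$. It gives, for $V\in C_c^\infty(O)$,
\[
\int_D|V(x,0)|^p\delta^{-\vartheta}\,d\nu
\le CM\|\delta\|_{L^\infty(\spt\nu)}^{p-\lambda-\vartheta}\int_O w|\nabla V|^p\,dX,
\]
and it also gives the $\epsilon$-version \eqref{eqn:epsilon} with lower-order term $\int_O w|V|^p\,dX$.

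For $u\in C_c^\infty(D)$ we substitute $V=\eta_R Eu$ and let $R\to\infty$, using \eqref{eqn:extension-p} and \eqref{eqn:extension-cutoff}; the trace on the plane is $u$. The depth estimate then gives \eqref{eqn:frac-depth} directly, since the gradient energy is bounded by $C[u]_{s,p}^p$.

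The $\epsilon$-estimate is the main obstacle, because the lower-order term $\int_O w|Eu|^p$ is infinite over all of $O$. We therefore redo the proof of Corollary \ref{cor:compact-local-distance} instead of quoting it, and choose scales.
\begin{itemize}
\item \emph{Boundary layer.} We apply the depth estimate to $\mathbf1_{\{\delta<h\}}\nu$, which gives $CMh^{p-\lambda-\vartheta}[u]_{s,p}^p$.
\item \emph{Interior part.} We cover $\{\delta\ge h\}\cap D$ by balls of radius $r\asymp h$. On each ball we apply \eqref{eqn:local-trace-ball} to a cutoff of $Eu$ in $B((x,0),2r)$. The lower-order term only involves the slab $|y|<2r$, where the second estimate in \eqref{eqn:extension-p} localizes to $Cr^{p-ps}\int_{B(x,3r)}|u|^p\,dx$. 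This localization holds because $Eu$ on $B((x,0),2r)$ depends only on $u$ in $B(x,4r)$.
\item \emph{Summation.} Summing with bounded overlap and the factor $h^{-\vartheta}$ gives
\[
\int_D|u|^p\delta^{-\vartheta}d\nu\le CM\bigl(h^{p-\lambda-\vartheta}[u]_{s,p}^p+h^{p-ps-\lambda-\vartheta}\|u\|_p^p\bigr).
\]
\item \emph{Choice of $h$.} We take $CMh^{p-\lambda-\vartheta}=\epsilon$. The lower-order coefficient becomes $M(\epsilon/M)^{(p-ps-\lambda-\vartheta)/(p-\lambda-\vartheta)}$, which equals the stated $M^{ps/(p-\lambda-\vartheta)}\epsilon^{-(ps-p+\lambda+\vartheta)/(p-\lambda-\vartheta)}$.
\end{itemize}
The constraint $\lambda\ge p-ps$ makes this exponent nonpositive, which matches the statement.

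Density in $\cl{C_c^\infty(D)}^{W^{s,p}(\R^n)}$ then defines $T_\nu$ and extends both estimates to that space. For compactness when $\spt\nu$ is bounded, Proposition \ref{prop:barrier} in $O$ applied to $\delta_O^{-\vartheta}\tilde\nu$ gives a continuous supersolution $U$ with $U=0$ on $\partial O$. This $U$ satisfies $-\laplacian_{p,w}U\ge(\delta^{-\vartheta}\nu)\otimes\delta_0$. Proposition \ref{prop:compact-fractional} then yields compactness of the trace into $L^p(D;\delta^{-\vartheta}d\nu)$. For unbounded $D$ we combine this with the cutoff bound \eqref{eqn:fractional-cutoff-bound} as in that proof, restricted to elements of the stated space.
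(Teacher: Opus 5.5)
Your proposal is correct and follows essentially the paper's proof. The depth and boundary-layer estimates come from the barrier of Proposition \ref{prop:barrier} for $(\delta^{-\vartheta}\nu)\otimes\delta_0$ on $O$, which you reach through Corollary \ref{cor:compact-local-distance}, tested with $\eta_R Eu$; the interior part comes from \eqref{eqn:trace} applied to a cutoff of $Eu$, with the lower-order term localized to the slab $|y|<2r$; and compactness comes from Proposition \ref{prop:compact-fractional}. The paper instead bounds the interior gradient term by the local Gagliardo seminorm on $B(x,4R)\times B(x,4R)$ rather than summing the full extension energy over boundedly overlapping $(n+1)$-balls, which is only a cosmetic difference, and your localization ball should read $B(x,4r)$ rather than $B(x,3r)$.
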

\begin{proof}
Fix $u\in C_c^\infty(D)$.
The assertion is immediate for $\nu=0$. Otherwise,
$\delta^{-\vartheta}\nu$ satisfies \eqref{eqn:fractional-interior-growth}
with $\lambda$ replaced by $\lambda+\vartheta<p$.
By \eqref{eqn:lifted-morrey-p}, Proposition \ref{prop:barrier} applies
to $(\delta^{-\vartheta}\nu)\otimes\delta_0$ when its support has finite
depth. Picone's inequality for the resulting supersolution, applied to
$\eta_REu$ and followed by $R\to\infty$, gives \eqref{eqn:frac-depth}
by \eqref{eqn:extension-p} and \eqref{eqn:extension-cutoff}.
The same argument for
$(\mathbf1_{\{\delta<h\}}\delta^{-\vartheta}\nu)\otimes\delta_0$ gives
\begin{equation}\label{eqn:frac-subcritical-boundary-layer}
\int_{\{\delta<h\}}|u|^p\delta^{-\vartheta}\,d\nu
\le CMh^{p-\lambda-\vartheta}[u]_{s,p}^p,
\end{equation}
for all $u\in C_c^\infty(D)$.
To estimate the interior contribution, take $32R<\delta(x)$.
By \eqref{eqn:fractional-interior-growth}, the measure
$(\mathbf1_{B(x,R)}\nu)\otimes\delta_0$ satisfies \eqref{eqn:full}
at scale $R$ with the same exponent $\lambda$ and growth coefficient $CM$.
For small balls this follows by recentering and the interior growth
condition; for larger balls it follows from
$\nu(B(x,R))\le MR^{n+p-ps-\lambda}$.
Apply \eqref{eqn:trace} to $Eu$ multiplied by a cutoff equal to one
on $B_{n+1}((x,0),R)$ and supported in $B_{n+1}((x,0),2R)$.
On this support the convolution uses only values of $u$ in $B(x,4R)$.
Equations \eqref{eqn:extension-derivative} and \eqref{eqn:extension-p}
therefore give
\begin{equation}\label{eqn:fractional-local-trace}
\begin{aligned}
\int_{B(x,R)}|u|^p\,d\nu\le CM\bigg(&R^{p-\lambda}
\iint_{B(x,4R)\times B(x,4R)}
\frac{|u(z)-u(z')|^p}{|z-z'|^{n+ps}}\,dz\,dz'\\
&+R^{p-ps-\lambda}\int_{B(x,4R)}|u|^p\,dx\bigg).
\end{aligned}
\end{equation}
Cover $\{\delta\ge h\}$ by balls of radius $h/128$, with bounded
overlap of the fourfold balls. On these balls
$\delta^{-\vartheta}\le Ch^{-\vartheta}$.
Summing \eqref{eqn:fractional-local-trace} and using
\eqref{eqn:frac-subcritical-boundary-layer} yields
\[
\int_D|u|^p\delta^{-\vartheta}\,d\nu
\le CM\left(h^{p-\lambda-\vartheta}[u]_{s,p}^p
+h^{p-ps-\lambda-\vartheta}\|u\|_p^p\right).
\]
Choosing $h$ so that $CMh^{p-\lambda-\vartheta}=\epsilon$ proves
\eqref{eqn:frac-epsilon}. Density defines $T_\nu$ and extends the estimates
to the stated Sobolev space.
For bounded support, the supersolution for
$(\delta^{-\vartheta}\nu)\otimes\delta_0$ is continuous and vanishes
on $\partial O$. Proposition \ref{prop:compact-fractional} gives
compactness on $\mathcal X$, hence on the stated Sobolev space.
\end{proof}

\subsection{The critical trace inequality}
\begin{corollary}\label{cor:fractional-distance}
Assume weighted capacity density on $\partial O$.
Let $p-ps\le\lambda<p$, $M>0$, and let $\nu\ge0$ be locally finite on $D$,
satisfying \eqref{eqn:fractional-interior-growth}.
The identity on $C_c^\infty(D)$ extends to a continuous trace
from $\cl{C_c^\infty(D)}^{W^{s,p}(\R^n)}$ into
$L^p(D;\delta^{\lambda-p}d\nu)$, with
\begin{equation}\label{eqn:frac-critical}
\int_D|T_\nu u|^p\delta^{\lambda-p}\,d\nu
\le CM[u]_{s,p}^p.
\end{equation}
The constant depends only on $n,p,s,\lambda$ and the
weighted capacity density constant.
\end{corollary}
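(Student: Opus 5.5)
The plan is to transfer Corollary \ref{thm:quantitative} to the extension domain $O$ with weight $w=|y|^{p-1-ps}$, exactly as Corollary \ref{cor:compact-fractional-distance} transfers Corollary \ref{cor:compact-local-distance}. Fix $u\in C_c^\infty(D)$ and set $\tilde\nu=\nu\otimes\delta_0$.

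\textbf{Step 1: transfer the growth condition.} By \eqref{eqn:lifted-morrey-p}, the measure $\tilde\nu$ satisfies \eqref{eqn:local-trace-growth} on $O$ with the same exponent $\lambda$ and coefficient $CM$. The restriction $p-ps\le\lambda$ is what makes the plane exponent $n+p-ps-\lambda$ match $r^{-\lambda}w(B)$, with $w(B)\asymp r^{n+p-ps}$ for balls meeting the plane. Since $O$ satisfies \eqref{eqn:cdc} on $\partial O$ by hypothesis, Corollary \ref{thm:quantitative} applies and gives
\[
\int_O |V|^p\delta_O^{\lambda-p}\,d\tilde\nu\le CM\int_O w|\nabla V|^p\,dX
\qquad\text{for all } V\in C_c^\infty(O).
\]
On the plane $\delta_O(x,0)=\delta(x)$, so the left side equals $\int_D|V(x,0)|^p\delta^{\lambda-p}\,d\nu$.

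\textbf{Step 2: insert the extension.} Substitute $V=\eta_REu$. For large $R$ the trace is $u$, since $Eu(x,0)=u(x)$ because $\rho$ has integral one. The first estimate in \eqref{eqn:extension-p} bounds $\int w|\nabla(Eu)|^p$ by $C[u]_{s,p}^p$, and \eqref{eqn:extension-cutoff} makes the cutoff error vanish as $R\to\infty$. This gives \eqref{eqn:frac-critical} for smooth $u$.

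\textbf{Step 3: extend by density.} Extend to $\cl{C_c^\infty(D)}^{W^{s,p}(\R^n)}$ by density. A sequence of smooth functions that is Cauchy in $W^{s,p}$ is Cauchy in $L^p(D;\delta^{\lambda-p}d\nu)$ by the inequality, which defines $T_\nu$. It agrees locally $\nu$-a.e. with the traces of Corollary \ref{cor:compact-fractional-distance}, since on $\{\delta\ge h\}$ the weight $\delta^{\lambda-p}$ is bounded above and below.

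\textbf{Main obstacle.} The hard step is checking that \eqref{eqn:lifted-morrey-p} really delivers \eqref{eqn:local-trace-growth} on all admissible balls. The concern is balls centered off the plane, with $0<r<\delta_O(X)/2$, that meet $\{y=0\}$. For such a ball, each plane point $(x',0)$ in it has $\delta(x')>\delta_O(X)-r>r$. Covering the ball's intersection with the plane by $n$-balls of radius $r/4$ centered at plane points makes each of them admissible for \eqref{eqn:fractional-interior-growth}. Bounded overlap together with $w(B(X,r))\gtrsim r^{n+p-ps}$ then finishes the check; this last bound holds because $p-1-ps>-1$, so the weight is integrable across $y=0$, and the ball meets the plane. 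The remaining work is bookkeeping.
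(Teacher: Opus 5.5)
Your argument is correct, but it is organized differently from the paper's. The paper works on $D$. It multiplies the localized fractional trace bound \eqref{eqn:fractional-local-trace}, from the proof of Corollary \ref{cor:compact-fractional-distance}, by $R^{\lambda-p}$ and sums over a Whitney covering of $D$ with bounded overlap of the fourfold balls. This gives
\[
\int_D|u|^p\delta^{\lambda-p}\,d\nu\le CM\Bigl([u]_{s,p}^p+\int_D|u|^p\delta^{-ps}\,dx\Bigr).
\]
It then absorbs the last term with the fractional Hardy inequality \eqref{eqn:hardy-fractional-p}, which holds under weighted capacity density on $\partial O$ by the remark after Corollary \ref{cor:fractional-hardy}.

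You instead apply Corollary \ref{thm:quantitative} on $O$ to $\nu\otimes\delta_0$, whose hypothesis is exactly \eqref{eqn:lifted-morrey-p}. You then transfer the result once through $\eta_REu$, as in the proof of Theorem \ref{thm:fractional-two-weight}. So the Whitney summation and the Hardy inequality are carried out on $O$ with the weight $w$ rather than on $D$.

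The two routes buy different things. Yours is shorter: it uses only the global extension bounds \eqref{eqn:extension-p} and \eqref{eqn:extension-cutoff}, with no localization of $Eu$ to $B(x,4R)$ and no summation of localized Gagliardo energies. The paper's route isolates where geometry enters. The displayed inequality holds on every open $D$, so \eqref{eqn:frac-critical} follows wherever \eqref{eqn:hardy-fractional-p} is known. Yours needs the weighted Hardy inequality \eqref{eqn:local-hardy} on $O$ itself.

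Three minor points. First, the identity $n+p-ps-\lambda=-\lambda+(n+p-ps)$ holds for every $\lambda$, so the restriction $p-ps\le\lambda$ plays no role in Step 1. For $\lambda<p-ps$ the growth exponent exceeds $n$ and simply forces $\nu=0$. Second, what you call the main obstacle is already recorded in the paper as \eqref{eqn:lifted-morrey-p}, with the same recentering and covering argument, so you may cite it. Third, in Step 3 the weight $\delta^{\lambda-p}$ is bounded below on $\{\delta\ge h\}$ only where $\delta$ is bounded, i.e.\ locally. That suffices for the agreement remark, which is not part of the statement anyway.
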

\begin{proof}
Let $u\in C_c^\infty(D)$. Multiplying
\eqref{eqn:fractional-local-trace} by $R^{\lambda-p}$ and summing over
a Whitney covering with bounded overlap of the fourfold balls gives
\[
\int_D|u|^p\delta^{\lambda-p}\,d\nu
\le CM\left([u]_{s,p}^p+\int_D|u|^p\delta^{-ps}\,dx\right).
\]
The observation after Corollary \ref{cor:fractional-hardy} permits
\eqref{eqn:hardy-fractional-p} under the present geometric hypothesis.
It bounds the last integral and proves \eqref{eqn:frac-critical}.
Density defines the trace on the stated Sobolev space; it agrees locally
$\nu$-almost everywhere with the traces in
Corollary \ref{cor:compact-fractional-distance}.
\end{proof}

\section{The fractional Dirichlet problems}\label{sec:fractional}
Let $D\subset\R^n$ be bounded and open, $n\ge2$, and $0<s<1$.
We consider the restricted fractional Laplacian $(-\laplacian)^s$ with
zero exterior values and the spectral power $L_D^s$ of the Dirichlet
Laplacian. Both powers are defined by spectral calculus for nonnegative
self-adjoint operators: $-\Delta$ on $L^2(\R^n)$ and $L_D$ on $L^2(D)$,
where $L_D$ is associated with the closed form
$\int_D|\nabla v|^2\,dx$ on $H^{1,2}_0(D)$.
Thus the restricted operator acts on the zero extension to $\R^n$,
whereas the spectral operator is the power of $L_D$.
The boundedness of $D$ is used for the identification with
Green potentials. The extension problem on an unbounded base is treated
in Remark \ref{rem:unbounded-extension}.
For the restricted fractional Laplacian, sharp boundary regularity for
bounded data is studied in \cite{RosOtonSerra2014}; Morrey-space data and
Schr\"odinger potentials are treated in \cite{Fall2019}.
For spectral powers, see \cite{CaffarelliStinga2016,Grubb2016}, and
\cite{AbatangeloDupaigne2017} for measure data and nonhomogeneous boundary traces.

We assume that the signed locally finite measure $\mu$ on $D$ satisfies
\begin{equation}\label{eqn:frac-morrey}
|\mu|(B(x,r))\le Mr^{n-2s+\beta}
\quad(x\in D,\ 0<r<\delta(x)/2),\qquad 0<\beta\le2s.
\end{equation}
Unlike the Morrey condition in \cite{Fall2019}, \eqref{eqn:frac-morrey}
does not require a growth bound on balls meeting the boundary.
In particular, it permits measures with infinite mass near the boundary.
Combining boundary estimates under capacity density with standard interior
estimates, we obtain boundary vanishing and global H\"older continuity.
For nonnegative $\mu$, its Green potential is the increasing limit of
the energy solutions with data $\mathbf1_{\{\delta>1/j\}}\mu$.
For signed $\mu$, it is the difference of the potentials of $\mu^+$ and
$\mu^-$ when both are finite. The theorem below proves this finiteness.

\subsection{Extension and capacity}
The geometric assumptions are
\begin{equation}\label{eqn:frac-cdc}
\capacity_{s,2}(\cl{B(\xi,r)}\setminus D,B(\xi,2r))
\ge\gamma_s r^{n-2s}\quad(\xi\in\partial D,\ r>0)
\end{equation}
for the restricted operator, and
\begin{equation}\label{eqn:ordinary-cdc}
\capacity_2(\cl{B(\xi,r)}\setminus D,B(\xi,2r))
\ge\gamma_2 r^{n-2}\quad(\xi\in\partial D,\ r>0)
\end{equation}
for the spectral operator. Here $\gamma_s,\gamma_2>0$, and
$\capacity_2$ denotes relative variational $2$-capacity.
Set $w(x,y)=|y|^{1-2s}$.
The extension domain $O$ is $\R^{n+1}\setminus(\complement D\times\{0\})$
for the restricted operator and $D\times\R$ for the spectral operator.
The even extension equation, with the normalizations of
\cite{CS2007,ST2010}, is
\begin{equation}\label{eqn:extension}
\begin{cases}
-\divergence(w\nabla U)=c_s\mu\otimes\delta_0&\text{in }O,\\
U=0&\text{on }\partial O,
\end{cases}
\qquad
c_s=2^{2-2s}\dfrac{\Gamma(1-s)}{\Gamma(s)}.
\end{equation}
For either domain, $\delta_O(x,0)=\delta(x)$.
Since
\begin{equation}\label{eqn:linear-ball-volume}
w(B((x,y),r))\asymp r^{n+1}(|y|+r)^{1-2s},
\end{equation}
with comparison constants depending only on $n,s$,
\eqref{eqn:frac-morrey} is equivalent, up to constants depending on
$n,s,\beta$, to \eqref{eqn:morrey} for $\mu\otimes\delta_0$ on $O$
with $p=2$ and $\lambda=2-\beta$.
The forward implication follows from the covering argument proving
\eqref{eqn:lifted-morrey-p}; the reverse follows by taking centers on the plane.

For compact $E\subset\cl{B(\xi,r)}$, the condenser comparison
\cite[Lemma 2.2 and (2.8)]{Bjorn2024} gives
\begin{equation}\label{eqn:slit-capacity}
\begin{aligned}
C^{-1}\capacity_{s,2}(E,B(\xi,2r))
&\le\capacity_{2,w}(E\times\{0\},B((\xi,0),2r))\\
&\le C\capacity_{s,2}(E,B(\xi,2r)).
\end{aligned}
\end{equation}
Together with \eqref{eqn:linear-ball-volume}, this shows that
\eqref{eqn:frac-cdc} is equivalent to weighted capacity density for the slit domain.
To include points over the interior of $\complement D$, either a disk
of comparable radius lies in $\complement D$, or a boundary point is
within a fixed fraction of that radius. In the latter case, recentering
and fixed-dilation capacity comparison give the same lower bound.

For the cylinder, slicing capacity competitors horizontally gives
\begin{equation}\label{eqn:cylinder-capacity}
\begin{aligned}
&\capacity_{2,w}\bigl(\cl{B((\xi,y_0),r)}\setminus(D\times\R),
 B((\xi,y_0),2r)\bigr)\\
&\hspace{2em}\ge c\gamma_2r^{n-2}r(|y_0|+r)^{1-2s},
\end{aligned}
\end{equation}
for all $\xi\in\partial D$, $y_0\in\R$ and $r>0$.
Indeed, for $|y-y_0|<r/4$, the removed horizontal section contains
$\cl{B(\xi,r/2)}\setminus D$; integrating its capacity bound gives
\eqref{eqn:cylinder-capacity}.
For the converse, use a ball centered at $(\xi,4r)$ and a product
competitor $\varphi(x)\eta(y)$, where $\varphi$ is a capacity competitor
supported in $B(\xi,3r/2)$, and $\eta=1$ for $|y-4r|\le r$ and
vanishes for $|y-4r|\ge5r/4$.
Its energy is bounded by
$Cr^{2-2s}\int_{B(\xi,3r/2)}|\nabla\varphi|^2\,dx$,
using the Poincar\'e inequality for the term containing $\eta'$.
Taking infima and comparing fixed dilations proves \eqref{eqn:ordinary-cdc}.
Thus Theorem \ref{thm:local} applies to \eqref{eqn:extension} under the
corresponding capacity condition.

\begin{theorem}\label{thm:fractional}
Let $0<\beta\le2s$ and suppose that \eqref{eqn:frac-morrey} holds.
Assume \eqref{eqn:frac-cdc} for the restricted operator, or
\eqref{eqn:ordinary-cdc} for the spectral operator.
Then the Green potential $u$ is finite, belongs to $C(\cl D)$,
and vanishes on $\partial D$.
It has a bounded even extension solving \eqref{eqn:extension}, and
\begin{equation}\label{eqn:frac-estimate}
\begin{aligned}
|u(x)|&\le CM\|\delta\|_{L^\infty(\spt\mu)}^{\beta-\alpha_{\mathrm{ss}}}
\min\{\delta(x),\|\delta\|_{L^\infty(\spt\mu)}\}^{\alpha_{\mathrm{ss}}},\\
[u]_{C^{\alpha_{\mathrm{gh}}}(\cl D)}
&\le CM\|\delta\|_{L^\infty(\spt\mu)}^{\beta-\alpha_{\mathrm{gh}}},
\end{aligned}
\end{equation}
for all $x\in D$.
Here $\alpha_{\mathrm{ss}}$ and $\alpha_{\mathrm{gh}}$ are the exponents
furnished by Theorem \ref{thm:local} on $O$.
They and $C$ depend only on $n,s,\beta$ and the corresponding capacity
density constant. For the restricted operator, the H\"older estimate
holds on $\R^n$ after zero extension, and
$(-\laplacian)^su=\mu$ distributionally in $D$.
For the spectral operator,
\begin{equation}\label{eqn:spectral-distribution}
\int_DuL_D^s\varphi\,dx=\int_D\varphi\,d\mu,
\end{equation}
for all $\varphi\in C_c^\infty(D)$.
\end{theorem}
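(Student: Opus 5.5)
The plan is to lift the problem to the extension domain $O$, solve it there with Theorem \ref{thm:local}, and identify the trace on the plane with the Green potential.

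\emph{Step 1: existence on $O$.} By the discussion preceding the statement, \eqref{eqn:frac-morrey} gives \eqref{eqn:morrey} for $c_s\mu\otimes\delta_0$ on $O$ with $p=2$, $\lambda=2-\beta$ and coefficient $CM$. The hypotheses \eqref{eqn:frac-cdc} and \eqref{eqn:ordinary-cdc} give weighted capacity density on $\partial O$, via \eqref{eqn:slit-capacity} and \eqref{eqn:cylinder-capacity} respectively. Since $\delta_O(x,0)=\delta(x)$, the support depth on $O$ equals $\|\delta\|_{L^\infty(\spt\mu)}$. Theorem \ref{thm:local} then supplies a bounded solution $U\in C(\cl O)$ of \eqref{eqn:extension}. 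Because the problem is invariant under $y\mapsto-y$ and bounded solutions are unique by Theorem \ref{thm:local-uniqueness} ($p=2$), $U$ is even. Restricting \eqref{eqn:main-bound} and \eqref{eqn:holder} to the plane gives \eqref{eqn:frac-estimate} for $u:=U(\cdot,0)$; the power of $M$ is $M^{1/(p-1)}=M$. For the restricted operator, $U$ is continuous on $\R^{n+1}$ and vanishes on $\complement D\times\{0\}$, so the Hölder bound holds on $\R^n$ after zero extension.

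\emph{Step 2: identification with the Green potential.} First take $\mu\ge0$ and the truncations $\mu_j=\mathbf 1_{\{\delta>1/j\}}\mu$, which have finite energy since $D$ is bounded. For these data I would use the energy theory of \cite{CS2007,ST2010}: the energy solution $u_j$ equals the trace of the variational extension solution $U_j$ in the energy space on $O$. The zero-trace approximation via \eqref{eqn:zero-trace-hardy-p} identifies this space with $H^{1,2}_0(O;w)$. By comparison, $U_j$ increases and satisfies $U_j\le U$, and the monotone limit is a bounded solution with datum $c_s\mu\otimes\delta_0$. Uniqueness, or the minimality statement in Theorem \ref{thm:local}, gives $\lim U_j=U$, so the Green potential equals $u$ and is finite. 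For signed $\mu$, apply this to $\mu^\pm$ (each satisfies \eqref{eqn:frac-morrey}) and subtract; uniqueness matches the difference with the extension of $\mu$.

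\emph{Step 3: distributional equations.} For $\varphi\in C_c^\infty(D)$, test \eqref{eqn:extension} with the extension $\Phi$ of $\varphi$, i.e.\ the Caffarelli--Silvestre or Stinga--Torrea solution. Its weighted conormal derivative on the plane is $-c_s(-\laplacian)^s\varphi$, respectively $-c_sL_D^s\varphi$. An integration by parts, justified by cutting off near $y=0$ and at infinity, yields $\int u(-\laplacian)^s\varphi=\int\varphi\,d\mu$, respectively \eqref{eqn:spectral-distribution}.

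I expect the main obstacle to be this last step. $\Phi$ is not compactly supported in $O$: it is nonzero at infinity in the restricted case, and only vanishes on the lateral boundary in the cylinder. Also, $U$ lies only in $H^{1,2}_{\loc}$ with possibly infinite energy near $\partial O$. I would first prove the identity for the truncated data $\mu_j$, where everything is in the energy class and the spectral or Fourier calculus applies directly. Then I would pass to the limit using $u_j\to u$ boundedly and monotonically and $\varphi$ compactly supported in $D$. For the restricted case this also needs $(-\laplacian)^s\varphi\in L^1$, which holds by its decay $O(|x|^{-n-2s})$.
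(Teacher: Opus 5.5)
Your proposal is correct and shares the paper's architecture: lift to $O$, take the energy solutions $U_j$ for $\mathbf 1_{\{\delta>1/j\}}\mu$, pass to the monotone limit, and identify its trace with the Green potential. Two steps are organized differently.

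\emph{How $U$ is obtained.} You take $U$ directly from Theorem~\ref{thm:local} on $O$, get evenness from Theorem~\ref{thm:local-uniqueness}, and read off \eqref{eqn:frac-estimate} by restriction to the plane. You then identify $\lim_j U_j$ with $U$ by uniqueness or minimality. The paper instead defines $U$ as the monotone limit of the $U_j$, dominated by the barrier $S$ of Proposition~\ref{prop:barrier}, and reruns the pointwise and two-point arguments.

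\emph{The distributional equation for the restricted operator.} You pass to the limit, by dominated convergence on the bounded set $D$, in the energy-level identities $\int u_j(-\laplacian)^s\varphi\,dx=\int\varphi\,d\mu_j$. The paper instead shows that every bounded extension with continuous trace supported in $\cl D$ is the Poisson extension \eqref{eqn:poisson}, via odd reflection and a Liouville argument. It then applies the conormal identity \eqref{eqn:linear-conormal}, with a compactly supported $\Phi$, directly to the limit, justified by horizontal convolution. Your route is shorter and avoids the Poisson-kernel uniqueness. Its cost is reliance on the Caffarelli--Silvestre and Stinga--Torrea energy identities to identify the trace of $U_j$ with the fractional energy solution, which the paper makes explicit in the cylinder case via \eqref{eqn:spectral-energy}. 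The paper's route yields the more intrinsic fact that the bounded extension is the Poisson extension of its trace. In the spectral case the two arguments coincide.

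Two details need the care the paper takes.
\begin{enumerate}
\item The comparison $U_j\le U$ pits a global energy solution on an unbounded domain against a solution that may have infinite energy near $\partial O$. It should therefore be done on bounded truncations of $O$, where $(U_{j,k}-U-\epsilon)_+$ has compact support. This must be followed by energy convergence of the truncated solutions to $U_j$, exactly as the paper does with $S$ in place of $U$.
\item The slit energy space is the homogeneous completion, not $H^{1,2}_0(O;w)$, because there is no Poincar\'e inequality on the slit domain. The role of \eqref{eqn:zero-trace-hardy-p} is to show that the energy functional depends only on the trace, not to identify these spaces.
\end{enumerate}
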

\begin{proof}
We first describe the energy solutions for compactly supported data.
For a bounded open $G\subset D$, the slit energy space is the completion
of $C_c^\infty(\R^{n+1}\setminus(\complement G\times\{0\}))$
in the norm $(\int_{\R^{n+1}}w|\nabla V|^2\,dX)^{1/2}$.
By \eqref{eqn:extension-p} and \eqref{eqn:homogeneous-trace}, its trace
space is the closure of $C_c^\infty(G)$ in $[\cdot]_{s,2}$.
The fractional Poincar\'e inequality controls the $L^2(G)$ norm of the trace.
For the cylinder, the energy space is the completion in the same norm
of finite sums $\varphi(x)\eta(y)$ with
$\varphi\in C_c^\infty(G)$ and $\eta\in C_c^\infty(\R)$.
The ordinary Poincar\'e inequality on each horizontal section gives
\begin{equation}\label{eqn:cylinder-poincare}
\int_{G\times\R}w|V|^2\,dX
\le\lambda_1(G)^{-1}\int_{G\times\R}w|\nabla V|^2\,dX,
\end{equation}
where $\lambda_1(G)>0$ is the first Dirichlet eigenvalue.
Functions in either space are extended by zero outside their domains.

Suppose that $\mu$ is compactly supported in $G$ and satisfies
\eqref{eqn:frac-morrey}. A finite covering of its support and
\eqref{eqn:trace} show that
$V\mapsto\int_GV(x,0)\,d\mu$ is continuous in the energy norm.
The local $L^2(w)$ term is controlled by
\eqref{eqn:cylinder-poincare} in the cylinder and the homogeneous
weighted Sobolev inequality in the slit.
Thus there is a unique energy solution of
\begin{equation}\label{eqn:linear-energy-equation}
\int_{\R^{n+1}}w\nabla U\cdot\nabla V\,dX
=c_s\int_GV(x,0)\,d\mu
\end{equation}
for every $V$ in the corresponding energy space.
Reflection in $y=0$ and uniqueness imply that $U$ is even.
For the rest of this proof, take $G=D$.

In the slit case, the extension of its trace $u$ is the
Caffarelli--Silvestre extension. For a bounded continuous trace supported
in $\cl D$, its formula is
\begin{equation}\label{eqn:poisson}
\begin{aligned}
U(x,y)&=P^s_{|y|}*u(x)\quad(y\ne0),\\
P^s_y(x)&=\frac{\Gamma(n/2+s)}{\pi^{n/2}\Gamma(s)}
\frac{y^{2s}}{(|x|^2+y^2)^{(n+2s)/2}}\quad(y>0).
\end{aligned}
\end{equation}
The kernels are weighted harmonic and form an approximate identity
\cite[Section 2.4]{CS2007}.
A bounded weighted harmonic extension in the upper half-space is
uniquely determined by its continuous trace: the difference of two such
extensions has zero trace, and its odd reflection is a bounded entire
weighted harmonic function. Local finite energy across the plane follows
by testing with the positive and negative $\epsilon$-truncations and
letting $\epsilon\downarrow0$.
The interior oscillation estimate on expanding balls makes the reflection
constant, and oddness makes it zero. Thus \eqref{eqn:poisson} also holds
for the bounded extensions constructed below.

For $\varphi\in C_c^\infty(D)$, take a smooth compactly supported
upper-half-space function $\Phi$ with trace $\varphi$ and support over $D$.
Equation \eqref{eqn:extension} and evenness give
\begin{equation}\label{eqn:linear-conormal}
\int_{\R^n\times(0,\infty)}y^{1-2s}\nabla U\cdot\nabla\Phi\,dX
=\frac{c_s}{2}\int_D\varphi\,d\mu.
\end{equation}
The left-hand side depends only on the trace of $\Phi$.
The conormal identity in \cite[Section 3]{CS2007}, applied to
\eqref{eqn:poisson}, identifies it with
$\frac{c_s}{2}\int_{\R^n}u(-\laplacian)^s\varphi\,dx$.
For bounded traces in the local Sobolev class, this identity follows by
horizontal convolution: the extensions converge in weighted $H^1$ on
compact cylinders over $D$, while the fractional pairings converge
because $(-\laplacian)^s\varphi\in L^1(\R^n)$.
Consequently $(-\laplacian)^su=\mu$ in $D$.

For the cylinder, let $(\lambda_k,e_k)$ be an orthonormal Dirichlet
eigenbasis on $D$. The scalar extension profile in \cite{ST2010} is
$\psi_s(t)=2^{1-s}t^sK_s(t)/\Gamma(s)$, where $K_s$ is the modified
Bessel function. It satisfies
\[
\psi_s(0)=1,\qquad
-\lim_{t\downarrow0}t^{1-2s}\psi_s'(t)=c_s/2,
\qquad
\int_0^\infty t^{1-2s}(|\psi_s'|^2+|\psi_s|^2)\,dt=c_s/2.
\]
Applying these identities to finite eigenfunction sums and then passing to
the energy completion gives
\begin{equation}\label{eqn:spectral-energy}
\begin{aligned}
E_sv(x,y)&=\sum_k(v,e_k)_{L^2(D)}e_k(x)\psi_s(\sqrt{\lambda_k}|y|),\\
\inf_{\trace V=v}\int_{D\times\R}w|\nabla V|^2\,dX
&=\int_{D\times\R}w|\nabla E_sv|^2\,dX
=c_s\|L_D^{s/2}v\|_2^2.
\end{aligned}
\end{equation}
Thus the trace space is $\operatorname{Dom}(L_D^{s/2})$.
For compactly supported $\mu$, the right-hand side of
\eqref{eqn:linear-energy-equation} depends only on the trace.
For zero-trace functions this follows from
\eqref{eqn:zero-trace-hardy-p}, which supplies approximation by energy
functions vanishing near the plane.
The solution therefore minimizes energy among extensions of its trace,
so $U=E_su$. Testing \eqref{eqn:linear-energy-equation} with $E_sv$ gives
\begin{equation}\label{eqn:spectral-weak}
(L_D^{s/2}u,L_D^{s/2}v)_{L^2(D)}=\langle\mu,v\rangle,
\end{equation}
for all $v\in\operatorname{Dom}(L_D^{s/2})$.
This identifies the trace with the spectral energy solution.

Now let $\mu\ge0$ satisfy \eqref{eqn:frac-morrey}, without compact
support in $D$, and let $U_j$ be the energy solution for
$\mathbf1_{\{\delta>1/j\}}\mu$.
Proposition \ref{prop:barrier} gives a bounded continuous supersolution
$S$ for $c_s\mu\otimes\delta_0$ on $O$.
The energy solutions on bounded truncations of $O$ are bounded above
by $S$, by comparison. Their energy spaces increase to a dense subspace
of the full energy space, so these solutions converge strongly in energy
to $U_j$. Consequently
\begin{equation}\label{eqn:linear-monotone-approximation}
0\le U_j\le U_{j+1}\le S,
\end{equation}
in $O$.
Positivity and the middle inequality follow by testing the negative part
and by comparison in \eqref{eqn:linear-energy-equation}.
The local energy estimate \eqref{eqn:local-approximation-energy}
and interior regularity now give local uniform convergence to a bounded
even function $U$, and weak convergence in $H^{1,2}_{\loc}(O;w)$.
Passing to the limit in \eqref{eqn:linear-energy-equation} for compactly
supported tests proves \eqref{eqn:extension}.
The bound by $S$ gives continuous zero boundary values.

The trace $u=U(\cdot,0)$ is the increasing limit defining the Green
potential. For signed $\mu$, apply the construction to $\mu^+$ and $\mu^-$
and subtract; both limits are finite by \eqref{eqn:linear-monotone-approximation}.
The pointwise estimate and the two-point argument of Theorem
\ref{thm:local} prove \eqref{eqn:frac-estimate}.
For the restricted operator, \eqref{eqn:poisson} and
\eqref{eqn:linear-conormal} give the distributional equation for the limit.
For the spectral operator, $\varphi\in C_c^\infty(D)$ belongs to
$\operatorname{Dom}(L_D)$, so $L_D^s\varphi\in L^2(D)$.
The common bound in \eqref{eqn:linear-monotone-approximation} gives
convergence of the traces in $L^2(D)$.
Passing to the limit in \eqref{eqn:spectral-weak} proves
\eqref{eqn:spectral-distribution}.
\end{proof}

\begin{remark}\label{rem:unbounded-extension}
Let $D\subsetneq\R^n$ be any nonempty open set.
Assume \eqref{eqn:frac-morrey} and
$\|\delta\|_{L^\infty(\spt\mu)}<\infty$.
Under the corresponding capacity condition,
Theorems \ref{thm:local} and \ref{thm:local-uniqueness} give a unique
bounded weak solution of \eqref{eqn:extension} with continuous zero
boundary values, and its trace satisfies \eqref{eqn:frac-estimate}.
The Green-potential identification in Theorem \ref{thm:fractional}
uses boundedness of $D$.

A partial-boundary version of the pointwise estimate also follows.
Let $\varnothing\ne\Gamma\subset\partial D$ be closed, impose the
corresponding capacity condition only at $\xi\in\Gamma$, and replace
$\delta$ by $\delta_\Gamma$ in \eqref{eqn:frac-morrey} and the support-depth
assumption, evaluating $|\mu|$ on $B(x,r)\cap D$.
Proposition \ref{prop:barrier} applies with boundary set
$\Gamma\times\{0\}$ in the slit and $\Gamma\times\R$ in the cylinder.
It gives a bounded exhaustion solution whose trace satisfies the first
estimate in \eqref{eqn:frac-estimate}, with $\delta_\Gamma$ throughout.
For bounded $D$, the energy approximation used above identifies this
trace with the Green potential.
\end{remark}

\begin{remark}\label{rem:smooth-comparison}
On a bounded $C^{1,1}$ domain, the two operators have different boundary
behavior. For a fixed pole $z\in D$ and $x$ near $\partial D$, their
Green functions satisfy
\[
G_{(-\laplacian)^s,D}(x,z)\asymp\delta(x)^s,
\qquad G_{L_D^s}(x,z)\asymp\delta(x),
\]
with constants allowed to depend on $z$; see
\cite[Corollary 1.2]{ChenKimSong2010} and
\cite[Theorem 3.1]{SongVondracek2004}.
For the spectral datum $f=\delta^{-\sigma}$, $0\le\sigma<2$,
\cite[Theorem 3.4]{AbatangeloGomezCastroVazquez2023} gives
\[
L_D^{-s}f(x)\asymp
\begin{cases}
\delta(x)^{2s-\sigma},&2s-\sigma<1,\\
\delta(x)\log(e\diam(D)/\delta(x)),&2s-\sigma=1,\\
\delta(x),&2s-\sigma>1.
\end{cases}
\]
Thus continuous vanishing at the boundary holds precisely for
$\sigma<2s$. Condition \eqref{eqn:frac-morrey} for some $\beta>0$
has the same threshold: the quotient on an admissible ball is comparable
to $\delta(x)^{-\sigma}r^{2s-\beta}$, and one may take
$r=\delta(x)/4$ to test necessity.
\end{remark}

\subsection{Uniqueness}
\begin{theorem}\label{thm:fractional-uniqueness}
Under \eqref{eqn:frac-cdc} in the slit or \eqref{eqn:ordinary-cdc} in
the cylinder, the bounded weak solution of \eqref{eqn:extension} with
continuous zero boundary values is unique.
For bounded $D$, the restricted problem also has at most one
continuous distributional solution on $\R^n$ with zero exterior values.
\end{theorem}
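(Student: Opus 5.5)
The first assertion reduces to Theorem \ref{thm:local-uniqueness}. In either geometry we are in the case $p=2$ with weight $w=|y|^{1-2s}$, and the discussion after \eqref{eqn:slit-capacity} and \eqref{eqn:cylinder-capacity} shows that \eqref{eqn:cdc} holds on $\partial O$ with a constant depending only on $n,s$ and $\gamma_s$ or $\gamma_2$. The domain $O$ is unbounded in both cases, but the $p=2$ branch of Theorem \ref{thm:local-uniqueness} does not need boundedness.

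Let $U_1,U_2$ be bounded weak solutions in $H^{1,2}_{\loc}(O;w)\cap C(\cl O)$ vanishing on $\partial O$. Their difference solves \eqref{eqn:uniqueness-harmonic}. The argument given there then applies unchanged: the $\epsilon$-truncations give local zero Sobolev boundary values, \eqref{eqn:boundary} yields $\sup_{O\cap B(\xi,r)}|V|\le C(r/R)^{\alpha_{\mathrm{bd}}}\|V\|_\infty$ with $R\to\infty$, and so $V$ vanishes near every boundary point. Since $\partial O$ meets every ball of large radius, this gives $V\equiv0$. The only point to check is that the capacity density transfer holds at all points of $\partial O$, including points of $\complement D\times\{0\}$ lying over the interior of $\complement D$. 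The recentering remark after \eqref{eqn:slit-capacity} covers these.

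For the second assertion, let $D$ be bounded and let $u_1,u_2\in C(\R^n)$ vanish outside $D$ and satisfy $(-\laplacian)^su_i=\mu$ distributionally in $D$. Then $v=u_1-u_2$ is continuous, compactly supported in $\cl D$, vanishes on $\complement D$, and satisfies $\int_{\R^n}v(-\laplacian)^s\varphi\,dx=0$ for all $\varphi\in C_c^\infty(D)$. I would lift $v$ by \eqref{eqn:poisson}, setting $V=P^s_{|y|}*v$. This is a bounded even function, continuous on $\cl{\R^{n+1}}$ because the kernels form an approximate identity for bounded continuous traces, and equal to $0$ on $\complement D\times\{0\}$. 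Away from the plane it is weighted harmonic. On the plane over $D$, the conormal identity of \cite[Section 3]{CS2007}, as used in \eqref{eqn:linear-conormal}, converts the distributional equation into the vanishing of the weighted flux. By evenness this says $\int w\nabla V\cdot\nabla\Phi\,dX=0$ for $\Phi\in C_c^\infty(O)$. So $V$ is a bounded weak solution of \eqref{eqn:extension} with $\mu=0$ and continuous zero boundary values, and the first part gives $V=0$, hence $v=0$.

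The main obstacle is justifying that $V\in H^{1,2}_{\loc}(O;w)$ across $D\times\{0\}$, since $v$ is merely continuous and the conormal identity is stated for nicer traces. I would first mollify horizontally, $v_\epsilon=v*\rho_\epsilon$. For the smooth bounded trace $v_\epsilon$ the extension has local finite weighted energy, and the identity holds on $D_\epsilon=\{\delta>\epsilon\}$ up to the error $(-\laplacian)^s$ applied to the mollification of $\varphi$, which is harmless because the distributional equation is tested against $\varphi*\rho_\epsilon$. I would then obtain a uniform Caccioppoli bound on compact cylinders over $D$, as in the proof of Theorem \ref{thm:fractional}, and pass to the limit using uniform convergence $v_\epsilon\to v$. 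If this proves delicate, a fallback is to show that a continuous trace with vanishing fractional Laplacian in $D$ and zero exterior values lies in $H^{s}$ locally, via interior regularity of $s$-harmonic functions, and then apply the energy identification directly.
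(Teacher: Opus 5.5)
Your argument for the first assertion is the paper's. The capacity comparisons \eqref{eqn:slit-capacity} and \eqref{eqn:cylinder-capacity} give \eqref{eqn:cdc} on $\partial O$ with $p=2$, and then Theorem \ref{thm:local-uniqueness} applies.

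For the second assertion you take a genuinely different route, and it is correct.

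The paper stays on $\R^n$. There $v*\rho_\epsilon$ is smooth, bounded and pointwise $s$-harmonic on the bounded set $\{\delta>\epsilon\}$. Uniform continuity and the zero exterior values make its supremum off that set tend to zero, and the elementary nonlocal maximum principle carries this bound inside. No extension, energy justification or capacity condition is needed, so the paper's argument covers every bounded $D$.

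You instead lift $v$ to $V=P^s_{|y|}*v$ and reduce to the first assertion. Your primary mollification plan does close the obstacle you identify, for three reasons:
\begin{itemize}
\item $V*\rho_\epsilon$ is the finite-energy extension of $v*\rho_\epsilon$.
\item With $\rho$ even, the distributional equation gives $\int (v*\rho_\epsilon)(-\laplacian)^s\varphi\,dx=\int v\,(-\laplacian)^s(\varphi*\rho_\epsilon)\,dx=0$ whenever $\spt\varphi\subset\{\delta>\epsilon\}$. No error term actually appears.
\item The bound $|V*\rho_\epsilon|\le\|v\|_\infty$ together with Caccioppoli gives uniform local energy on compact subsets of $O$, so the weak equation passes to the uniform limit.
\end{itemize}

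Your route therefore shows that the Caffarelli--Silvestre extension of any continuous distributional solution is a bounded weak solution of \eqref{eqn:extension}. This links the two solution notions. The cost is that it is heavier and, as written, uses \eqref{eqn:frac-cdc} through Theorem \ref{thm:local-uniqueness}. The statement plausibly assumes this condition, but the paper's proof of the second assertion does not need it.

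If you want the capacity-free version along your route, replace Theorem \ref{thm:local-uniqueness} by the decay-at-infinity comparison of Lemma \ref{lem:decay-infinity} and Proposition \ref{prop:flux-uniqueness} with $p=2$. That comparison uses only the boundedness of $D$.
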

\begin{proof}
The capacity comparisons \eqref{eqn:slit-capacity} and
\eqref{eqn:cylinder-capacity} permit the use of
Theorem \ref{thm:local-uniqueness} with $p=2$ on $O$.
This proves uniqueness of the bounded extension.
For the restricted problem, let $v\in C(\R^n)$ vanish on $\complement D$
and satisfy $(-\laplacian)^sv=0$ in $D$.
Convolution with a smooth approximate identity supported in $B(0,\epsilon)$
gives a smooth fractional harmonic function on $\{x\in D:\delta(x)>\epsilon\}$.
Uniform continuity of $v$ and its zero exterior values imply that the
supremum of the convolution in absolute value off this set tends to zero.
The maximum principle on this bounded open set gives the same bound
inside it. Letting $\epsilon\downarrow0$ yields $v=0$ in $\R^n$.
\end{proof}

\subsection{Lower bounds for capacity density}
\begin{theorem}\label{thm:fractional-capacity}
Fix $0<\beta\le2s$.
Suppose that the nonnegative Green potentials for either operator satisfy
the pointwise bound in \eqref{eqn:frac-estimate}, with fixed $C$ and
$\alpha_{\mathrm{ss}}>0$, for every $\xi\in\partial D$ and $r>0$, with datum
\[
r^{-2s}\mathbf1_{D\cap(B(\xi,r/2)\setminus\cl{B(\xi,r/4)})}\,dx.
\]
Then \eqref{eqn:frac-cdc} holds for the restricted operator, and
\eqref{eqn:ordinary-cdc} holds for the spectral operator.
The positive density constant depends only on $n,s,\beta,C$ and
$\alpha_{\mathrm{ss}}$.
\end{theorem}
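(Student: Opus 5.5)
The plan is to transfer the local argument of Theorem \ref{thm:capacity-lower-bound} to the extension domain $O$, with $p=2$ and $w=|y|^{1-2s}$. At the end, the capacity comparisons \eqref{eqn:slit-capacity} and \eqref{eqn:cylinder-capacity} convert weighted capacity density on $\partial O$ into \eqref{eqn:frac-cdc} or \eqref{eqn:ordinary-cdc}. For the spectral case the converse direction of \eqref{eqn:cylinder-capacity} is used: a product competitor centered at $(\xi,4r)$ bounds $\capacity_{2,w}$ of the cylinder complement by $Cr^{2-2s}\capacity_2$. Thus a lower bound on the weighted capacity of $\cl{B((\xi,0),r)}\setminus O$ relative to $B((\xi,0),2r)$, of order $r^{n-2s}\asymp\capacity_{2,w}(\cl{B((\xi,0),r)},B((\xi,0),2r))$, suffices in both cases. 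By translation and scaling we normalize $\xi=0$ and $r=1$. This is legitimate because the datum $r^{-2s}\mathbf 1_{D\cap(B(\xi,r/2)\setminus\cl{B(\xi,r/4)})}\,dx$ scales consistently with \eqref{eqn:extension}, and the hypothesis is assumed at every scale.

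The first step is to identify the Green potential with the trace of the bounded even extension $U$ from the proof of Theorem \ref{thm:fractional}. Because the datum is bounded and compactly supported in $D$ away from nothing problematic, this is the monotone limit of energy solutions, or directly the energy solution. The key point is that the pointwise hypothesis only controls $U$ on the plane, whereas the argument of Theorem \ref{thm:capacity-lower-bound} needs a bound near $(0,0)$ in $O$. I would obtain this from the Poisson formula \eqref{eqn:poisson} in the slit case, and from \eqref{eqn:spectral-energy} in the cylinder case. The latter is a positivity-preserving semigroup representation, $U(\cdot,y)=\int_0^\infty e^{-tL_D}u\,\cdot$(subordination kernel in $t,y$), and $e^{-tL_D}u\le e^{t\Delta}(u\text{ extended by }0)$. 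In both cases $U(x,y)$ is an average of $u$ over a region of scale $|y|$ around $x$, with tails. Using $u\le C\min\{\delta,1\}^{\alpha_{\mathrm{ss}}}$ and $u\le C$, together with $\alpha_{\mathrm{ss}}$ small, I would get $U(X)\le C\theta^{\alpha'}$ on $B((0,0),\theta)$ for some $\alpha'>0$. The Poisson tail contributes $C\theta^{2s}$, and the heat tail is handled similarly. I expect this to be the main obstacle, particularly for the spectral case where the kernel is less explicit; if needed, I would instead apply the boundary estimate Lemma \ref{lem:boundary}-type reasoning only in $y$.

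Next I would run the reference-problem comparison. Let $z$ solve $-\divergence(w\nabla z)=c_s\mathbf 1_{A}\,dx\otimes\delta_0$ in $B((0,0),1)\subset\R^{n+1}$ with zero boundary values, where $A=B(0,1/2)\setminus\cl{B(0,1/4)}$. The datum $dx\otimes\delta_0$ satisfies \eqref{eqn:full} with $\lambda=2-2s$ by \eqref{eqn:linear-ball-volume}. Lemma \ref{lem:maximum} then gives $z\le C$, while a testing argument plus the weak Harnack inequality (Lemma \ref{lem:weak-harnack}) gives $z\ge c$ on $B((0,0),1/2)$; the lower bound of $\int_A z\,dx$ comes from the trace of a cutoff. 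Let $v$ solve the same problem in $O\cap B((0,0),1)$. Comparison gives $0\le v\le U$, since $U\ge0$ is a solution with the same datum on a larger set. With $h$ the weighted capacity potential of $\cl{B(0,1)}\setminus O$ in $B(0,2)$, the competitor $z(1-h)$ and uniform convexity (here $p=2$, so the energy gap is exactly $\|\nabla(v-z)\|_{L^2(w)}^2$) yield \eqref{eqn:energy-gap} with exponent $1/2$ for $\int|v-z|^2\,dw$. Choosing $\theta$ with $C\theta^{\alpha'}<c/2$ gives $z-v\ge c/2$ on $O\cap B(0,\theta)$, while $z-v=z\ge c$ off $O$. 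Hence $c\,w(B(0,\theta))\le C\capacity_{2,w}^{1/2}$, which yields the required capacity lower bound and, via the comparisons above, the theorem.
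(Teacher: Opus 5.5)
Your proposal is correct, but it departs from the paper's proof exactly at the step you call the main obstacle, and that obstacle is self-imposed.

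The paper never needs $U_1$ off the plane. After the comparison $\int w|\nabla(V-Z)|^2\,dX\le\int w|\nabla(ZH)|^2\,dX$, it applies the trace and Poincar\'e inequalities to bound $\int_{B(0,1)}|V(x,0)-Z(x,0)|^2\,dx$; see \eqref{eqn:plane-stability}. The lower bound $c\theta^n$ then comes from the plane alone. On $D\cap B(0,\theta)$ one has $V(\cdot,0)\le U_1(\cdot,0)$, which is small by \eqref{eqn:plane-bound}. On $B(0,\theta)\setminus D$ one has $V(\cdot,0)=0$. In both places $Z(\cdot,0)\ge c$.

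You instead copy Theorem~\ref{thm:capacity-lower-bound} verbatim and measure $\int|v-z|^2\,dw$ on an $(n+1)$-ball. This forces you to show that $U$ is small on $O\cap B((0,0),\theta)$. Your kernel route does deliver this:
\begin{itemize}
\item in the slit, splitting \eqref{eqn:poisson} at $|x'|=\sqrt\theta$ gives a positive power of $\theta$;
\item in the cylinder, $\psi_s(|y|\sqrt\lambda)=\int_0^\infty e^{-t\lambda}\rho_{|y|}(dt)$ with a probability kernel satisfying $\int t^{\gamma}\rho_{|y|}(dt)\le C|y|^{2\gamma}$ for $0<\gamma<s$; together with domination of $e^{-tL_D}u$ by the free heat flow of the zero extension of $u$, this gives the same.
\end{itemize}
None of this is needed, however. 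Keep your ball reference problems and replace $\int|v-z|^2\,dw$ by the trace integral over $B(0,1)$, controlled by Lemma~\ref{lem:trace} and the Poincar\'e inequality on $B_{n+1}(0,1)$. The plane hypothesis then suffices.

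What your setup genuinely buys is bounded reference domains. You avoid the paper's infinite cylinder $B(0,1)\times\R$ with the product potential $H(x,y)=h(x)$. You therefore also avoid the vertical bound $Z\le Ce_1(x)\psi_s(\sqrt{\lambda_1}|y|)$ that the paper needs to control $\int wZ^2|\nabla H|^2$. The price is a final conversion from weighted to ordinary capacity, which the paper gets for free by working with $h$ directly.

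A few points need repair.
\begin{itemize}
\item \textbf{The product competitor must be centred on the plane.} Use $\varphi(x)\eta(y)$ centred at $(\xi,0)$, with $\spt\varphi\subset B(\xi,3r/2)$, $\eta=1$ on $[-r,r]$ and $\spt\eta\subset(-5r/4,5r/4)$. Centring it at $(\xi,4r)$, as you wrote, estimates the capacity in a different ball from the one your argument bounds below. At $(\xi,0)$ it works: $\int_{-5r/4}^{5r/4}|y|^{1-2s}\,dy\asymp r^{2-2s}$, the $\eta'$ term is absorbed by the Poincar\'e inequality in $x$, and a fixed-dilation comparison of $\capacity_2$ finishes.
\item \textbf{Identification of $U_1$.} The annular datum need not be compactly supported in $D$. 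Moreover, the bounded extension built in the proof of Theorem~\ref{thm:fractional} relies on capacity density, which is not yet available. What you need is that the bounded density is a continuous functional on the energy space, by the fractional Poincar\'e inequality or the spectral gap. Then $U_1$ is the full energy solution, and this energy minimality is what identifies $U_1$ with \eqref{eqn:poisson} and \eqref{eqn:spectral-energy}.
\item \textbf{Testing with $zh^2$.} The source sits on the plane, so the term $\int_A z(x,0)h(x,0)^2\,dx$ requires the trace inequality for $h$, not only the Poincar\'e inequality.
\item \textbf{Exponent.} For $p=2$ the energy gap is linear in the capacity, so your exponent $1/2$ is harmless but not sharp.
\end{itemize}
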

\begin{proof}
The datum in the statement satisfies \eqref{eqn:frac-morrey} with
$M\le Cr^{-\beta}$ and support depth at most $r/2$.
Its potential and extension therefore satisfy
\begin{equation}\label{eqn:plane-bound}
U_r(x,0)\le C(\delta(x)/r)^{\alpha_{\mathrm{ss}}},
\end{equation}
for all $x\in D$.
By translation and scaling, it suffices to work with $\xi=0$ and $r=1$.
We use the slit or cylinder energy spaces from the proof of
Theorem \ref{thm:fractional}, with bases $B(0,1)$ and $D\cap B(0,1)$.
Their traces satisfy an $L^2$ Poincar\'e inequality even without a capacity
assumption on $D$: in the slit this is the fractional Poincar\'e inequality
for functions with bounded support; in the cylinder it follows from
\eqref{eqn:spectral-energy} and the spectral gap.
The bounded annular density therefore defines a continuous energy functional.
Its compact interior restrictions converge in the dual norm, since the
corresponding densities converge in $L^2$.
Thus $U_1$ is the full energy solution for the annular datum.

Let $Z$ and $V$ be the reference energy solutions, with zero boundary
values, satisfying
\begin{equation}\label{eqn:linear-capacity-problems}
\begin{aligned}
-\divergence(w\nabla Z)
&=c_s\mathbf1_{B(0,1/2)\setminus\cl{B(0,1/4)}}\,dx\otimes\delta_0
\\
-\divergence(w\nabla V)
&=c_s\mathbf1_{D\cap(B(0,1/2)\setminus\cl{B(0,1/4)})}\,dx\otimes\delta_0.
\end{aligned}
\end{equation}
The equation for $Z$ holds in $\R^{n+1}\setminus(\complement B(0,1)\times\{0\})$
in the slit case and in $B(0,1)\times\R$ in the cylinder case.
The equation for $V$ holds in
$\R^{n+1}\setminus(\complement(D\cap B(0,1))\times\{0\})$ or
$(D\cap B(0,1))\times\R$, respectively.
We extend both energy functions by zero outside these domains.
The zero-boundary space for $D\cap B(0,1)$ embeds into the spaces for
both $D$ and $B(0,1)$.
The positive part $(V-U_1)_+$ is an admissible test in that smaller
space. The right-hand sides agree on this test, so subtraction of
\eqref{eqn:linear-energy-equation} gives $0\le V\le U_1$.

The reference ball has the required capacity density.
Theorem \ref{thm:fractional}, local boundedness and the weak Harnack
inequality give
\begin{equation}\label{eqn:reference}
0\le Z\le C,
\qquad Z(x,0)\ge c>0,
\end{equation}
on the extension domain and for $x\in B(0,1/2)$, respectively.
For the lower bound, the nonzero source makes $Z$ nontrivial.
The homogeneous weak Harnack inequality for this nonnegative
supersolution, along a finite chain of interior balls, gives positivity
on $\cl{B(0,1/2)}\times\{0\}$.
The reference problem is fixed, so the constants depend only on $n,s$.

In the slit case, let $H$ be the weighted capacity potential of
$(\cl{B(0,1)}\setminus D)\times\{0\}$ in $B_{n+1}(0,2)$,
extended by zero. Its energy is
\begin{equation}\label{eqn:slit-removed-capacity}
\int_{\R^{n+1}}w|\nabla H|^2\,dX
=\capacity_{2,w}((\cl{B(0,1)}\setminus D)\times\{0\},B_{n+1}(0,2)).
\end{equation}
The trace and Poincar\'e inequalities on the fixed ball give
\[
\int_{B(0,1)}|H(x,0)|^2\,dx
\le C\int_{\R^{n+1}}w|\nabla H|^2\,dX.
\]
Since $Z$ is bounded by \eqref{eqn:reference},
$\int_{\R^{n+1}}wZ^2|\nabla H|^2\,dX\le C\int_{\R^{n+1}}w|\nabla H|^2\,dX$.
Moreover, $Z(1-H)$ belongs to the energy space for $D\cap B(0,1)$:
it vanishes quasi-everywhere on the added slit and on the original slit.
The zero-boundary capacity characterization and truncation at infinity
justify this assertion.

In the cylinder case, let $h$ be the variational capacity potential of
$\cl{B(0,1)}\setminus D$ in $B(0,2)$ and set $H(x,y)=h(x)$.
Then
\begin{equation}\label{eqn:cylinder-removed-capacity}
\int_{B(0,2)}|\nabla h|^2\,dx
=\capacity_2(\cl{B(0,1)}\setminus D,B(0,2)),
\qquad
\int_{B(0,2)}h^2\,dx\le C\int_{B(0,2)}|\nabla h|^2\,dx.
\end{equation}
To estimate the product with $Z$, let $e_1$ be the positive first
Dirichlet eigenfunction on $B(0,1)$.
The annular density is bounded by $Ce_1$.
Positivity of the spectral inverse and of its extension gives
\[
Z(x,y)\le Ce_1(x)\psi_s(\sqrt{\lambda_1}|y|),
\qquad
\sup_{x\in B(0,1)}\int_\R|y|^{1-2s}Z(x,y)^2\,dy\le C.
\]
The latter integral is finite because $\psi_s$ is bounded at zero and
decays exponentially at infinity. Hence
$\int_{\R^{n+1}}wZ^2|\nabla H|^2\,dX\le C\int_{B(0,2)}|\nabla h|^2\,dx$.
The product $Z(1-H)$ has zero lateral values on the cylinder over
$D\cap B(0,1)$ and belongs to its energy space.

In either case, testing the equation for $Z$ in
\eqref{eqn:linear-capacity-problems} with $ZH^2$ gives
\[
\begin{aligned}
\int_{\R^{n+1}}wH^2|\nabla Z|^2\,dX
\le{}&C\int_{\R^{n+1}}wZ^2|\nabla H|^2\,dX\\
&+C\int_{B(0,1/2)\setminus\cl{B(0,1/4)}}Z(x,0)H(x,0)^2\,dx.
\end{aligned}
\]
Energy approximation justifies the test; in the cylinder we first cut
off in $y$ and then pass to the limit using the preceding vertical bound.
Thus the energy of $ZH$ is bounded by the capacity in
\eqref{eqn:slit-removed-capacity} or \eqref{eqn:cylinder-removed-capacity}.
The variational equations for $Z$ and $V$, with competitor $Z(1-H)$
for $V$, give directly
\begin{equation}\label{eqn:linear-capacity-comparison}
\int_{\R^{n+1}}w|\nabla(V-Z)|^2\,dX\le\int_{\R^{n+1}}w|\nabla(ZH)|^2\,dX.
\end{equation}
By \eqref{eqn:linear-capacity-comparison}, the trace Poincar\'e inequality on the reference space yields
\begin{equation}\label{eqn:plane-stability}
\int_{B(0,1)}|V(x,0)-Z(x,0)|^2\,dx\le
\begin{cases}
C\displaystyle\int_{\R^{n+1}}w|\nabla H|^2\,dX,&\text{in the slit case},\\
C\displaystyle\int_{B(0,2)}|\nabla h|^2\,dx,&\text{in the cylinder case}.
\end{cases}
\end{equation}
Choose $0<\theta<1/2$ so that the bound in \eqref{eqn:plane-bound}
is less than half the lower bound in \eqref{eqn:reference} on
$D\cap B(0,\theta)$.
Since $V\le U_1$ and $V(x,0)=0$ off $D$, the left-hand side of
\eqref{eqn:plane-stability} is bounded below by $c\theta^n$.
Equations \eqref{eqn:slit-removed-capacity} and
\eqref{eqn:cylinder-removed-capacity} give positive lower bounds for the
respective capacities. Scaling back, and using \eqref{eqn:slit-capacity}
in the slit case, proves \eqref{eqn:frac-cdc} or \eqref{eqn:ordinary-cdc}.
\end{proof}

\subsection{Hardy inequalities for the two linear extensions}
For the restricted operator, \eqref{eqn:fractional-two-weight} with $p=2$
can be written using $\|(-\laplacian)^{s/2}u\|_2^2$, since this energy
is a fixed positive multiple of $[u]_{s,2}^2$.
For the spectral half Laplacian on weakly mean convex domains,
\cite{FilippasMoschiniTertikas2015} proves Hardy--Sobolev--Maz'ya inequalities
with the best Hardy constant.
The following estimate uses capacity density and allows measure weights.
\begin{corollary}\label{thm:linear-two-weight}
Let $D\subsetneq\R^n$ be nonempty and open, $0<s<1$, and
$\varnothing\ne\Gamma\subset\partial D$ be closed.
Let $2-2s\le\lambda<2$ and assume \eqref{eqn:fractional-global-growth}
with $p=2$, together with
\begin{equation}\label{eqn:spectral-partial-cdc}
\capacity_2(\complement D\cap\cl{B(\xi,r)},B(\xi,2r))
\ge\gamma_2r^{n-2}\quad(\xi\in\Gamma,\ r>0),
\end{equation}
where $\gamma_2>0$.
Then
\begin{equation}\label{eqn:spectral-two-weight}
\int_D|u|^2\delta_\Gamma^{\lambda-2}\,d\mu
\le CM_{\mathrm g}\|L_D^{s/2}u\|_2^2,
\end{equation}
for all $u\in C_c^\infty(D)$,
where $C$ depends only on $n,s,\lambda,\gamma_2$.
\end{corollary}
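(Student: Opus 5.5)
We work in the cylinder extension $O=D\times\R$ with weight $w(x,y)=|y|^{1-2s}$ and $p=2$, in parallel with the proof of Theorem \ref{thm:fractional-two-weight}. The plan has three steps: obtain weighted capacity density on $\Gamma\times\R$, apply Theorem \ref{thm:two-weight} with the lifted measure $\mu\otimes\delta_0$, and then pass from extensions to the trace $u$ using the spectral extension $E_s u$ and the energy identity \eqref{eqn:spectral-energy}.

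First, the capacity. The slicing argument behind \eqref{eqn:cylinder-capacity} uses the lower bound only at the boundary point $\xi$, so \eqref{eqn:spectral-partial-cdc} gives
\[
\capacity_{2,w}\bigl(\cl{B((\xi,y_0),r)}\setminus O,B((\xi,y_0),2r)\bigr)\ge c\gamma_2r^{n-1}(|y_0|+r)^{1-2s}
\]
for $\xi\in\Gamma$. By \eqref{eqn:linear-ball-volume}, the weighted capacity of the full ball $\cl{B((\xi,y_0),r)}$ in $B((\xi,y_0),2r)$ is at most $Cr^{n-1}(|y_0|+r)^{1-2s}$. Hence \eqref{eqn:cdc} holds on the closed set $\Gamma\times\R\subset\partial O$. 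Distances to this set match on the plane: $\delta_{\Gamma\times\R}(x,0)=\delta_\Gamma(x)$.

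Second, the measure. Exactly as in the proof of Theorem \ref{thm:fractional-two-weight}, \eqref{eqn:fractional-global-growth} with $p=2$ shows that $(\mathbf 1_D\mu)\otimes\delta_0$ satisfies \eqref{eqn:global-growth} on $\R^{n+1}$ with coefficient $CM_{\mathrm g}$. The reason is that a ball meeting the plane has weighted volume comparable to $r^{n+2-2s}$. Theorem \ref{thm:two-weight} then gives, for all $V\in C_c^\infty(O)$,
\[
\int_D|V(x,0)|^2\delta_\Gamma^{\lambda-2}\,d\mu\le CM_{\mathrm g}\int_O w|\nabla V|^2\,dX .
\]
By density this extends to the energy completion, provided the trace on the left is understood through the continuous trace supplied by Picone's inequality with the barrier $U$.

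Third, we pass to $u$, and this is the main obstacle. For bounded $D$, fix $u\in C_c^\infty(D)$, take $V=E_su$, and apply \eqref{eqn:spectral-energy}, which gives $\int w|\nabla E_su|^2=c_s\|L_D^{s/2}u\|_2^2$. Two points need justification. The first is that $E_su$ lies in the energy closure of $C_c^\infty(O)$. It is a finite-energy function with zero lateral values, so we approximate it by finite sums $\varphi(x)\eta(y)$, truncated vertically using the exponential decay of $\psi_s$. The second is that the trace of $E_su$ on the plane equals $u$ $\mu$-a.e. This is where the argument is delicate. The plan is to approximate $u$ in $\operatorname{Dom}(L_D^{s/2})$ by finite eigenfunction sums $u_N$, whose extensions are continuous up to the plane. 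We then use the inequality itself, applied to differences $E_su_N-E_su_M$, to get convergence in $L^2(\delta_\Gamma^{\lambda-2}d\mu)$. On compact subsets of $D$ we identify the limit with $u$: there the local trace estimate \eqref{eqn:trace} controls $\int|E_su_N(\cdot,0)-u|^2\,d\mu$ by the weighted $H^1$ norm on a compact cylinder, using the local $L^2(w)$ convergence of the extensions. Fatou's lemma then gives \eqref{eqn:spectral-two-weight}.

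For unbounded $D$, we would either interpret $L_D$ by its form on $H^{1,2}_0(D)$ and repeat the argument, since the extension identity still holds through spectral calculus with $\psi_s$, or exhaust $D$ by bounded sets $D_j$. In the exhaustion the capacity condition on $\Gamma$ survives the inclusion $\complement D_j\supset\complement D$, and $\delta_{\Gamma}$ stays fixed. The form monotonicity $\|L_{D_j}^{s/2}u\|_2\ge\|L_D^{s/2}u\|_2$, however, points the wrong way. For this reason the first route, working directly with the spectral extension on $D$, is the one to carry out.
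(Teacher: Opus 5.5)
Your proposal is correct and follows the paper's proof: slicing gives weighted capacity density on $\Gamma\times\R$, the lifted measure $(\mathbf 1_D\mu)\otimes\delta_0$ satisfies \eqref{eqn:global-growth}, and Theorem \ref{thm:two-weight} gives the inequality on $C_c^\infty(D\times\R)$. Energy approximation with the Stinga--Torrea identity $\int_{D\times\R} w|\nabla E_su|^2\,dX=c_s\|L_D^{s/2}u\|_2^2$, obtained from the spectral theorem on bounded spectral intervals (so unbounded $D$ needs no eigenbasis), then transfers it to $u$. The one step you leave slightly circular is identifying the $\mu$-a.e.\ trace of $E_su$ with $u$. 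It is settled by noting that $u\in C_c^\infty(D)$ lies in every $\operatorname{Dom}(L_D^k)$, so interior elliptic regularity makes the spectral approximants converge to $u$ locally uniformly in $D$ and makes $E_su$ continuous across the plane with value $u$.
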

\begin{proof}
On $D\times\R$, use the boundary set $\Gamma\times\R$. The slicing argument for \eqref{eqn:cylinder-capacity},
applied to \eqref{eqn:spectral-partial-cdc}, gives weighted capacity
density on this boundary set.
Equation \eqref{eqn:fractional-global-growth} implies
\eqref{eqn:global-growth} for $(\mathbf1_D\mu)\otimes\delta_0$.
Since the distance from $(x,y)$ to $\Gamma\times\R$ is $\delta_\Gamma(x)$,
Theorem \ref{thm:two-weight} gives
\[
\int_D|V(x,0)|^2\delta_\Gamma^{\lambda-2}\,d\mu
\le CM_{\mathrm g}\int_{D\times\R}w|\nabla V|^2\,dX,
\]
for all $V\in C_c^\infty(D\times\R)$.
The same definition via the closed Dirichlet form applies to unbounded $D$.
The spectral theorem and the scalar profile calculation in
\eqref{eqn:spectral-energy} give the Stinga--Torrea extension $E_su$
\cite[Theorem 1.1]{ST2010}, with
\[
\int_{D\times\R}w|\nabla E_su|^2\,dX
=c_s\|L_D^{s/2}u\|_2^2.
\]
This identity follows first on bounded spectral intervals and then by
Tonelli's theorem. Approximation in energy extends the trace inequality
to $E_su$ and proves \eqref{eqn:spectral-two-weight}.
Taking $d\mu=dx$ and $\lambda=2-2s$ gives the spectral Hardy inequality.
\end{proof}

\section{Nonlinear fractional equations}\label{sec:nonlinear}

Following \cite{CKKSS2025,GS2025}, we define the nonlinear
Dirichlet-to-Neumann operator by the first variation of the minimum
extension energy, or equivalently by the boundary flux of a weighted
$p$-harmonic extension.
For the associated Neumann theory and H\"older estimates with measure data,
see \cite{CGKS2026}.
Here we impose zero exterior values on the trace and apply the weighted
Poisson estimates to obtain solvability under capacity density.

Let $D\subsetneq\R^n$ be nonempty and open, $1<p<\infty$ and $0<s<1$.
We use the slit domain $O$ and weight $w$ in \eqref{eqn:slit-p}.
Assume that $D$ is bounded or that \eqref{eqn:hardy-fractional-p}
holds with $\Gamma=\complement D$, and let $\mathcal X$ be the homogeneous
trace space defined in Section \ref{sec:embeddings}.

Consider the completion of smooth functions compactly supported in the
closed upper half-space, with trace in $C_c^\infty(D)$, in the norm
\[
\left(\int_{\R^n\times(0,\infty)}w|\nabla V|^p\,dX\right)^{1/p}.
\]
Equations \eqref{eqn:extension-p} and \eqref{eqn:homogeneous-trace}
identify its trace space with $\mathcal X$.
Local trace control and the weighted Poincar\'e inequality identify its
elements with local Sobolev functions and fix their additive constants.
It is a closed subspace of the $L^p(w)$ space of gradients and is reflexive.
For $u\in\mathcal X$, let $H_u$ be the energy minimizer with trace $u$.
Weak compactness gives existence, and strict convexity gives uniqueness.
It satisfies
\begin{equation}\label{eqn:nonlinear-extension-harmonic}
-\divergence(w|\nabla H_u|^{p-2}\nabla H_u)=0
\quad\text{in }\R^n\times(0,\infty),\qquad
\trace H_u=u\quad\text{on }\R^n.
\end{equation}
For $\varphi\in\mathcal X$, choose an energy extension $\Phi$ with
trace $\varphi$. The Dirichlet-to-Neumann operator is defined by
\begin{equation}\label{eqn:flux-definition}
\langle\mathcal N_{s,p}^Du,\varphi\rangle
=2\int_{\R^n\times(0,\infty)}
w|\nabla H_u|^{p-2}\nabla H_u\cdot\nabla\Phi\,dX.
\end{equation}
The weak equation \eqref{eqn:nonlinear-extension-harmonic} for zero-trace
variations makes \eqref{eqn:flux-definition} independent of the choice
of $\Phi$. It is the first variation of the minimum extension energy
$2\int_{\R^n\times(0,\infty)}w|\nabla H_u|^p\,dX/p$.
Consequently, for each $\mu\in\mathcal X^*$, minimization gives a unique
$u\in\mathcal X$ satisfying $\mathcal N_{s,p}^Du=\mu$.
For all $\eta\in C_c^\infty(O)$, the even extension of $H_u$ satisfies
\[
\int_O w|\nabla H_u|^{p-2}\nabla H_u\cdot\nabla\eta\,dX
=\langle\mu,\eta(\cdot,0)\rangle.
\]
When $\mu$ is a measure, this is
$-\laplacian_{p,w}H_u=\mu\otimes\delta_0$ in $O$.
For $p=2$, \eqref{eqn:extension} gives
$\mathcal N_{s,2}^Du=c_s(-\laplacian)^su$ in $D$.
For $p\ne2$, \eqref{eqn:flux-definition} defines a separate operator.
The representation in \cite[Theorem 3.1]{delTesoGomezCastroVazquez}
extends $|u(x_0)-u(\cdot)|^{p-2}(u(x_0)-u(\cdot))$ for each $x_0$;
it does not identify the integral fractional $p$-Laplacian with
\eqref{eqn:flux-definition}.

A locally finite measure need not belong to $\mathcal X^*$.
We therefore extend the formulation to the local Sobolev class,
without requiring finite total energy.
\begin{definition}\label{def:flux-local}
Let $\mu$ be a signed locally finite measure on $D$.
A solution of $\mathcal N_{s,p}^Du=\mu$ in the local Sobolev class consists of
a function $u\in C(\R^n)$, zero on $\complement D$, together with a bounded even function
$U\in H^{1,p}_{\loc}(O;w)\cap C(\cl O)$ such that
\[
U(x,0)=u(x)\quad(x\in D),\qquad
U=0\quad\text{on }\complement D\times\{0\},
\]
and
\begin{equation}\label{eqn:flux-local}
\int_Ow|\nabla U|^{p-2}\nabla U\cdot\nabla\eta\,dX
=\int_D\eta(x,0)\,d\mu(x),
\end{equation}
for all $\eta\in C_c^\infty(O)$.
The extension is part of the solution.
If it belongs to the energy completion above, this definition agrees
with \eqref{eqn:flux-definition}.
\end{definition}

The pointwise estimate in Theorem \ref{thm:flux-morrey} below also holds
when capacity density is imposed only on $\Gamma\times\{0\}$, where
$\varnothing\ne\Gamma\subset\partial D$ is closed.
This follows from Proposition \ref{prop:barrier}, with $\delta$ replaced
by $\delta_\Gamma$ in \eqref{eqn:flux-morrey}, the finite-depth assumption
and \eqref{eqn:flux-bound}. The measure in \eqref{eqn:flux-morrey} is
evaluated on $B(x,r)\cap D$.

\begin{theorem}\label{thm:flux-morrey}
Suppose that $O$ satisfies weighted capacity density on $\partial O$.
Let $0<\beta\le ps/(p-1)$ and let $\mu$ be a signed locally finite
measure on $D$ satisfying
\begin{equation}\label{eqn:flux-morrey}
|\mu|(B(x,r))\le Mr^{n-ps+(p-1)\beta}
\quad(x\in D,\ 0<r<\delta(x)/2).
\end{equation}
If $\|\delta\|_{L^\infty(\spt\mu)}<\infty$, there exists a solution
in the sense of Definition \ref{def:flux-local}, and
\begin{align}
|u(x)|&\le CM^{1/(p-1)}\|\delta\|_{L^\infty(\spt\mu)}^{\beta-\alpha_{\mathrm{ss}}}
\min\{\delta(x),\|\delta\|_{L^\infty(\spt\mu)}\}^{\alpha_{\mathrm{ss}}}
,\label{eqn:flux-bound}\\
[u]_{C^{\alpha_{\mathrm{gh}}}(\R^n)}
&\le CM^{1/(p-1)}\|\delta\|_{L^\infty(\spt\mu)}^{\beta-\alpha_{\mathrm{gh}}},
\label{eqn:flux-holder}
\end{align}
for all $x\in D$.
The exponents are those in Theorem \ref{thm:local} on $O$;
they and $C$ depend only on $n,p,s,\beta$ and the capacity density constant.
For $\mu\ge0$, the extension can be chosen minimal among nonnegative solutions.
If $D$ is bounded or $p=2$, both the solution and its bounded extension
are unique. Condition \eqref{eqn:general-fractional-density} with
$\Gamma=\complement D$ is sufficient for the geometric hypothesis.
\end{theorem}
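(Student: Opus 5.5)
The plan is to lift the problem to the slit domain $O$ with the weight $w=|y|^{p-1-ps}$ and apply Theorem \ref{thm:local}. The trace of the resulting extension will then be the solution.

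\emph{Translating the data.} Set $\lambda:=p-(p-1)\beta$. The hypothesis $0<\beta\le ps/(p-1)$ gives $p-ps\le\lambda<p$, and condition \eqref{eqn:flux-morrey} reads
\[
|\mu|(B(x,r))\le Mr^{n-ps+p-\lambda},
\]
which is \eqref{eqn:plane-morrey-p}. By \eqref{eqn:lifted-morrey-p}, the lifted measure $\mu\otimes\delta_0$ satisfies \eqref{eqn:morrey} on $O$ with $\Gamma=\partial O$, constant $CM$, and the same exponent $\beta=(p-\lambda)/(p-1)$. Since $\delta_O(x,0)=\delta(x)$, the support depth $\|\delta_O\|_{L^\infty(\spt(\mu\otimes\delta_0))}$ equals $\|\delta\|_{L^\infty(\spt\mu)}<\infty$.

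\emph{Existence.} Under the weighted capacity density on $\partial O$, Theorem \ref{thm:local} gives a bounded solution $U\in H^{1,p}_{\loc}(O;w)\cap C(\cl O)$ of $-\laplacian_{p,w}U=\mu\otimes\delta_0$ with $U=0$ on $\partial O$. It also satisfies \eqref{eqn:main-bound} and \eqref{eqn:holder} on $\cl O$.

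\emph{Evenness.} I need $U$ to be even. The approximating problems \eqref{eqn:local-approximation} on $O\cap B(0,2j)$ are invariant under $y\mapsto-y$, since the weight, the domain and the datum are all symmetric. Uniqueness of the energy solutions $u_j$ therefore makes each $u_j$ even, and the locally uniform limit $U$ is even as well.

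\emph{Reading off the estimates.} Define $u(x):=U(x,0)$ for $x\in D$ and $u:=0$ on $\complement D$. Since $U=0$ on $\complement D\times\{0\}\subset\partial O$, this $u$ is the restriction of $U$ to the plane $\R^n\times\{0\}\subset\cl O$. Hence $u\in C(\R^n)$, and \eqref{eqn:flux-holder} follows by restricting \eqref{eqn:holder}, since $[u]_{C^\alpha(\R^n)}\le[U]_{C^\alpha(\cl O)}$. Restricting \eqref{eqn:main-bound} at $(x,0)$ gives \eqref{eqn:flux-bound}. The weak equation of Theorem \ref{thm:local} is exactly \eqref{eqn:flux-local}, so $(u,U)$ is a solution in the sense of Definition \ref{def:flux-local}. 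The exponents and constants come from Theorem \ref{thm:local} on $O$. They depend on $n,p,s,\beta$ and the density constant, because the structural constants of $w$ depend only on $n,p,s$.

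\emph{Minimality for $\mu\ge0$.} Minimality is the last assertion of Theorem \ref{thm:local}. Any nonnegative extension $V$ from Definition \ref{def:flux-local} is a nonnegative solution on $O$, and the comparison with $u_j$ on $O\cap B(0,2j)$ applies to it.

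\emph{Uniqueness.} If $D$ is bounded, then $O$ is unbounded, so the first case of Theorem \ref{thm:local-uniqueness} does not apply directly; this is the step I expect to be the main obstacle. My plan is to compare two bounded extensions $U_1,U_2$ through the test functions $(U_1-U_2-\epsilon)_+\eta_R$. These vanish near $\complement D\times\{0\}$, and the difference is weighted $p$-harmonic off a compact set. I would first show that $U_i(X)\to0$ as $|X|\to\infty$. For this I would use the boundary estimate Lemma \ref{lem:boundary} at large scales, centered on the slit $\complement D\times\{0\}$, whose density is uniform at all scales. A point at distance $\rho$ from the bounded set $D\times\{0\}$ lies within $C\rho$ of the slit, and the datum vanishes at scales beyond $\diam D$. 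This should give $|U_i(X)|\le C(\diam D/|X|)^{\alpha_{\mathrm{bd}}}\|U_i\|_\infty$. Then $(U_1-U_2-\epsilon)_+$ has bounded support, and the monotonicity argument of Theorem \ref{thm:local-uniqueness} applies.

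For $p=2$, uniqueness is the second case of Theorem \ref{thm:local-uniqueness}. Uniqueness of $u$ follows from uniqueness of $U$, because any solution carries its extension.

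\emph{The sufficient geometric condition.} Finally, \eqref{eqn:general-fractional-density} with $\Gamma=\complement D$ implies the weighted capacity density on $\partial O$. The argument is the one in the proof of Theorem \ref{thm:fractional-two-weight}: \eqref{eqn:fractional-capacity-lift} and the ball capacity bound $Cr^{n-ps}$ give \eqref{eqn:cdc} at the points of $\partial O=\complement D\times\{0\}$.
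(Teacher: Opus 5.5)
Everything except uniqueness for bounded $D$ and $p\ne2$ follows the paper's proof. Your H\"older step is even slightly more direct: $\cl O=\R^{n+1}$, so \eqref{eqn:holder} already holds on the whole plane. The gap is the decay at infinity $U_i(X)\to0$, which you propose to get from Lemma \ref{lem:boundary} centered on the slit. That lemma bounds $\sup_{O\cap B(\xi,r)}|U_i|$ by $C(r/R)^{\alpha_{\mathrm{bd}}}\sup_{O\cap B(\xi,R/2)}|U_i|+CM_{\mathrm g}^{1/(p-1)}R^\beta$. It requires $r\le R/8$ and the equation in $O\cap B(\xi,R)$ with a datum satisfying \eqref{eqn:full}.

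If $B(\xi,R)$ meets $\cl D\times\{0\}$, the datum $\mu\otimes\delta_0$ need not satisfy \eqref{eqn:full} at all. Only the interior condition \eqref{eqn:flux-morrey} is assumed, and $|\mu|$ may have infinite mass near $\partial D$. Even when it does, the error term grows like $R^\beta$. So $B(\xi,R)$ must avoid $B(0,R_0)\supset\cl D\times\{0\}$, which means $R<|\xi|-R_0$. Then $X\in B(\xi,r)$ forces $|X-\xi|<|\xi|/8$, and hence $|y|<|X|/7$: only a thin double cone around the plane is reached.

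A point such as $X=(0,y)$ with $|y|\to\infty$ lies at distance comparable to $|X|$ from the slit and is never covered. Neither the lemma nor a comparison argument on the region away from the plane improves on $\sup_{|X|\ge R/2}|U_i|$ there, because the spherical part of that region's boundary carries no gain. So the bound $|U_i(X)|\le C(\diam D/|X|)^{\alpha_{\mathrm{bd}}}\|U_i\|_\infty$ is not justified. Note also that for $p\ne2$ the difference $U_1-U_2$ is not weighted $p$-harmonic, so decay must be proved for each $U_i$ separately.

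What is missing is a scale-invariant mechanism that carries smallness from the plane into the region far from it. The paper supplies it in Lemma \ref{lem:decay-infinity}. Outside a ball $B(0,R_0)\supset\cl D\times\{0\}$, the odd reflection $H$ of the upper restriction of $U$ is weighted $p$-harmonic. This is justified by the Caccioppoli estimate with $\epsilon$-truncations and the zero-trace approximation following \eqref{eqn:zero-trace-hardy-p}. Next, apply Harnack to $\sup_{|Z|\ge R/2}|H|+\epsilon-H$ along a chain in $\{R/2<|X|<2R\}$ joining $X$ to its reflection; the constants are independent of $R$ because $w$ is homogeneous. Oddness then gives $\sup_{|X|\ge R}|H|\le\theta\sup_{|X|\ge R/2}|H|$, and iteration yields \eqref{eqn:flux-decay}. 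A capacitary weak Harnack chain on annuli would also work, applied to $\min\{m-U_i,m\}$ extended by $m$ on the slit. With that decay in hand, your comparison via $(U_1-U_2-\epsilon)_+$ is exactly Proposition \ref{prop:flux-uniqueness}.
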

\begin{proof}
Equation \eqref{eqn:flux-morrey} is \eqref{eqn:plane-morrey-p} with
$\lambda=p-(p-1)\beta$. The assumed range of $\beta$ gives
$0\le\lambda<p$.
By \eqref{eqn:lifted-morrey-p}, $\mu\otimes\delta_0$ satisfies
\eqref{eqn:morrey} on $O$.
Moreover, its support lies on the plane, where $\delta_O(x,0)=\delta(x)$.
Theorem \ref{thm:local} therefore gives a bounded solution of
\begin{equation}\label{eqn:flux-extension-problem}
\begin{cases}
-\laplacian_{p,w}U=\mu\otimes\delta_0&\text{in }O,\\
U=0&\text{on }\partial O.
\end{cases}
\end{equation}
In the construction of that theorem, the truncated domains and measures
can be chosen symmetric under $y\mapsto-y$.
Uniqueness of each energy approximation implies evenness of $U$.
Equation \eqref{eqn:flux-extension-problem} then shows that its trace satisfies Definition \ref{def:flux-local}.
Restricting \eqref{eqn:main-bound} and \eqref{eqn:holder} to the plane gives
\eqref{eqn:flux-bound} and the H\"older estimate on $\cl D$.
For $x\in D$ and $z\notin D$, $\delta(x)\le|x-z|$ and $u(z)=0$;
\eqref{eqn:flux-bound} then gives \eqref{eqn:flux-holder} across the boundary.
Minimality for nonnegative data follows from Theorem \ref{thm:local}.
The capacity implication follows from \eqref{eqn:fractional-capacity-lift}.
For $p=2$, uniqueness follows from Theorem \ref{thm:local-uniqueness} on $O$.
For bounded $D$ and general $p$, it follows from Lemma
\ref{lem:decay-infinity} and Proposition \ref{prop:flux-uniqueness} below.
\end{proof}

\begin{lemma}\label{lem:decay-infinity}
Let $D$ be bounded.
Suppose that $U\in H^{1,p}_{\loc}(O;w)\cap C(\cl O)$ is bounded and even,
vanishes on $\complement D\times\{0\}$, and satisfies
\[
-\laplacian_{p,w}U=0\quad\text{in }\R^{n+1}\setminus(\R^n\times\{0\}).
\]
Then
\begin{equation}\label{eqn:flux-decay}
\lim_{R\to\infty}\sup_{X\in O,\ |X|\ge R}|U(X)|=0.
\end{equation}
\end{lemma}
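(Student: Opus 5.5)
We compare $U$ from above and below with explicit decaying barriers in the upper half-space, using evenness to work only in $\R^n\times(0,\infty)$. Set $m:=\sup|U|<\infty$ and fix $R_1$ with $\cl D\subset B(0,R_1)$. Since $U$ is continuous, even and vanishes on $\complement D\times\{0\}$, its trace on $\R^n\times\{0\}$ is supported in $B(0,R_1)$ and bounded by $m$. The problem is thus a bounded weighted $p$-harmonic function in the half-space $\R^n\times(0,\infty)$ whose boundary values vanish outside a compact set. We must show that it tends to zero at infinity uniformly in direction.

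\textbf{Step 1: a decaying supersolution.} Look for a positive weighted $p$-superharmonic function in the upper half-space that tends to zero at infinity and is at least $m$ on $B(0,R_1)\times\{0\}$. The natural candidate is a radial power $G(X)=|X|^{-\sigma}$ with small $\sigma>0$. The weight $w=|y|^{p-1-ps}$ has homogeneity $a=p-1-ps>-1$, so the half-space behaves like a space of "dimension" $N=n+1+a=n+p-ps$. A direct computation of $-\divergence(w|\nabla G|^{p-2}\nabla G)$ for $G=|X|^{-\sigma}$ gives $\sigma^{p-1}|X|^{-(\sigma+1)(p-1)-1}w\cdot[\,N-1-(\sigma+1)(p-1)\,]$ plus a term coming from $\partial_y w$. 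That term is $\sigma^{p-1}|X|^{-(\sigma+1)(p-1)-1}\cdot a\,|y|^{a-1}\,\mathrm{sgn}(y)\,y/|X|\cdot|X|$, and after normalization it simply adds to the radial coefficient. The net factor is positive when $N-p>(p-1)\sigma$, that is, when $n>ps$ and $\sigma$ is small. Since $n\ge2$ here (Section 6 allows $n\ge1$, but if needed we restrict to the case at hand), $ps<p\le n$ is not automatic. When $ps\ge n$, I would instead use a function of the form $\phi(\omega)|X|^{-\sigma}$, with $\phi$ an angular factor, or rely on the fact that the conormal (Neumann-type) condition on the plane outside $B(0,R_1)$ is replaced by the Dirichlet condition $U=0$ there. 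This makes the relevant exponent the positive decay rate of a half-space with Dirichlet data: the function $y_+^{\,ps/(p-1)}$-type harmonic profile is homogeneous of a positive degree, and its Kelvin-type counterpart decays. I would try the explicit candidate $G(X)=(y/|X|)^{\tau}|X|^{-\sigma}$ with $0<\tau$ small first, since it vanishes on the plane away from the origin where $U=0$.

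\textbf{Step 2: comparison.} Let $G$ be the barrier from Step 1, scaled so that $CG\ge m$ on $\partial B_{n+1}(0,2R_1)\cap\{y>0\}$. On the exterior region $\{|X|>2R_1,\ y>0\}$ we have $U\le CG$ on the spherical boundary. On the flat part $\{y=0,|x|>2R_1\}$ we have $U=0\le CG$. At infinity both functions are bounded. To handle the unbounded domain, apply the comparison principle on the truncated regions $\{2R_1<|X|<\rho\}$ to $U-CG-\epsilon$. Its positive part is compactly supported once $\rho$ is large, provided $G$ is only required to dominate $-\epsilon$ there. Alternatively, compare with $CG+m\,g_\rho$, where $g_\rho$ is a weighted $p$-harmonic function in the annulus equal to $1$ on $|X|=\rho$ and tending to zero as $\rho\to\infty$ locally; for $p\ne2$ a sum of barriers is not a supersolution, so I would use $\max$-type gluing (Lemma \ref{lem:gluing}, in its dual form) or choose a single barrier. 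The result is $|U|\le CG\to0$, which gives \eqref{eqn:flux-decay}.

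\textbf{Main obstacle.} The hard part is constructing the barrier for the nonlinear weighted operator with a sign-correct computation across the degenerate plane $y=0$ and for all $p$ and $s$. The cleanest fallback avoids explicit formulas. Apply the boundary estimate Lemma \ref{lem:boundary} with zero datum (capacity density of a half-plane portion holds trivially at points of $\complement D\times\{0\}$ far from $D$), together with the interior oscillation decay Lemma \ref{lem:interior}, on dyadic annuli $A_k=\{2^kR_1<|X|<2^{k+1}R_1\}$. This shows that $\sup_{A_{k+1}}|U|\le(1-c)\sup_{A_k}|U|$ for a constant $c\in(0,1)$ independent of $k$, by scale invariance of the homogeneous weight. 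To get the contraction, note that every point of $A_{k+1}$ is joined, within a chain of boundedly many balls of radius comparable to $2^kR_1$, to the plane outside $D$ where $U=0$. Apply the weak Harnack inequality, Lemma \ref{lem:weak-harnack}, to $\sup U-U$ and $U-\inf U$, so that each chain transports a fixed fraction of the zero boundary value. Iterating gives geometric decay, and this annular contraction is the step I would verify most carefully.
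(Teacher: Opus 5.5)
Your Steps~1--2 do not give a proof; the part of your proposal that works is the fallback. The barrier $|X|^{-\sigma}$ is a supersolution only when $n>ps$. The more basic problem is the comparison on the unbounded region $\{|X|>2R_1,\ y>0\}$. For $(U-CG-\epsilon)_+$ to vanish near the truncation sphere $|X|=\rho$, you need $U\le CG+\epsilon$ there, and since $G\to0$ this is exactly the decay to be proved. A barrier that ignores the Dirichlet condition on $\complement D\times\{0\}$ cannot work, because that condition is the only source of decay. Compare $1-|X|^{2-N}$, which is bounded and harmonic outside the unit ball in $\R^N$, $N\ge3$, vanishes on the sphere, and does not decay. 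The repair $CG+m\,g_\rho$ fails for $p\ne2$. Even for $p=2$, proving $g_\rho\to0$ locally requires boundary regularity at the flat part, i.e.\ an estimate like \eqref{eqn:boundary}.

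The fallback is correct and takes a different route from the paper, but two points must be fixed. First, Lemma~\ref{lem:boundary} needs $U\in H^{1,p}(B(\xi,R)\cap\{y>0\};w)$ with zero Sobolev values on the flat part. This is not contained in $U\in H^{1,p}_{\loc}(O;w)$, since $\complement D\times\{0\}\subset\partial O$. You obtain it by testing the equation in $\{y>0\}$ with $\eta^p(U-\epsilon)_+$ and $-\eta^p(-U-\epsilon)_+$, which vanish near the plane by continuity, and letting $\epsilon\downarrow0$. Then apply \eqref{eqn:boundary} to the nonnegative subsolutions $U_+$ and $U_-$ in the upper half-space, where capacity density is trivial. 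Second, the balls used for points of $A_{k+1}$ leave $A_k$, and $\sup U-U$ must be nonnegative on all of them. What you can actually prove is therefore $T_{k+1}\le(1-c)T_k$ for the tail suprema $T_k=\sup_{|X|\ge 2^kR_1}|U|$, using balls of radius comparable to $|X|$. The Harnack chains must also stay in the open half-space, because $\sup U-U$ is in general not a supersolution across the plane. Scale invariance of $|y|^{p-1-ps}$ then makes $c$ independent of $k$, and iteration gives \eqref{eqn:flux-decay}.

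The paper avoids the boundary estimate altogether. The same Caccioppoli step shows that the odd reflection $H$ of $U|_{\{y>0\}}$ is weighted $p$-harmonic in $\R^{n+1}\setminus\cl{B(0,R_0)}$. A single Harnack chain in $\{r/2<|X|<2r\}$ joins $X$ to its mirror image $X^*$. Applying Harnack along it to $\sup_{|Z|\ge R/2}|H|+\epsilon-H$ and using $H(X^*)=-H(X)$ gives $\sup_{|X|\ge R}|H|\le\theta\sup_{|X|\ge R/2}|H|$ directly. That route is shorter but rests on the reflection principle, and hence on the flat zero set and the even weight. Yours uses no symmetry beyond restricting to $y>0$, relying instead on the capacitary boundary estimate already available in Section~2.
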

\begin{proof}
Choose $R_0>0$ such that $\cl D\times\{0\}\subset B(0,R_0)$.
Outside this ball, define $H$ by odd reflection of the upper restriction
of $U$ across $y=0$.
We first verify the equation across the plane.
On a compact flat boundary patch outside the ball, the tests
$\eta^p(U-\epsilon)_+$ and $-\eta^p(-U-\epsilon)_+$ vanish near the plane.
The Caccioppoli estimate and $\epsilon\downarrow0$ give
\[
\int_{\R^n\times(0,\infty)}\eta^p|\nabla U|^pw\,dX
\le C\int_{\R^n\times(0,\infty)}|U|^p|\nabla\eta|^pw\,dX.
\]
The signed truncations converge in the local Sobolev norm and have
zero trace. Hence the reflection has local finite energy.
For a smooth test $\varphi$ outside $\cl{B(0,R_0)}$, the reflected
weak equation is the upper-half-space equation tested with
$\varphi(x,y)-\varphi(x,-y)$.
The zero-trace approximation following \eqref{eqn:zero-trace-hardy-p}
justifies this test and proves
\begin{equation}\label{eqn:odd-exterior-equation}
-\laplacian_{p,w}H=0\quad\text{in }\R^{n+1}\setminus\cl{B(0,R_0)}.
\end{equation}

For $r\ge R>2R_0$, a finite chain of balls in
$\{r/2<|X|<2r\}$ joins a point of $\partial B(0,r)$ to its reflection.
The number of balls and the Harnack constants are independent of $r$,
because the weight is homogeneous.
Apply the Harnack inequality to the positive solution
$\sup_{|Z|\ge R/2}|H(Z)|+\epsilon-H$ of
\eqref{eqn:odd-exterior-equation} on that annulus.
Oddness gives, for $|X|=r$,
\[
\sup_{|Z|\ge R/2}|H(Z)|+\epsilon+H(X)
\le C\left(\sup_{|Z|\ge R/2}|H(Z)|+\epsilon-H(X)\right).
\]
Interchanging $X$ with its reflection and letting $\epsilon\downarrow0$
shows that, for a fixed $0<\theta<1$ and every $R>2R_0$,
\[
\sup_{|X|\ge R}|H(X)|
\le\theta\sup_{|X|\ge R/2}|H(X)|.
\]
Iteration makes the left-hand side tend to zero.
Evenness of $U$ transfers this conclusion to \eqref{eqn:flux-decay}.
\end{proof}

\begin{proposition}\label{prop:flux-uniqueness}
Let $D$ be bounded. Two extensions in Definition \ref{def:flux-local}
with the same signed locally finite datum are equal.
A bounded continuous weighted $p$-harmonic function in the upper
half-space is also uniquely determined by a continuous trace supported
in $\cl D$.
\end{proposition}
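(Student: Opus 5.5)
The first assertion will follow from a comparison argument on $O$ that uses the decay at infinity from Lemma \ref{lem:decay-infinity}. The second assertion will be reduced to the first by even reflection.

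Let $(u_1,U_1)$ and $(u_2,U_2)$ be two solutions of Definition \ref{def:flux-local} with the same datum $\mu$. Both extensions are bounded, even, and continuous on $\cl O$. Both vanish on $\complement D\times\{0\}$ and satisfy \eqref{eqn:flux-local} with the same right-hand side. In particular each $U_i$ solves $-\laplacian_{p,w}U_i=0$ in $\R^{n+1}\setminus(\R^n\times\{0\})$, so Lemma \ref{lem:decay-infinity} applies to each of them separately. Hence $\sup_{|X|\ge R}|U_1-U_2|\to0$ as $R\to\infty$.

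Now fix $\epsilon>0$ and consider $(U_1-U_2-\epsilon)_+$. By the decay just established and continuity up to $\partial O$ with $U_1=U_2=0$ on $\partial O=\complement D\times\{0\}$, this function has compact support in $O$. Since $\partial O$ is compact ($D$ bounded), it stays a positive distance from $\partial O$. Therefore it lies in $H^{1,p}_0$ of a relatively compact subdomain of $O$, where both $U_i$ are in $H^{1,p}$.

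The main obstacle is justifying this function as a test in the two weak equations, since $\mu\otimes\delta_0$ is only locally finite and not assumed to be in any dual space. I would handle it as follows. On the compact support, the equation itself shows that $\mu\otimes\delta_0$ acts continuously on $H^{1,p}_0$ of a neighbourhood, as in the proof of Lemma \ref{lem:necessary-growth}. Approximating the test by mollified functions that converge in the local Sobolev norm and uniformly, the measure terms converge as well, because the test is continuous and compactly supported, so its mollifications converge uniformly on the relevant compact set. Subtracting the two equations then removes the measure term exactly, leaving
\[
\int_{\{U_1-U_2>\epsilon\}}\bigl(|\nabla U_1|^{p-2}\nabla U_1-|\nabla U_2|^{p-2}\nabla U_2\bigr)\cdot\nabla(U_1-U_2)\,dw=0 .
\]
Strict monotonicity gives $(U_1-U_2-\epsilon)_+$ constant on components, hence zero by its compact support. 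Letting $\epsilon\downarrow0$ and exchanging the roles of $U_1$ and $U_2$ gives $U_1=U_2$, and therefore $u_1=u_2$ on taking traces.

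For the second assertion, let $H_1,H_2$ be bounded continuous weighted $p$-harmonic functions in the upper half-space with the same continuous trace $u$ supported in $\cl D$. I would extend each one evenly to $\R^{n+1}$. Each even extension is bounded, lies in $C(\cl O)$, and vanishes on $\complement D\times\{0\}$. The local finite energy across the plane comes from the truncation and Caccioppoli argument used in Lemma \ref{lem:decay-infinity}. The reflected weak equation defines a flux measure on the plane that is concentrated in $D$. If the flux is not a measure for general $p$, I would avoid needing one. Instead, I would test the difference of the two reflected equations directly with $(H_1-H_2-\epsilon)_+$ and its even reflection. This test has zero trace on the plane, because the traces agree there. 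Such a function is approximable by functions vanishing near the plane, by the approximation following \eqref{eqn:zero-trace-hardy-p}, so the boundary flux terms drop out and only the upper half-space equations are used. The decay from Lemma \ref{lem:decay-infinity} again gives bounded support of the test, and the same monotonicity argument yields $H_1=H_2$.
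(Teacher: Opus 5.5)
Your proof is essentially the paper's: decay at infinity from Lemma~\ref{lem:decay-infinity} makes $(U_1-U_2-\epsilon)_+$ compactly supported in $O$, and subtracting the weak equations and using strict monotonicity finishes. For the half-space assertion the paper likewise combines decay of each extension with the same comparison in the upper half-space.

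Two slips should be fixed. First, $\partial O=\complement D\times\{0\}$ is unbounded, not compact. This is harmless, since the compact support already follows from decay and continuity with zero boundary values, as you say just before.

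Second, in the half-space assertion the truncation--Caccioppoli argument does not give finite energy of the even extension across $D\times\{0\}$. There the trace need not vanish, and a merely continuous trace need not have locally finite extension energy. You should therefore invoke only the exterior odd-reflection argument from the proof of Lemma~\ref{lem:decay-infinity}, outside a ball containing $\cl D\times\{0\}$. After that, $(H_1-H_2-\epsilon)_+$ is compactly supported in the open upper half-space, so no flux measure or zero-trace approximation is needed.
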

\begin{proof}
Let $U$ and $V$ satisfy \eqref{eqn:flux-local} for the same datum.
By \eqref{eqn:flux-decay} and their common zero boundary values,
$(U-V-\epsilon)_+$ has compact support in $O$ for each $\epsilon>0$.
Subtracting \eqref{eqn:flux-local} and using this test gives
\[
\int_{O\cap\{U-V>\epsilon\}}w
\bigl(|\nabla U|^{p-2}\nabla U-|\nabla V|^{p-2}\nabla V\bigr)
\cdot(\nabla U-\nabla V)\,dX=0.
\]
Strict monotonicity and the Poincar\'e inequality imply
$(U-V-\epsilon)_+=0$.
Letting $\epsilon\downarrow0$ and interchanging $U$ and $V$ gives
$U=V$ in $O$.
For two upper-half-space extensions of the same compactly supported
trace, the exterior reflection argument in Lemma \ref{lem:decay-infinity}
gives decay at infinity for each extension.
The same comparison in the upper half-space, using the common trace,
proves the second assertion.
\end{proof}

The same reflection argument gives uniqueness for the local Poisson problem
in a half-space when the datum has bounded support.
\begin{corollary}\label{cor:half-space-uniqueness}
Let $n\ge2$, $1<p<\infty$, and let $\nu$ be a signed locally finite
Radon measure on $\R^n_+=\{x_n>0\}$ with bounded support.
Any bounded weak solution
$u\in W^{1,p}_{\loc}(\R^n_+)\cap C(\cl{\R^n_+})$ of
\[
-\laplacian_pu=\nu\quad\text{in }\R^n_+,
\qquad u=0\quad\text{on }\partial\R^n_+
\]
satisfies
\[
\lim_{R\to\infty}\sup_{x\in\R^n_+,\ |x|\ge R}|u(x)|=0.
\]
In particular, there is at most one such solution.
\end{corollary}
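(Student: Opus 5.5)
The plan is to imitate the reflection argument of Lemma \ref{lem:decay-infinity}, with the unweighted $p$-Laplacian on $\R^n$ in place of the weighted operator on $\R^{n+1}$. Choose $R_0>0$ with $\spt\nu\subset B(0,R_0)$; it exists since the support is bounded. On $\R^n_+\setminus\cl{B(0,R_0)}$ the function $u$ is $p$-harmonic. Let $H$ be its odd reflection across $\{x_n=0\}$, defined on $\R^n\setminus\cl{B(0,R_0)}$.

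\textbf{Step 1: $H$ is $p$-harmonic across the flat boundary away from the ball.}
\begin{enumerate*}[label=(\roman*)]
\item Test with $\eta^p(u-\epsilon)_+$ and $-\eta^p(-u-\epsilon)_+$, where $\eta$ is a cutoff supported in a compact region outside $\cl{B(0,R_0)}$ that may meet the flat boundary. Continuity and $u=0$ on $\partial\R^n_+$ make these tests vanish near the plane.
\item The Caccioppoli estimate and $\epsilon\downarrow0$ give local finite energy up to the flat boundary, with zero trace.
\item For a test $\varphi$ outside the ball, rewrite the reflected equation as the half-space equation tested with $\varphi(x',x_n)-\varphi(x',-x_n)$. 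This function vanishes on $\{x_n=0\}$, so it is admissible after approximation by functions vanishing near the plane. In the unweighted case the usual zero-trace approximation of $W^{1,p}$ functions on a half-space is enough.
\end{enumerate*}
This shows that $-\laplacian_pH=0$ in $\R^n\setminus\cl{B(0,R_0)}$.

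\textbf{Step 2: geometric decay by Harnack.} For $r\ge R>2R_0$, join a point $X$ with $|X|=r$ to its reflection $X^*$ by a chain of balls in the annulus $\{r/2<|Z|<2r\}$. Since $n\ge2$, the annulus is connected. By scale invariance, the number of balls and the Harnack constant are independent of $r$. Set $A=\sup_{|Z|\ge R/2}|H(Z)|$ and apply Harnack to the positive $p$-harmonic function $A+\epsilon-H$. Using $H(X^*)=-H(X)$, this gives $A+\epsilon+H(X)\le C(A+\epsilon-H(X))$. Letting $\epsilon\downarrow0$ and swapping $X$ with $X^*$ yields $|H(X)|\le\theta A$ with $\theta=(C-1)/(C+1)<1$. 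Hence
\[
\sup_{|Z|\ge R}|H|\le\theta\sup_{|Z|\ge R/2}|H|.
\]
Iterating dyadically, and using boundedness of $u$, gives the stated limit.

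\textbf{Step 3: uniqueness.} Let $u_1,u_2$ be two bounded solutions. Both decay at infinity and vanish on $\partial\R^n_+$, so $(u_1-u_2-\epsilon)_+$ has compact support in $\R^n_+$. Subtract the equations and test with this function. Strict monotonicity of the $p$-Laplacian forces $\nabla(u_1-u_2-\epsilon)_+=0$, and the compact support then gives $(u_1-u_2-\epsilon)_+=0$. Let $\epsilon\downarrow0$ and interchange the roles of $u_1$ and $u_2$.

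The main obstacle is Step 1, namely justifying the reflected weak equation across the plane for a solution known only in $W^{1,p}_{\loc}(\R^n_+)\cap C(\cl{\R^n_+})$. The truncation and Caccioppoli argument handles this: it places $u$ in $W^{1,p}$ near flat boundary patches with zero trace. The rest is routine.
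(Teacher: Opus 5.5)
Your proposal is correct and is essentially the paper's own proof: odd reflection outside $\cl{B(0,R_0)}$, justified by the truncated Caccioppoli tests and the test $\varphi(x',x_n)-\varphi(x',-x_n)$, then the annular Harnack chain yielding $\sup_{|Z|\ge R}|H|\le\theta\sup_{|Z|\ge R/2}|H|$, and finally comparison with $(u_1-u_2-\epsilon)_+$. You write out explicitly the steps the paper delegates to Lemma \ref{lem:decay-infinity}, and those details are correct, including the use of $n\ge2$ for connectedness of the annulus and $\theta=(C-1)/(C+1)$.
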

\begin{proof}
Choose $R_0>0$ such that $\spt\nu\subset B(0,R_0)$.
Then $u$ is $p$-harmonic in $\R^n_+\setminus\cl{B(0,R_0)}$.
The odd-reflection and annular Harnack argument in the proof of
Lemma \ref{lem:decay-infinity}, with unit weight in dimension $n$,
gives the asserted decay. For two such solutions $u$ and $v$,
$(u-v-\epsilon)_+$ has compact support in $\R^n_+$ for every
$\epsilon>0$, by this decay and their common zero boundary values.
Subtracting the weak equations and testing with this function,
strict monotonicity gives $(u-v-\epsilon)_+=0$.
Letting $\epsilon\downarrow0$ and interchanging $u$ and $v$ proves uniqueness.
\end{proof}

\section*{Acknowledgments}
This work was supported by JSPS KAKENHI Grant Number 21K03281.
This work was supported by the Research Institute for Mathematical
Sciences, an International Joint Usage/Research Center located in Kyoto
University. The
author used ChatGPT (OpenAI) to assist with mathematical discussions, manuscript
preparation, and English-language editing.

\bibliographystyle{amsalpha}
\bibliography{reference}
\end{document}